\newif\ifarxiv
    \title{Coordinate Linear Variance Reduction \\ for Generalized Linear Programming}
    \author{
    Chaobing Song \thanks{Equal contribution} \thanks{Department of Computer Sciences, University of Wisconsin-Madison. E-mail:
    \href{mailto:chaobing.song@wisc.edu}{\texttt{chaobing.song@wisc.edu}}, 
    \href{mailto:cylin@cs.wisc.edu}{\texttt{cylin@cs.wisc.edu}}, 
    \href{mailto:swright@cs.wisc.edu}{\texttt{swright@cs.wisc.edu}}, 
    \href{mailto:jelena@cs.wisc.edu}{\texttt{jelena@cs.wisc.edu}}}
    \and
    Cheuk Yin Lin \footnotemark[1] \footnotemark[2]
    \and
    Stephen J. Wright \footnotemark[2]
    \and
    Jelena Diakonikolas \footnotemark[2]
    }
\definecolor{myblue}{rgb}{0,0.08,0.45}
\definecolor{mygreen}{rgb}{0,0.65,0.66}
\newcommand{\jd}[1]{{\color{mygreen}{\textbf{JD:} #1}}}
\newtheorem{theorem}{Theorem}
\newtheorem{lemma}{Lemma}
\newtheorem{remark}{Remark}
\newtheorem{proposition}{Proposition}
\newtheorem{assumption}{Assumption}
\newtheorem{definition}{Definition}
\def\1{\bm{1}}
\def\vzero{{\bm{0}}}
\def\vone{{\bm{1}}}
\def\vmu{{\bm{\mu}}}
\def\vxi{{\bm{\xi}}}
\def\vnu{{\bm{\nu}}}
\def\va{{\bm{a}}}
\def\vb{{\bm{b}}}
\def\vc{{\bm{c}}}
\def\vl{{\bm{l}}}
\def\vp{{\bm{p}}}
\def\vq{{\bm{q}}}
\def\vr{{\bm{r}}}
\def\vs{{\bm{s}}}
\def\vt{{\bm{t}}}
\def\vu{{\bm{u}}}
\def\vv{{\bm{v}}}
\def\vw{{\bm{w}}}
\def\vx{{\bm{x}}}
\def\vy{{\bm{y}}}
\def\vz{{\bm{z}}}
\newcommand{\dd}{\mathrm{d}}
\def\vxt{\tilde{\bm{x}}}
\def\vyt{\tilde{\bm{y}}}
\def\vwt{\tilde{\bm{w}}}
\def\vxh{\hat{\bm{x}}}
\def\vyh{\hat{\bm{y}}}
\def\vwh{\hat{\bm{w}}}
\def\mA{{\bm{A}}}
\def\mF{{\bm{F}}}
\def\mW{{\bm{W}}}
\DeclareMathAlphabet{\mathsfit}{\encodingdefault}{\sfdefault}{m}{sl}
\SetMathAlphabet{\mathsfit}{bold}{\encodingdefault}{\sfdefault}{bx}{n}
\def\gC{{\mathcal{C}}}
\def\gF{{\mathcal{F}}}
\def\gL{{\mathcal{L}}}
\def\gP{{\mathcal{P}}}
\def\gW{{\mathcal{W}}}
\def\gX{{\mathcal{X}}}
\def\sP{{\mathbb{P}}}
\def\sQ{{\mathbb{Q}}}
\def\sR{{\mathbb{R}}}
\def\sZ{{\mathbb{Z}}}
\newcommand{\E}{\mathbb{E}}
\newcommand{\R}{\mathbb{R}}
\DeclareMathOperator*{\argmin}{arg\,min}
\DeclareMathOperator*{\prox}{prox}
\DeclareMathOperator*{\dom}{dom}
\DeclareMathOperator{\st}{s.t.}
\newcommand{\innp}[1]{\left\langle #1 \right\rangle}
\newcommand{\vrpda}{\textsc{vrpda}$^2$}
\newcommand{\clvr}{\textsc{clvr}}
    \title{Coordinate Linear Variance Reduction\\ for Generalized Linear Programming}
    \author{
        Chaobing Song\thanks{Equal contribution} \\
        University of Wisconsin-Madison \\
        \texttt{chaobing.song@wisc.edu}
        \And
        Cheuk Yin Lin\footnotemark[1] \\
        University of Wisconsin-Madison \\
        \texttt{cylin@cs.wisc.edu}
        \AND
        Stephen J. Wright \\
        University of Wisconsin-Madison \\
        \texttt{swright@cs.wisc.edu}
        \And
        Jelena Diakonikolas \\
        University of Wisconsin-Madison \\
        \texttt{jelena@cs.wisc.edu}
    }
\begin{document}

\maketitle

\begin{abstract}
    We study a class of generalized linear programs (GLP) in a large-scale setting, which includes a simple, possibly nonsmooth convex regularizer and simple convex set constraints. By reformulating (GLP) as an equivalent convex-concave min-max problem, we show that the linear structure in the problem can be used to design an efficient, scalable first-order algorithm, to which we give the name \emph{Coordinate Linear Variance Reduction} (\textsc{clvr}; pronounced ``clever''). 
    % \textsc{clvr} is an incremental coordinate method with implicit variance reduction that outputs an \emph{affine combination} of the dual variable iterates rather than convex combination. 
    \textsc{clvr} yields improved complexity results for (GLP) that depend on the max row norm of the linear constraint matrix in (GLP) rather than the spectral norm. When the regularization terms and constraints are separable, \textsc{clvr} admits an efficient lazy update strategy that makes its complexity bounds scale with the number of nonzero elements of the linear constraint matrix in (GLP) rather than the matrix dimensions. Further, for the special case of linear programs and by exploiting sharpness, we propose a restart scheme for \textsc{clvr} to obtain empirical linear convergence. Finally, we show that Distributionally Robust Optimization (DRO) problems with ambiguity sets based on both $f$-divergence and Wasserstein metrics can be reformulated as (GLPs) by introducing sparsely connected auxiliary variables. We complement our theoretical guarantees with numerical experiments that verify our algorithm's practical effectiveness in terms of wall-clock time and number of data passes.
\end{abstract}

\section{Introduction}\label{sec:intro}
We study the following generalized linear program (GLP):
\begin{equation}\tag{GLP}\label{eq:glp}
\begin{aligned}
 \min_{\vx}\; &\; \big\{ \vc^T\vx + r(\vx):  % \\ 
%\st \; &\; 
\mA\vx = \vb, \;  \vx \in \gX\big\}, 
\end{aligned}   
\end{equation}
where $\vx,\vc\in\sR^d,$ $\mA\in\sR^{n\times d}$, $\vb\in\sR^n$, $r:\sR^{d}\rightarrow \sR$ is a convex regularizer, and $\gX\subseteq \sR^d$ is a closed convex set, such that a proximal/projection operator involving $r$ and $\gX$ can be computed efficiently.
% that admits an efficiently computable proximal operator, and $\gX\subseteq \sR^d$ is a closed convex set with efficiently computable projection operator. 
When $\gX$ is the nonnegative orthant $\{\vx: x_i \ge 0, i\in[d]\}$ and $r\equiv 0$, \eqref{eq:glp} reduces to the standard form of a linear program (LP). 
% \cite{vanderbei2020linear}. 
When $\gX$ is a convex cone and  $r \equiv 0$, \eqref{eq:glp} reduces to a conic linear program.
% \cite{luenberger2016conic}. 
\eqref{eq:glp} is an important paradigm in traditional engineering disciplines such as transportation, energy, telecommunications, and manufacturing. 
In modern data science, we note the renaissance of \eqref{eq:glp} due to its modeling power in such areas as reinforcement learning~\cite{de2003linear}, optimal transport~\cite{villani2009optimal}, and neural network verification~\cite{liu2020algorithms}. 
%While problems in traditional engineering disciplines with moderate size or exploitable sparsity are often solved with interior point methods using off-the-shelf solvers \cite{gurobi2020reference}, the extreme-scale and density of problems arising from modern data science applications can prohibit the computationally expensive matrix factorization steps required during each iteration of interior point methods. 
% \el{The following two sentences don't flow very well -- The emphasis should be more on that IPM doesn't work very well on large-scale problems. I suggest rewording to ""}
% {\color{gray} 
For traditional engineering disciplines with  moderate scale or exploitable sparsity, off-the-shelf interior point methods that form and factorize matrices in each iteration are often good choices as practical solvers~\cite{gurobi}. 
In data science applications, however, where the data are often dense or of extreme scale, the amount of computation and/or memory required by matrix factorization is prohibitive. %may be excessive. 
%}
Thus, first-order methods that avoid matrix factorizations are potentially appealing options.
In this context, because the presence of the linear equality constraint in \eqref{eq:glp} may complicate projection operations onto the feasible set, we consider an equivalent reformulation of \eqref{eq:glp} as a min-max problem involving the Lagrangian:
\begin{align}\tag{PD-GLP}\label{eq:pd-glp}
\min_{\vx\in\gX\subset\sR^d} \, \max_{\vy\in\sR^n} \, \Big\{ \gL(\vx,\vy) := \vc^T\vx + r(\vx)   + \vy^T\mA\vx  - \vy^T\vb\Big\}. 
\end{align}
In data science applications, both $n$ and $d$ can be  very large. 
\eqref{eq:pd-glp} can be viewed as a structured bilinearly coupled %convex-concave 
min-max 
problem,
% \footnote{For general bilinear convex-concave min-max problems, the potential function,
% \jd{It's unclear what you mean by ``the potential function'' here.} w.r.t.~$\vy$ is a general convex function with efficiently computable proximal operator and $\vy$ belongs to a general convex set with efficiently computable projection operator.}   
where the linearity of $\gL(\vx,\vy)$ in the dual variable vector $\vy$ is vital to our algorithmic development. 

% the dual variable vector  $\vy$ has the following linear structure: it belongs to the whole Euclidean space and the ``potential function'' $ - \vb^T\vy$ w.r.t.~$\vy$ is linear, which turns out to be vital to our algorithmic development. \sjw{Can you just say "where " ?}

\subsection{Background}
%There has been little for on developing algorithms for the formulation 
While there have been few papers that directly address~\eqref{eq:pd-glp} --- some special cases have been considered in~{\cite{mangasarian1979nonlinear,mangasarian1984normal,mangasarian2004newton,gao2017first,xu2020primal,zhu2020linear,zhao2022iteration,carmon2020coordinate}} ---
% \sjw{Some very old work of Mangasarian defined $r(x) = \epsilon x^Tx$ for standard least squares, to find a minimum-norm solution, but this was not algorithmic.} \cb{Do you have some concrete references? }
% \sjw{Here: \cite{mangasarian1979nonlinear,mangasarian1984normal}. Also \cite[(6)]{mangasarian2004newton} is interesting because it describes a Newton method based on applying the regularization term mentioned above to the dual.}
%Nevertheless, 
there has been significant recent work on  first-order methods for general bilinearly coupled convex-concave min-max problems. 
Deterministic first-order methods include the {proximal point method} (PPM)~\cite{rockafellar1976monotone}, the extragradient/mirror-prox method (EGM)~\cite{korpelevich1976extragradient,nemirovski2004prox}, the primal-dual hybrid gradient (PDHG) method~\cite{chambolle2011first}, and the alternating direction method of multipliers (ADMM)~\cite{douglas1956numerical}. All these methods have per-iteration cost $\Theta(\mathrm{nnz}(\mA))$ and convergence rate $1/k$, where $\mathrm{nnz}(\mA)$ denotes the number of nonzero elements of $\mA$ and $k$ is the number of iterations. 

For better scalability, stochastic counterparts of these methods have been proposed. \cite{juditsky2011solving, ouyang2013stochastic,bianchi2016ergodic,patrascu2017nonasymptotic} have used ``vanilla'' stochastic gradients to replace the full gradients of their deterministic counterparts.
% yielding per-iteration costs of either $O(n+d)$ or $O(d)$\footnote{The per-iteration cost depends on the separability of the dual variables.} and convergence rate $O(1/\sqrt{k})$. 
\cite{carmon2019variance, hamedani2020stochastic, alacaoglu2021stochastic} have exploited the finite-sum structure of the interaction term $\langle\vy, \mA\vx\rangle$ involving both primal and dual variables to perform variance reduction. 
% and obtain a convergence rate $O(\sqrt{n}/k)$ and per-iteration cost $O(n+d)$. 
With a separability assumption for the dual variables, \cite{alacaoglu2017smooth} and \cite{chambolle2018stochastic} have combined incremental coordinate approaches on the dual variables with an implicit variance reduction strategy on the primal variables. 
% obtaining $O(d)$ per-iteration cost and good empirical  performance, but with a theoretical convergence rate of  $O(n/k)$. 
Recently, under a separability assumption for dual variables, \cite{song2021variance} proposed a new incremental coordinate method with an initialization step that requires a single access to the full data. 
This approach, known as \emph{variance reduction via  primal-dual  accelerated dual averaging} (\vrpda), obtains the first theoretical bounds that are better than their deterministic counterparts in the class of incremental coordinate approaches. 
The \vrpda~algorithm serves as the main motivation for our approach. %used in this paper.

It is of particular interest to design algorithms that scale with the number of nonzero elements in $\mA$ for at least two reasons: (i) the data matrix can be sparse; and (ii) when we consider simplified reformulations of certain complicated models, we often need to introduce sparsely connected auxiliary variables. Nevertheless,  the randomized coordinate algorithms of \cite{alacaoglu2017smooth, chambolle2018stochastic, song2021variance} have $O(d)$ per-iteration cost regardless of the sparsity of $\mA.$ 
To address this issue, \cite{fercoq2019coordinate,latafat2019new} have proposed incremental primal-dual coordinate methods with per-iteration cost that scales with the number of nonzero elements in the  row of $\mA$ used in each iteration, at the price of needing to take a smaller step than for dense $\mA$. Moreover, \cite{alacaoglu2020random} has proposed a random extrapolation approach that admits both low per-iteration cost and larger step size. 
Despite these developments, all these algorithms produce less accurate iterates than the methods with $O(d)$ per-iteration cost, thus degrading their worst-case complexity. 
\footnote{Subsequent to this paper, a version of the \textsc{PURE-CD} algorithm of \cite{alacaoglu2020random} that exploits sparsity in $\mA$ was developed and analyzed in \cite{alacaoglu22a}.}

Finally, for the special case of LP, based on the positive Hoffman constant~\cite{hoffman2003approximate}, \cite{applegate2021faster} proved that the %the sharpness of the 
primal-dual formulation of LP exhibits a sharpness property that lower-bounds the growth of a normalized primal-dual gap from the same work. 
Leveraging this sharpness property, \cite{applegate2021faster} proposed a restart scheme for the deterministic first-order methods discussed above to obtain linear convergence. \cite{applegate2021practical} further extended this restart strategy using various heuristics to improve practical performance.

\subsection{Motivation}

We sharpen the focus from general bilinearly coupled convex-concave min-max problems to~\eqref{eq:glp} and its primal-dual formulation~\eqref{eq:pd-glp}, because many complicated models can be reformulated as \eqref{eq:glp} and because this formulation possesses additional structure that can be exploited in algorithm design. 
Our motivation for focusing on~\eqref{eq:glp} is to bridge the large gap between the well-studied stochastic variance reduced first-order methods~\cite{johnson2013accelerating,allen2017katyusha,song2020variance,song2021variance} and the increasingly popular and complicated, yet highly structured large-scale problems arising in distributionally robust optimization (DRO)~\cite{wiesemann2014distributionally, shafieezadeh2015distributionally, namkoong2016stochastic, esfahani2018data,hu2018does, duchi2018learning, li2019first, duchi2021statistics, yu2021fast}; %and neural network verification (NNV)~ \cite{serra2018bounding, weng2018evaluating, zhang2018efficient, bunel2018unified,tjeng2018evaluating,raghunathan2018semidefinite, singh2019abstract, salman2019convex, tjandraatmadja2020convex, bunel2020lagrangian, anderson2020strong}
see also a recent survey by \cite{rahimian2019distributionally} %,liu2020algorithms} 
and references therein. 

% \jd{Should add more or earlier references for DRO and NNV.} \cb{The references I added are survey papers. } \jd{I would then add those references at the end of the sentence and write ``see recent surveys by \cite{rahimian2019distributionally,liu2020algorithms} and references therein.''} \cb{OK. Will do that. So many papers for it. }
 
For DRO problems with ambiguity sets defined by $f$-divergence \cite{namkoong2016stochastic,hu2018does,levy2020large}, the original formulation is a nonbilinearly coupled convex-concave min-max problem. 
Even the well constructed reformulation in~\cite{levy2020large} does not admit unbiased stochastic gradients, leading to complicated algorithms and analysis. 
For DRO problems with ambiguity sets defined by Wasserstein metric \cite{shafieezadeh2015distributionally, esfahani2018data, li2019first,yu2021fast,HoNW20a}, the original formulation is in general infinite-dimensional. 
(Finite-dimensional reformulations  \cite{shafieezadeh2015distributionally,esfahani2018data} exist for special cases of logistic regression and smooth convex losses.) 
Solvers that have been proposed for DRO with Wasserstein metric are either multiple-loop deterministic ADMM~\cite{li2019first} or are designed for general convex-concave problems \cite{yu2021fast}.

%For NNV problems with ReLU activation functions, the original formulation is equivalent to a mixed integer program (MIP)~\cite{tjeng2018evaluating},  which is NP-hard  in general. As a result, to obtain an approximate solution, we need to relax the MIP to an LP. The tightness of the relaxation depends on the MIP reformulation we use, such as the Big-M reformulation, multiple-choice reformulation, and the reformulation with an exponential number of constraints~\cite{anderson2020strong}. Not surprisingly, there often exists a tradeoff between tightness and computational tractability. In the NNV community, some first-order methods \cite{de2021scaling} have been proposed; however, they appear to be disconnected from the recent advances in stochastic optimization. For tightness, first-order methods have been proposed to solve the reformulation with an exponential number of constraints by using an efficient separation oracle~\cite{tjandraatmadja2020convex,de2021scaling}, but without providing any 
% % \jd{even asymptotic?} \cb{I think so. They only care about practical performance. }
%convergence guarantees. Meanwhile, multiple-choice reformulation is more compact and has the same tightness as the reformulation with an exponential number of constraints, but due to the lack of efficient solvers, this reformulation has gotten little attention so far~\cite{anderson2020strong}.  

By introducing auxiliary variables with sparse connections,\footnote{``Sparse connections'' here means that even though the newly introduced variables may substantially increase the problem dimensions, the number of nonzero entries in the constraint matrix remains of the same order. } 
%\el{I'm not sure what simple connections mean here. Not sure if it is a well-known terminology though?} \cb{how about sparse connections?}\el{sparse connection sounds good!}
we show that DRO with ambiguity sets based on both $f$-divergence and the Wasserstein metric can be reformulated as  \eqref{eq:glp}. Thus, complicated DRO problems can be addressed by a simple, efficient, and scalable algorithm for \eqref{eq:glp}. 
% \el{Problem with this sentence. Maybe reword it to "."} . 
Our algorithm for solving \eqref{eq:glp} and the proposed reformulations of DRO are our main contributions. %of this paper.
\subsection{Contributions}\label{sec:Contribution}
% Our main contributions can be summarized as follows.
\paragraph{Algorithm.} Motivated by \vrpda~\cite{song2021variance}, we propose a simple, efficient, and scalable algorithm for  \eqref{eq:pd-glp}. Our algorithm combines an incremental \emph{coordinate} method with exploitation of the \emph{linear} structure for the dual variables in \eqref{eq:pd-glp} and the implicit  \emph{variance reduction} effect in the algorithm, so we name it \emph{coordinate linear variance reduction} (\clvr, pronounced ``clever''). \clvr~is inspired by \vrpda~but customized to the particular structure of \eqref{eq:pd-glp}. 
In particular, by exploiting the fact that the max problem is linear and unconstrained in the dual variable vector $\vy \in \R^n$, we find that the expensive initialization step used in \vrpda~is not needed and we can take simpler and larger steps. 
Further, in the structured case in which $\mA$ is sparse and  the convex constraint set $\gX$ and the regularizer $r(\vx)$ are fully separable\footnote{We state the results here for the fully separable setting for convenience of comparison; however, our results are also applicable to the block separable setting.}, we show that the dual averaging update in \clvr~enables us to design an efficient lazy update strategy for which  the per-iteration cost of \clvr~scales with the number of nonzero elements of the selected row from $\mA$ in each iteration, which is potentially much lower than the order-$d$ cost in \vrpda.    
Finally, \clvr~uses extrapolation on dual variables rather than on primal variables considered in \vrpda, which significantly reduces implementation complexity of our lazy update strategy for structured variants of \eqref{eq:pd-glp}. On the technical side, although both \clvr~and \vrpda~are randomized algorithms that bound the primal-dual gap in expectation, the guarantee provided by \clvr~is stronger as it allows bounding the expectation of the supremum gap as opposed to the supremum of expected gap in \vrpda.

To state our complexity results, we make the following scaling assumption. 
\begin{assumption}\label{ass:R-L}
$L:=\|\mA\|$ and each row of $\mA$ in \eqref{eq:glp} is normalized with Euclidean norm $R$. 
\end{assumption}
Preprocessing in modern LP solvers \cite{gurobi} often ensures normalized rows/columns for the data matrix.
%In Assumption \ref{ass:R-L}, we know that 
Observe that $R \le  L \le \sqrt{n}R$, 
% In the worst case, which happens, for example, when all the elements of $\mA$ are the same, $L$ can be $\sqrt{n}$-times larger than $R.$ 
the upper bound being achieved when all elements of $\mA$ have identical value. 
Although the latter case is extreme, there exist ill-conditioned practical datasets where we can expect significant performance gains if the complexity can be reduced from $O(L)$ to $O(R)$. 
(We provide empirical comparison between the values of $L$ and $R$ in practical problems in Section~\ref{sec:num-experiments-discussion}.)

In Table~\ref{tb:general-result}, we give the overall complexity bounds (total number of arithmetic operations) and the per-iteration cost of a representative set of existing algorithms, including our \clvr~algorithm, for solving a structured form of \eqref{eq:pd-glp} in which the set $\gX$ and the function $r$ have separable structure: $\gX = \gX_1\times\cdots\times \gX_d$ with $\gX_i\in \sR \,(i\in[d])$ and  $r(\vx) := \sum_{i=1}^d r(x^i).$ 
To make the complexity results comparable, we assume further that for the stochastic algorithms \cite{chambolle2018stochastic, alacaoglu2021stochastic, song2021variance} and our \clvr~algorithm, we draw one row of $\mA$ per iteration uniformly at random. 
The general convex setting corresponds to $r(\vx)$ being general convex ($\sigma = 0$), while the strongly convex setting corresponds to $r(\vx)$ being $\sigma$-strongly convex ($\sigma>0$).

As shown in Table \ref{tb:general-result}, all the algorithms have optimal dependence on $\epsilon$ \cite{ouyang2018lower}, while the dependence on the ambient dimensions $n,d$, the number of nonzero elements of $\mA$ ($\mathrm{nnz}(\mA)$), and the constants $L$ and $R$ are quite different.  
For both the general convex and strongly convex settings and among coordinate-type methods, \clvr~is the first algorithm that reduces the runtime dependence on the input matrix size from $n d$ to $\mathrm{nnz}(\mA)$. %has no explicit dependence on the ambient dimensions $d$ and $n$, instead depending on the number of nonzero elements $\mathrm{nnz}(\mA)$. 
Moreover, the complexity of \clvr~depends on the max row norm $R$ rather than the spectral norm $L$, and the per-iteration cost of \clvr~depends only on the nonzero elements of the selected row from $\mA$ in each iteration, which can be far less than $d$.

% \footnote{In a private communication, A. Alacaoglu indicated  that the PURE-CD algorithm  of \cite{alacaoglu2020random} has the bound $O(\mathrm{nnz}(\mA)(L + d)/\epsilon)$ in the setting of $\sigma = 0.$}\jd{I'm not sure this is the right place for this footnote.}

% \cb{ ... }
% 1. 
% 2. dependence 

% \cb{As shown in Table \ref{tb:general-result}, among the algorithms that have no dependence on $n$ --- \smp and \clvr --- \clvr~has better dependence on $\epsilon$.
% Among the algorithms that have optimal dependence on the accuracy parameter $\epsilon,$ \clvr~is the first algorithm that has no dependence on the number of samples $n$. 
% The complexity results of \clvr~have significant implications: We can obtain the optimal $O(1/\epsilon)$ guarantee for \eqref{eq:glp} in less than a single pass over the data, while the rate is significantly better than the best-known bound of $O(1/\epsilon^2)$ of vanilla stochastic gradient method.}
% \cb{Explanation: if we reformulate the problem \eqref{eq:glp} as 
% \begin{align} \label{eq:alternative}
% \min_{\vx\in\gX\subset\sR^d} \max_{\vy\in\sR^n} \Big\{ L(\vx,\vy) = \vc^T\vx + r(\vx)   + \frac{1}{n}\big(\vy^T\mA\vx  - \vb^T\vy\big)\Big\},  
% \end{align}
% then we can claim the independence from the number of samples. For the reformulation \eqref{eq:pd-glp}, even the SGD variant (i.e., SMP) can not claim the independence from the number of samples as $\vy^T\mA\vx$ is just a finite-sum not a finite-average. 
% 
By exploiting the linear structure again, we provide explicit guarantees for both the objective value and the constraint satisfaction of~\eqref{eq:glp}. 
Further, the analysis of \clvr~applies to the more general {\em block-coordinate} update setting, which is better suited to modern parallel computing platforms.     
Finally, following the restart strategy based on the \emph{normalized duality gap}  for LP introduced in \cite{applegate2021faster}, we propose a more straightforward strategy to restart our \clvr~algorithm (as well as other iterative algorithms for \eqref{eq:pd-glp}): Restart the algorithm every time a widely known  metric for LP optimality~\cite{andersen2000mosek} halves. Compared with the normalized duality gap, the LPMetric can be computed more efficiently and in a more straightforward fashion.

% In practical modeling, although data may have no obvious sparsity pattern to exploit, there are many primal and dual variables\footnote{Some of the variables are auxiliary and introduced by reformulation.} that have simple connections each other.
 
% By contrast, to the best of our knowledge, lazy-update implementations of all existing algorithms based on mirror descent depend strongly on the particular form of $r(\vx)$ and  $\gX$. (See, for instance, \cite{zhang2015stochastic}.)

% Finally, we propose an effective simple heuristic to exploit the potential sharpness property of \eqref{eq:glp} \cite{applegate2021faster}, that is, we restart algorithms for \eqref{eq:glp} when the norm $\|\mA\tilde{\vx} - \vb\|$ is half of  $\|\mA{\vx}_0 - \vb\|$, where $\tilde{\vx}$ and $\vx_0$ denote the output and the input of the algorithm, respectively. In our experiments, this heuristic leads to linear convergence. 

% in our implementation for \clvr, we show that for LP problems with the sharpness property, the restart scheme \cite{applegate2021faster} proposed for deterministic primal-dual methods can also be applied to \clvr. 
% As a result, the restarted \clvr~exhibits linear convergence for these problems. 

% motivated by the recent progress for LP, we exploit the sharpness of LP and propose an restart scheme 

% \cb{The table need to be checked a little bit as the settings of these algorithms are quite different.   }
\begin{table*}[t!]
\centering
\begin{threeparttable}[b]
\begin{small}
\caption{Overall complexity and per-iteration cost for solving  structured \eqref{eq:pd-glp}. 
(``---'' indicates that the corresponding result does not exist or is unknown.)}
% %\vspace{0.1mm}
%\ificml\else 
\tabcolsep=0.1cm %\fi
\begin{tabular}{|c|c|c||c|c|}
\hline 
\multirow{2}{*}{Algorithm}      &       {General Convex}                   & {Strongly Convex}               &     {Per-Iteration}            \\
                                &      (Primal-Dual Gap)             &     (Distance to Solution)   &                Cost            \\
\hline
\textsc{pdhg}   &    \multirow{2}{*}{$O\big( \frac{\mathrm{nnz}(\mA)  L }{\epsilon}\big)$ }         &  \multirow{2}{*}{$O\big( \frac{(\mathrm{nnz}(\mA) +n+d) L}{\sigma\sqrt{\epsilon}}\big)$}    &        \multirow{2}{*}{$O(\mathrm{nnz}(\mA))$}  \\
% \cite{chambolle2011first} 
CP(\citeyear{chambolle2011first})
& & &\\
% \hline
% \textsc{smp}   &    \multirow{2}{*}{$O\big(\frac{nd L}{\epsilon} + \frac{nd\tau^2}{\epsilon^2} \big)$ }\tnote{1}          &  \multirow{2}{*}{---}    &        \multirow{2}{*}{$O(d)$}    \\
% \cite{juditsky2011solving} & & &\\
\hline
\textsc{spdhg}   &    \multirow{2}{*}{$O\big( \frac{n d L}{\epsilon}\big)$ }         &  \multirow{2}{*}{$O\big( \frac{n d L}{\sigma\sqrt{\epsilon}}\big)$}  &        \multirow{2}{*}{$O(d)$}    \\
% \cite{chambolle2018stochastic}
CERS(\citeyear{chambolle2018stochastic})
&  & &\\
\hline
\textsc{evr}   &    \multirow{2}{*}{$O\big(\mathrm{nnz}(\mA) +  \frac{ \sqrt{\mathrm{nnz}(\mA)(n+d)n}R}{\epsilon}   \big)$ }         &   \multirow{2}{*}{---}    &        \multirow{2}{*}{$O(n+d)$}    \\
 AM (\citeyear{alacaoglu2021stochastic})  &  & &\\
\hline
{\vrpda}              &   \multirow{2}{*}{$O(nd \log {\min\{ \frac{1}{\epsilon}, n\}}%\frac{1}{\epsilon} 
+ \frac{ndR}{\epsilon})$}               &     \multirow{2}{*}{$O(nd \log \min\{ \frac{1}{\epsilon}, n\} + \frac{nd R}{\sigma\sqrt{\epsilon}})$}     &              \multirow{2}{*}{$O(d)$ }          \\
% \cite{song2021variance}
SWD(\citeyear{song2021variance})   &              &   & \\  
\hline
\textsc{clvr}              &   \multirow{2}{*}{$O(  \frac{ \mathrm{nnz}(\mA) R }{\epsilon} )$}                &     \multirow{2}{*}{$O(\frac{\mathrm{nnz}(\mA) R}{\sigma\sqrt{\epsilon}})$}     &  \multirow{2}{*}{$O(\mathrm{nnz}(\mathrm{row}(\mA)))$}          \\
(\textbf{This Paper})  &          &   & \\  
\hline
\end{tabular}\label{tb:general-result}
% \begin{tablenotes}
% \item[1] $\tau^2$ is an absolute constant that denotes the variance of stochastic gradients.
% \end{tablenotes}
\end{small}
%\vspace{0.1in}
\end{threeparttable}
%\vspace{-0.6cm}
%\cbnote{We may remove the column of per-iteration cost. } \sjw{Good idea as it looks like we need to narrow the table.}
\end{table*}
%\vspace{-2mm}
\paragraph{DRO reformulations.} %Distributionally Robust Optimization (DRO).} %% Already defined acronym
When the loss function is convex, DRO problems with  ambiguity sets based on $f$-divergence \cite{namkoong2016stochastic} or Wasserstein metric \cite{esfahani2018data} are convex. However, because both problems either have complicated constraints or are infinite-dimensional, vanilla first-order methods are  inapplicable. 

For DRO with $f$-divergence, we show that by using convex conjugates and introducing auxiliary variables, the problem can be reformulated as a~\eqref{eq:glp}. 
As a result, the issue of biased stochastic gradients encountered in~\cite{levy2020large} does not arise, and  \clvr~can be applied.  
Even though the resulting problem has larger dimensions,  due to the sparseness of the introduced auxiliary variables and the lazy update strategy of \clvr, it can be solved with complexity scaling %essentially~\jd{why ``essentially''? Is it larger than nnz(A) in the worst case?} \cb{That is, it will introduce a factor of $O(n)$ Of course, we can remove essentially as $n \le nnz(A)$} 
only with the number of nonzero elements of the data matrix.  Due to being cast as a \eqref{eq:glp}, the DRO problem can be solved with $O(1/\epsilon)$ iteration complexity with \clvr, while existing methods such as~\cite{levy2020large} have $O(1/\epsilon^2)$ iteration complexity, with higher iteration cost because of the batch of samples needed to reduce bias. 
This improvement is enabled in part by considering the primal-dual gap (rather than the primal gap considered in \cite{levy2020large}) and by allowing the constraints to be approximately satisfied (see Corollary~\ref{prog:obj-constraint}).% \jd{Need to be careful here, as there is a lower bound of $1/\epsilon^2$ on the number of samples needed to solve the problem.} \cb{They consider vanilla stochastic gradient methods with concentration from statistics, so they are limited to $1/\epsilon^2$. } \jd{Isn't the lower bound statistical, i.e., algorithm-independent?} \cb{I think so. They just use vanilla stochastic gradients, so must be $1/\epsilon^2.$} \jd{That's for the upper bound (algorithm). The lower bound should be algorithm-independent.}  \jd{Can we write the following? ``This improvement is, in part, enabled by considering the primal-dual gap (in place of the primal gap considered in \cite{levy2020large}) and by allowing the constraints to be only approximately satisfied (see Corollary~\ref{prog:obj-constraint}).'' This is to ensure that we do not get attacked by people familiar with the lower bounds.} \cb{It's good, while we give separable guarantees for both objectives and constraints.  Meanwhile, they have given the lower bound of  $1/\epsilon^2$.  }
% By introducing auxiliary variables, we show that both DRO problems with ambiguity sets of $f$-divergence and Wasserstein metric can be reformulated as \eqref{eq:glp} problems,  thus our ICLR algorithm can apply. 

% \cb{Should emphasize the technique we use for reformulation, show the consequence to relate the motivation. To show the improvement.}

% In Table \ref{tb:dro-result}, we list our overall complexity results as well as the best-known results and the lower bounds for DRO with CVaR and $\chi^2$ divergence ambuguity sets. Both the upper and lower bounds are given for the original objective values \cite{namkoong2016stochastic,levy2020large}, while ours are given in terms of the primal-dual gaps for the equivalent reformulations. 

% As a result, our complexity results are significantly better as we exploit the hidden structure by reformulation and we do not demand that the constraints by ambuguity set are strictly satisfied. 
For DRO with Wasserstein metric, following the reformulation of~\cite[Theorem~1]{shafieezadeh2015distributionally}, we show further that the problem can be cast in the form of \eqref{eq:glp}. 
Compared with the existing reformulations \cite{shafieezadeh2015distributionally,esfahani2018data,li2019first,yu2021fast}, our reformulation can handle both smooth and nonsmooth convex loss functions. 
In fact, our reformulation can provide a more compact form for nonsmooth piecewise-linear convex loss functions (such as hinge loss). 
Moreover, compared with algorithms customized to this problem~\cite{li2019first} and extragradient methods~\cite{korpelevich1976extragradient, nemirovski2004prox, yu2021fast} for general convex-concave min-max problems, our \clvr~method attains the best-known iteration complexity and per-iteration cost, as shown in Table~\ref{tb:general-result}.

\iffalse %%%%%%%%%%%%%%%%
\paragraph{Reformulations of Neural Network Verification (NNV).} 
% Verifying the robustness of deep neural network becomes more and more popular in recent years.  NNV is originally reformulated as a mixed integer programming (MIP), which is NP-hard in general. There are different kinds of MIP formulation, such as the one with no auxiliary continuous variables and linear number of constraints (i.e., Big-M reformulation),  the one with continuous variables and quadratic number of constraints (i.e., multiple-choice reformulation) and the one with no continuous variables and exponential number of constraints. 
There are many LP relaxations and algorithms for NNV; see, for example,  \cite{tjandraatmadja2020convex, bunel2018unified, anderson2020strong, tjeng2018evaluating, de2021scaling}. 
In this paper, we consider the multiple-choice reformulation of \cite{balas1985disjunctive, balas1998disjunctive, vielma2011modeling, anderson2020strong}. 
To get tighter relaxation, the multiple-choice reformulation needs to introduce quadratic number of auxiliary variables. However, by using the lazy-update version of \clvr, we show that the complexity results  of \clvr~essentially scale with the number of nonzero elements of the data matrix of the original NNV problem. 
%  In our experiments, we obtain good empirical performance.\jd{It would be good to be able to say something stronger than ``good.''} 
To the best of our knowledge, this is the first algorithmic study of the multiple-choice reformulation in NNV context. 
\fi%%%%%%%%%%%%%%%%%%%%%%%%%%%%%

\vspace{-2mm}
\section{Notation and preliminaries}\label{sec:prelims}
For any positive integer $p$, we use $[p]$ to denote $\{1, 2, \dots, p\}$. 
We assume that there is a given %(disjoint) JD: a partition is disjoint by definition. Otherwise, it is not a partition. 
partition of the set $[n]$ into sets $S^j$, $j \in [m],$ where $|S^{j}| = n^j > 0$ and $\sum_{j=1}^m n^{j} = n$. For $j\in[m]$, we use $\mA^{S^{j}}$ to denote the submatrix of $\mA$ with rows indexed by $S^{j}$ and $\vy^{S^{j}}$ to denote the subvector of $\vy$ indexed by $S^{j}$. 
We use $\vzero_d$ and $\vone_d$ to denote the vectors with all ones and all zeros in $d$ dimensions, respectively. 
Unless otherwise specified, we use $\|\cdot\|$ to denote the Euclidean norm for vectors and the spectral norm for matrices. 
For a given proper convex lower semi-continuous function $f:\sR\rightarrow\sR \cup \{+\infty\},$ we define the convex conjugate in the standard way as $f^*(y) = \sup_{x\in\sR}\{yx - f(x)\}$ (so that $f^{**} = f)$. 
For a vector $\vu,$ the inequality $\vu\ge \vzero$ is applied entry-wise. 
For a convex function $r(\vx)$, we use $r'(\vx)$ to denote an element of the subdifferential set $\partial r(\vx)$.
The proximal operator of $r(\vx)$ over $\gX$ is 
% for $\beta >0,$ \jd{Since you are multiplying by $\beta$ and stating the operator in terms of $\beta r,$ you don't really need $\beta.$ Just define it for $r$ and use it later for $\beta r;$ it's consistent.} 
%
\begin{equation}\label{eq:prox}
\prox\nolimits_{r}(\hat{\vx}) =  \argmin_{\vx\in\gX}\Big\{  \frac{1}{2}\|\vx - \hat{\vx}\|^2 +  r(\vx)\Big\}.   
\end{equation}

Further, we make the following assumptions, which apply throughout the convergence analysis.  

\begin{assumption}\label{assmpt:Nash}
\eqref{eq:pd-glp} attains at least one primal-dual solution  $(\vx^*, \vy^*)$. $\gW^*$ denotes the set of all primal-dual solutions. %The set of the Nash points  $(\vx^*, \vy^*)$ is $\gW^*.$
\end{assumption}
Due to the convex-concave property of \eqref{eq:pd-glp}, $\gW^*$ is a convex set in $\gX\times \sR^n.$
\begin{assumption}\label{ass:L-hat}
$\hat{L} = \max_{j\in[m]}\|\mA^{S^{j}}\|$ is given at the input, where $\|\mA^{S^{j}}\| = \max_{\|\vx\|\le 1}\|\mA^{S^{j}}\vx\|.$
\end{assumption}
Note that $\hat{L}$ can be obtained either via preprocessing of the data or by parameter tuning. 
By combining Assumptions~\ref{ass:R-L} and~\ref{ass:L-hat}, it follows that $R\le \hat{L}\le \sqrt{\max_{j\in[m]}|S^{j}|}R.$
\begin{assumption}\label{assmpt:r}
$r(\vx)$ is $\sigma$-strongly convex $(\sigma\ge 0)$; that is, for all $\vx_1$ and $\vx_2$ in $\gX$ and all  $r'(\vx_2)\in\partial r(\vx_2)$, we have 
$r(\vx_1) \ge r(\vx_2) + \langle r'(\vx_2), \vx_1 - \vx_2\rangle + \frac{\sigma}{2}\|\vx_1 - \vx_2\|^2.$
\end{assumption}

%\vspace{-2mm}
For convex-concave min-max problems, a common metric for measuring solution quality is the primal-dual gap, which, for a feasible solution $(\vx, \vy)$ of  \eqref{eq:pd-glp}, %the primal-dual gap 
is defined by
\begin{align}
\sup_{(\vu, \vv)\in\gX\times \sR^n}\{\gL(\vx, \vv) -  \gL(\vu, \vy)\}.       
\end{align}
However, as the domain of $\vv$ is unbounded, the primal-dual gap can be infinite, which makes it a poor metric for measuring the progress of algorithms. As a result, for measuring the progress of our algorithm, we consider the following restricted primal-dual gap instead: 
\begin{align}
\sup_{(\vu, \vv)\in\gW}\{\gL(\vx, \vv) -  \gL(\vu, \vy)\},       
\end{align}
where $\gW\subset \gX\times \sR^n$ is a compact (i.e., closed and bounded) convex set. 
The use of a restricted version of primal-dual gap is standard in the existing literature; see, e.g.,~\cite{nesterov2007dual,chambolle2011first}.

%
% Assumptions \ref{assmpt:Nash}-\ref{assmpt:r} are made  throughout the convergence analysis. \jd{This is already stated before the assumptions; no need to repeat.}

% {assmpt:Nash}
%\vspace{-2mm}
\section{The CLVR algorithm}%Coordinate Linear Variance Reduction Method}
\label{sec:clvr}
%Throughout the paper, we assume that there is a given partition of the set $\{1, 2, \dots, n\}$ into sets $S^{j}$, $j \in \{1, \dots, m\},$ where $|S^{j}| = n^{j} > 0.$ 
% \cb{In Section~\ref{sec:alg.basic} we state the \clvr~algorithm and provide its complexity guarantees. 
% We propose an efficient implementation version for the case in which $r$ and $\gX$ are component-separable in Section~\ref{sec:lazy}. 
% In Section~\ref{sec:restart}, we describe a restart strategy that empirically speeds up the method.} %allows the method to attain linear convergence.   

% This implementation is based on lazy updates that reduce per-iteration cost and lead to the overall algorithm complexity  that scales with $\mathrm{nnz}(\mA)$ rather than the product of matrix dimensions $d\cdot n$.  

%\vspace{-0.15cm}
\subsection{Algorithm and analysis for general formulation}\label{sec:general-algorithm}
\label{sec:alg.basic}

Algorithm~\ref{alg:clvr-impl} specifies \clvr~for~\eqref{eq:pd-glp} in the general setting. 
The algorithm alternates the full update for $\vx_k$ in Step~4 ($O(d)$ cost) with an incremental block coordinate update for $\vy_k$ in Steps 5 and 6 (with $O(|S^{j_k}|d)$ cost for dense $\mA$). 
The auxiliary variables $\vz_k$ and $\vq_k$ accumulate the cancellation terms in the estimation sequence and give a pathway to a straightforward development of the lazified CLVR, which appears as Algorithm~\ref{alg:clvr-lazy2} in the appendix.
The cost of updating auxiliary vectors $\vz_k$ and $\vq_k$ is $O(|S^{j_k}|d)$ and $O(d)$, respectively.  
%After that, we update $a_{k+1}$ and $A_{k+1}$ in Step~7 with $O(1)$ cost and update the vector $\vz_k$ with $O(|S^{j_k}|d)$ cost. Then we update $\vq_k$ with $O(d)$ cost, which is further used to update $\vx_{k+1}$ in the $(k+1)^\mathrm{st}$ iteration.    
In essence, \clvr~is a primal-dual coordinate method that uses a \emph{dual averaging} update for $\vx_k$, then updates the state variables $\{\vq_k\}$ by a \emph{linear recursion}, and computes $\vx_k$ from $\vq_{k-1}$ via a \emph{proximal step} without direct dependence on $\vx_{k-1}$. 
%An equivalent version  of Algorithm~\ref{alg:clvr-impl} used in the analysis can be found in Appendix \ref{sec:clvr-proof}. %% not needed here
The output $\tilde{\vx}_K$ is a convex combination of the iterates $\{\vx_k\}_{k=1}^K$, as is standard for primal-dual methods. 
However, $\tilde{\vy}_K$ is only an \emph{affine} (not  convex) combination of $\{\vy_k\}_{k=0}^K$, as it involves the term $-(m-1)\vy_0$ (whose coefficient is  negative) and some of the coefficients $m a_k - (m-1)a_{k+1}$ multiplying $\vy_k$ for $k \in \{1, \dots K-1\}$ may also be negative.  
An affine combination still provides valid bounds because the dual variable vector $\vy$ appears linearly in \eqref{eq:pd-glp}. 
Moreover, in Step~9, the term $m a_k(\vz_k - \vz_{k-1})$ serves to cancel certain errors from the randomization of the update w.r.t.~$\vy_k$, thus playing a key role in implicit variance reduction. 
%\el{Other comments: We should use labels when referring to the steps in algorithms. (I can fix these later)}

\begin{algorithm}[ht!]
\caption{Coordinate Linear Variance Reduction (\clvr)}\label{alg:clvr-impl}
\begin{algorithmic}[1]
\STATE \textbf{Input: } $\vx_0\in\gX, \vy_0 \in\sR^n, \vz_0 = \mA^T\vy_0$, $\gamma>0, \hat{L} >0, \sigma\ge 0,  K, m, \{S^{1}, S^{2}, \ldots, S^{m}\}.$
\STATE $a_1 = A_1 = \frac{1}{2\hat{L} m}, \vq_0 = a_1 (\vz_0 + \vc)$. 
\FOR{$k = 1,2,\ldots, K$}
% \STATE $a_{k} =  \frac{\sqrt{1+\sigma A_{k-1}}}{2R}, A_{k} = A_{k-1} + a_{k}$.
\STATE $\vx_k = \prox_{\frac{1}{\gamma}A_k r}(\vx_0 - \frac{1}{\gamma}\vq_{k-1}).$ 
\STATE Pick $j_k$ uniformly at random in $[m].$
% \STATE $\vp_{k}^{S^i} = \begin{cases}
% \vp_{k-1}^{S^i}, &  i \neq j_k \\
% \vp_{k-1}^{S^i} - m a_k\big(\mA^{S^i}\vx_k  - \vb^{S^i}\big), & i = j_k
% \end{cases}$.  \quad\quad   
\STATE $ \vy_{k}^{S^i} = 
\begin{cases}
\vy_{k-1}^{S^i}, & i \neq j_k \\
\vy_{k-1}^{S^i} + \gamma m a_k\big(\mA^{S^i}\vx_k  - \vb^{S^i}\big), &  i = j_k\\
\end{cases} $.
\STATE $a_{k+1} = \frac{\sqrt{1 + \sigma A_{k}/\gamma}}{ 2\hat{L} m}, A_{k+1} = A_{k} + a_{k+1}.$
%\vspace{0.03in}
% \STATE $\bar{\vy}_{k} = \vy_{k} +  \frac{ma_{k}}{a_{k+1}} (\vy_{k} - \vy_{k-1}). $
\STATE $\vz_k = \vz_{k-1} + \mA^{S^{j_k},T}(\vy_{k}^{S^{j_k}} - \vy_{k-1}^{S^{j_k}}).$
% \STATE $\vq_k = \vq_{k-1} + a_{k+1}\big(\vz_{k}+ \vc \big)  + ma_{k} \mA^{S^{j_k},T}(\vy_{k}^{S^{j_k}} - \vy_{k-1}^{S^{j_k}}).$
\STATE  $\vq_k = \vq_{k-1} + a_{k+1}\big(\vz_{k}+ \vc \big)  + ma_{k}(\vz_k - \vz_{k-1}).$ \label{line:a1l9}
\ENDFOR
\STATE \textbf{return } $\tilde{\vx}_K = \frac{1}{A_K}\sum_{k=1}^K a_k \vx_k$, $\tilde{\vy}_K = \frac{1}{A_K}\sum_{k=1}^K (a_k \vy_k + (m-1)a_k(\vy_k - \vy_{k-1}))$.
\end{algorithmic}
\end{algorithm}

Theorem~\ref{thm:clvr} provides the convergence results for Algorithm~\ref{alg:clvr-impl}. The proof is provided in Appendix~\ref{appx:proofs-general}. In the theorem (as in the algorithm), $\gamma$ is a positive parameter that can be tuned. 
\begin{restatable}{theorem}{mainthmvr}\label{thm:clvr}
Let  $\vx_k$, $\vy_k$, $k \in [K],$ be the iterates of Algorithm~\ref{alg:clvr-impl} and let  $\tilde{\vx}_k$, $\tilde{\vy}_k$ be defined by
\begin{equation} \label{eq:vxyk} 
\tilde{\vx}_k = \frac{1}{A_k}\sum_{i=1}^k a_i \vx_i, \quad
\tilde{\vy}_k =\frac{1}{A_k}\sum_{i=1}^k (a_i \vy_i + (m-1)a_i(\vy_i - \vy_{i-1})),
\end{equation}
for $k \in [K]$. Let $\gW_k\subset\gX\times\sR^n$, $k \in [K]$, be a sequence of compact convex sets such that $(\tilde\vx_k,\tilde\vy_k) \in \gW_k \subset \gW \subset \gX\times\sR^n$, where $\gW$ is also convex and compact. Then: %the following bound holds:
\begin{align} \label{eq:ks6}
\begin{split}
 & \E\Big[\sup_{(\vu, \vv)\in\gW_k}\{\gL(\tilde{\vx}_k, \vv) - \gL(\vu, \tilde{\vy}_k)\}\Big] \\
 & \le \frac{1}{A_k}\bigg( \E\Big[\frac{\gamma}{2}\|\hat{\vu} - \vx_0\|^2 + \frac{1}{\gamma}\|\hat{\vv} - \vy_0\|^2 \Big] + \frac{\gamma}{2}\|\vx^* - \vx_0\|^2 +   \frac{1}{2\gamma}\|\vy^* - \vy_0\|^2\bigg), 
\end{split}
\end{align}
where $(\hat{\vu}, \hat{\vv}) = \arg\sup_{(\vu, \vv)\in\gW_k}\{\gL(\tilde{\vx}_k, \vv) - \gL(\vu, \tilde{\vy}_k)\}. $ 
%\todo{\sjw{Do we know that the arg-sup is attained?} \cb{We don't know. In literature, they often assume the radius $D$ of $\gW$ and relax $\frac{\gamma}{2}\|\hat{\vu} - \vx_0\|^2 +   \frac{1}{\gamma}\|\hat{\vv} - \vy_0\|^2$ to  $D^2$.  } \sjw{I'm a bit confused by your response - because of compactness of $\gW$ and concavity of $\gL(\tilde{\vx}_k, \vv) - \gL(\vu, \tilde{\vy}_k)$ w.r.t $(u,v)$, does it follow that the $\arg\sup$ is attained? It would be if the convex, finite valued function $r$ were continuous. I guess this is the case, right? If you agree, we should just note somewhere that the $\arg\sup$ is attained. } \cb{Yes, with our assumption, $\arg\sup$ is attained. What I mean is that in existing literature, they don't give explicit definition of $\vu$ and $\vv$, but only use $D^2$ in the upper bound.}}
Furthermore, 
\begin{align}
\E\Big[\frac{\gamma+ \sigma A_k}{4}\|\vx_k - \vx^*\|^2 + \frac{1}{2\gamma}\|\vy_k -\vy^*\|^2\Big] \le  \frac{\gamma}{2}\|\vx^* - \vx_0\|^2 +   \frac{1}{2\gamma}\|\vy^* - \vy_0\|^2.      \label{eq:solution-distance}
\end{align}
%
%
% Further, if $(\vx^*, \vy^*)$ is a primal-dual solution to \eqref{eq:pd-glp}, then we also have 
% \begin{align*}
%     \E\Big[ \frac{\gamma + \sigma A_k}{4}\|\vx^* - \vx_k\|^2 + \frac{1}{2\gamma}\|\vy^* - \vy_k\|^2 \Big] \leq \frac{\gamma\|\vx^* - \vx_0\|^2 + \|\vy^* - \vy_0\|^2/\gamma}{2}.
% \end{align*}
%
Define $K_0 = \big\lceil \frac{\sigma}{18\hat{L}m\gamma} \big\rceil.$ Then in the bounds above: %, %we have that $A_k$ grows as  %
%
%% \[
%% A_k \ge \max\bigg\{\frac{k}{\sqrt{2}Rm}, \frac{\sigma}{(3\sqrt{2}Rm)^2\gamma}\Big(k - K_0 + \max\Big\{ %% 3\sqrt{\frac{\sqrt{2}Rm\gamma}{\sigma}}, 1\Big\}\Big)^2\bigg\}. 
%% \]
%
$$
A_k \ge \max\bigg\{\frac{k}{2 \hat{L} m}, \frac{\sigma}{(6\hat{L} m)^2\gamma}\Big(k - K_0 + \max\Big\{ 3\sqrt{{2 \hat{L} m\gamma}/{\sigma}}, 1\Big\}\Big)^2\bigg\}.
$$
\end{restatable}
%
%
%\jd{Why did we go from stating Theorem 1 in terms of arbitrary $\vu, \vv$ to stating it in terms of sup? The former was more genreal and also more relevant for the corollary below.}
%\cb{The former one was weaker than the current one. The guarantee in terms of arbitrary $\vu, \vv$ is a guarantee in the order of max-expectation, which cannot provide any meaningful result for some algorithms. That is, consider optimizing $\min_{x\in[1,-1]}\max_{y\in[1,-1]} xy$, the algorithm we use is setting $x\in\{1,-1\}$ and  $y\in\{1,-1\}$ randomly in each iteration, then you will find that for any $\vu, \vv$, the expectation will always be $0$, which is meaningless as the algorithm does not converge. However, the value of expectation-max for this case is not zero. This case was shown by Ahmet to me.  Meanwhile, such a  max-expectation guarantee cannot make us provide theoretical guarantee for restart, as tower property will be not available.}   
%\jd{But aren't we interested in the guarantee for the original (primal) problem? In that case, you would want to use $\vu = \vx^*,$ $\vv = \argmax_{\vy} \gL(\vxt_k, \vy).$ Also, I understand the argument you are making for the ordering of the sup and expectation, and that's fine. But you didn't need to take sup outside the expectation in the previous version either which was bounding the primal-dual gap for arbitrary $\vu, \vv.$ You could have just chosen $\vu, \vv$ as your current $\hat{\vu}, \hat{\vv}$. (and isn't that exactly what you are doing anyway?)} 
%
%\todo{Steve added this.}
{Observe that $(\hat{\vu}, \hat{\vv})$ in the theorem statement exists because of compactness of $\gW_k$ and our assumptions on $r(\cdot)$.}
The parameter $\gamma$ can be tuned to balance the relative weights of primal and dual initial quantities $\|\vx^* - \vx_0\|$  and $\|\vy^* - \vy_0\|$ (or estimates of these quantities), which can significantly influence practical performance of the method. 

% \todo{Steve added this.}
% \textcolor{blue}{Theorem~\ref{thm:clvr} provides a convergence result for the pair $(\tilde{\vx}_k,\tilde{\vy}_k)$ that is output by Algorithm~\ref{alg:clvr-impl} at iteration $k$. But we note a caveat: The cost of maintaining this pair at each iteration is $O(n+d)$ in general; it is not tied to the sparsity of $\mA$. It is easy to define an efficient scheme for outputting  a vector whose {\em expected value} is $(\tilde{\vx}_k,\tilde{\vy}_k)$. In Algorithm~\ref{alg:clvr-impl}, we output one of the iterations $(\vx_k,\vy_k)$ with probablity $a_k/A_k$, whereas in the lazy-update version, Algorithm~\ref{alg:clvr-lazy2}, we describe an efficient scheme for calculating such an unbiased estimate. Strictly speaking, Theorem~\ref{thm:clvr} does not hold for these estimates, though we would expect to see the convergence rates from the theorem hold in practice.}

In addition to the guarantee on the variational form, due to the linear structure, we also provide explicit guarantees for both the objective and the constraints in \eqref{eq:glp}, stated in the following corollary.

%\jd{Maybe better to change Proposition to Corollary.}
\begin{restatable}{corollary}{propobjconstraint}\label{prog:obj-constraint}
In Algorithm \ref{alg:clvr-impl},  for all $k\ge 1,$ $\tilde{\vx}_k$ satisfies
	\begin{align*}
	\E[ \|\vy^*\|\cdot \|\mA\tilde{\vx}_k - \vb\|] \le\;& \frac{\gamma\|\vx^* - \vx_0\|^2+ \frac{1}{2\gamma}\|\vy^* - \vy_0\|^2  + \frac{1}{\gamma}\E[\|\vv - \vy_0\|^2] }{A_k}, \\
	|\E[ (\vc^T\tilde{\vx}_k + r(\tilde{\vx}_k)) - (\vc^T\vx^* + r(\vx^*))]|\le\;& \frac{\gamma\|\vx^* - \vx_0\|^2  + \frac{1}{2\gamma}\|\vy^* - \vy_0\|^2+ \frac{1}{\gamma}\E[\|\vv - \vy_0\|^2]}{A_k}, 
	\end{align*}
where
$
\vv = 2  \frac{ \|\vy^*\|}{\|\mA\tilde{\vx}_k - \vb\|} (\mA\tilde{\vx}_k - \vb).
$
% \sjw{Previously the def of $\vv$ was missing the factor of $2$.}\cb{Thanks}
% where $\vv$ satisfies $\|\mA\tilde{\vx}_k - \vb\|\cdot\vv = \|\vy^*\|(\mA\tilde{\vx}_k - \vb).$
\end{restatable}
%\cb{Will change the bounds to the case of general $\vy_0$ as in restart,   }
% \cb{Needs to be further corrected. }

In \clvr, we allow for arbitrary $(\vx_0, \vy_0) \in \gX\times\sR^n$. 
Nevertheless, by setting $\vy=\vzero_n$, we can obtain $\vz_0 = \vzero_d$ at no cost --- a useful strategy for large-scale problems since it avoids the (potentially expensive) single matrix-vector multiplication w.r.t.~$\mA$.  

% For this, one needs to remember for each coordinate, the most recent update, wait until a coordinate is selected again and update the averaged vector using this information.

% \begin{remark}
% For our algorithm, the condition that $\vy$ belongs to a whole Euclidean space is key to proving our convergence result. Although many convex-concave min-max problems have constraints or nonsmooth regularizers on the dual variable $\vy,$ we can still reformulate it so that the dual variable is unconstrained and no nonsmooth regularizers are applied to it. For instance, we can verify that 
% \begin{align}
% \min_{\vx\in\gX}\max_{\vy\in\gY}L(\vx, \vy) \equiv \min_{\vx\in\gX, \vz \dom(\delta^*_{\gY})}\max_{\vy\in\sR^n}L(\vx, \vy)-\langle\vz, \vy\rangle + \delta^*_{\gY}(\vz),  
% \end{align}
% where both $\gX$ and $\gY$ are convex sets with $\gY\in\sR^n, $ $\delta^*_{\gY}(z) = \sup_{\vy\in\gY}\vy^T\vz.$  
% \end{remark}

% \cb{
% 1. explain the difference between mirror descent and dual averaging
% 1. 
% mirror descent agile update, lazy update

% then I also use lazy update for coordinate? 
%  lazy update for regularizers

% }

\subsection{Lazy update for sparse and structured \eqref{eq:pd-glp}}\label{sec:lazy}

In Algorithm~\ref{alg:clvr-impl}, direct computation of the iterates $(\vx_k, \vy_k)$ and the output points $(\tilde{\vx}_k, \tilde{\vy}_k)$ can be expensive.
However, \cite{dang2015stochastic} showed that it is possible to only update the averaged vector in the coordinate block chosen for that iteration. 
This strategy requires us to record the most recent update for each coordinate block and update it only when it is selected again, which is tricky and needs to be implemented carefully.
 For sparse and block coordinate-separable instances of \eqref{eq:pd-glp},
we show that by introducing auxiliary variables that are sparsely connected, we can significantly simplify \clvr~and make its complexity scale independently of the ambient dimension $n \cdot d$, instead scaling with  $\mathrm{nnz}(\mA)$.
Due to space constraints, we defer technical details, including the lazy version of \clvr~and associated proofs,  to Appendix~\ref{appx:lazy-clvr}.

\subsection{Restart scheme}\label{sec:restart}
\vspace{-2mm}
We now propose a fixed restart strategy with a fixed number of iterations per each restart epoch and discuss an adaptive restart strategy for the special case of standard-form LP, which corresponds to \eqref{eq:glp} with $r(\vx) \equiv 0$ and $\gX = \{\vx: x_i\ge 0, i\in[d]\}$. We write
\begin{equation}\tag{LP}\label{eq:lp}
 \min_{\vx} \,  \vc^T\vx \; \st \; \mA\vx = \vb, \;  \vx \ge \bm{0}_d,
\end{equation}
and the primal-dual form
\begin{equation}\tag{PD-LP}\label{eq:pd-lp}
\min_{\vx \ge \bm{0}_d} \, \max_{\vy\in\sR^n} \, \Big\{ \gL(\vx,\vy) = \vc^T\vx  + \vy^T\mA\vx  - \vy^T\vb\Big\}. 
\end{equation}
This problem has a  sharpness property that can be used to obtain linear convergence in first-order methods \cite{applegate2021faster}. 
For convenience, in the following, we define $\vw = (\vx, \vy), \hat{\vw} = (\vxh, \vyh), \tilde{\vw} = (\vxt, \vyt)$ and $\vw^* = (\vx^*, \vy^*)$. Meanwhile, for $\gamma >0,$ we denote the weighted  norm  $\|\vw\|_{(\gamma)} := \sqrt{\gamma\|\vx-\vx^*\|_2^2 + \frac{1}{\gamma}\|\vy - \vy^*\|_2^2}.$ Further, 
we use $\gW^*$ to denote the optimal solution set of the LP and define the distance to $\gW^*$ by $\text{dist}(\vw, \gW^*)_{(\gamma)} = \min_{\vw^*\in\gW^*}\|\vw-\vw^*\|_{(\gamma)}$. When $\gamma = 1$, $\|\cdot\|_{(\gamma)}$ is the standard Euclidean norm. 
Then based on \eqref{eq:pd-lp}, we can use the following classical LPMetric\footnote{In  \eqref{eq:pd-lp}, we dualize the constraint $\mA\vx = \vb$ by $\vy^T(\mA\vx-\vb)$ instead of $\vy^T(\vb - \mA\vx)$, so in our LPMetric, there exist a sign difference for $\vy$ from the more common representation such as the one in \cite{applegate2021faster}.} to measure the progress of iterative algorithms for LP:
\begin{align}
&\text{LPMetric}(\vx, \vy) \nonumber \\ 
=\,&  \sqrt{ \|\max\{-\vx, \vzero\}\|_2^2 + \|\mA\vx - \vb\|_2^2 + \|\max\{-\mA^T\vy -\vc, \vzero\}\|_2^2 + |\max\{\vc^T\vx + \vb^T\vy, 0\}|^2}, \label{eq:lpmetric} 
\end{align}
which can be explicitly and directly computed. 
For the Euclidean case ($\gamma = 1$), it is well-known~\cite{hoffman2003approximate} that there exists a Hoffman constant $H_{1}$ such that 
\begin{align}
\text{LPMetric}(\vw) \ge H_{1}\text{dist}(\vw, \gW^*)_{(1)}.          
\end{align}
Using the equivalence of norms in finite dimensions, for general $\gamma >0,$ we can conclude that there exists another constant $H_{\gamma}$ (to which we refer as the generalized Hoffman's constant) such that%Then as all norms are equivalent up to a constant in finite dimension. When $\|\cdot\|$ denotes the weighted norm, then there exists a generalized Hoffman constant $H_{\gamma}>0$ such that 
\begin{align}
\text{LPMetric}(\vw) \ge H_{\gamma}\text{dist}(\vw, \gW^*)_{(\gamma)}.  \label{eq:LP-Metric}
\end{align}

Using Eq.~\eqref{eq:LP-Metric} and  Theorem~\ref{thm:clvr}, we  then obtain the following bounds for distance and LPMetric.  
% \todo{\sjw{In the proof of this theorem you use a very specific definition of $\vw^*$, so I changed the statement of the theorem to reflect this.} \cb{Thanks}}
%Then we have the following Proposition \ref{prop:LPMetric}. 
\begin{restatable}{theorem}{propLPMetric}\label{thm:LPMetric}
% \sjw{I mentioned in the statement that we are considering standard-form LP here. ALso in the lemmas in the appendix.}
Consider the \clvr~algorithm applied to the standard-form LP problem \eqref{eq:pd-lp}, with input $\vw_0$ and output $\vwt_k$. 
Given $\gamma>0$, define $\vw^* = \arg\min_{w \in \gW^*} \, \|\vw_0-\vw\|_{(\gamma)}$,  and define  $C_0 = \gamma + 1/\gamma +    (\sqrt{2} + 1) \|\vw_0 - \vw^*\|_{(\gamma)} + \|\vw^*\|_{(\gamma)}.$ 
Then for $H_{\gamma}$ defined as in \eqref{eq:LP-Metric}, we have 
% \begin{align}
% \E[\emph{dist}(\tilde{\vw}_k, \gW^*)       ] \le\;&   \frac{3 \hat{L}m(\gamma + 1/\gamma + \tau)}{H_{\gamma}  k} \emph{dist}(\vw_0, \gW^*)   \nonumber   \\
%  \E[\emph{LPMetric}(\tilde{\vw}_k)] \le\;&    \frac{3 \hat{L}m(\gamma + 1/\gamma + \tau)}{H_{\gamma}  k} \emph{LPMetric}(\vw_0).  
% \end{align}
\begin{align*}
\E\Big[\sqrt{\emph{dist}(\tilde{\vw}_k, \gW^*)_{(\gamma)}}\;\Big] \le\;&  5\sqrt{\frac{\hat{L}m C_0}{H_{\gamma}k}}\sqrt{\emph{dist}(\vw_0, \gW^*)_{(\gamma)}}, \\
\E\big[\sqrt{\emph{LPMetric}(\tilde{\vw}_k)}\big] \le\;&  5\sqrt{\frac{\hat{L}m C_0}{H_{\gamma}k}}\sqrt{\emph{LPMetric}(\vw_0)}.   
\end{align*}
\end{restatable}
As a result, by Theorem \ref{thm:LPMetric}, if we know the values of $\hat{L}, \|\vw^*\|_{(\gamma)}$ and $H_{\gamma}$, then by setting $k=\frac{100\hat{L}m C_0}{H_{\gamma}k}$, we can halve the square root of the distance and the LPMetric in expectation. Thus we can obtain linear convergence if we restart the CLVR algorithm after a fixed number of iterations. However, the values of $\|\vw^*\|_{(\gamma)}$ and $H_{\gamma}$ are often unknown and thus make this strategy unrealistic in practice.

Compared with the above fixed restart strategy, a natural strategy is to restart whenever the LPMetric halves (summarized in Algorithm~\ref{alg:restarted-CLVR} in the appendix). Since LPMetric is easy to monitor and update, implementation of this strategy is straightforward. However, bounding the number of iterations required to halve the metric (in expectation or with high probability) seems nontrivial. What can be said (based on Theorem~\ref{thm:LPMetric} and denoting by $K$ the number of iterations on \clvr~between restarts)  is that $\sP[K > \frac{50 \hat{L}m C_0}{\delta^2 H_{\gamma}}] \leq \delta.$ This follows by Markov inequality, as $\sP[K > k] = \sP\Big[\sqrt{\mathrm{LPMetric}(\tilde{\vw}_k)} > \sqrt{\frac{\mathrm{LPMetric}(\vw_0)}{2}}\Big] \leq 5\sqrt{2\frac{\hat{L}m C_0}{H_\gamma k}}.$ We provide a comparison between the adaptive restart scheme proposed in~\citep{applegate2021faster} and our proposed adaptive restart scheme in Section~\ref{subsec-restart_schemes_comparisons} to demonstrate its practical competitiveness. Although we use adaptive restart in our experiments, we defer its convergence analysis to future work. Finally, as an independent and parallel work to ours, \cite{lu2021nearly} proposed a high probability guarantee for scheduled restart for stochastic extragradient-type methods.

% We note that $C_0$ can be bounded deterministically, using the sharpness property from Eq.~\eqref{eq:LP-Metric} (see Proposition~\ref{claim:restart-constant} in the appendix). 

\section{Application: DRO}\label{sec:dro}

Consider sample vectors $\{\va_1, \va_2, \ldots, \va_n\}$ with labels $\{b_1, b_2, \ldots, b_n\}$, where $b_i\in\{1, -1\}\,(i\in[n])$. The DRO problem with $f$-divergence based ambiguity set is
\begin{align}
\min_{\vx\in\gX}\sup_{\vp\in \gP_{\rho, n}} \sum_{i=1}^n p_i g(b_i\va_i^T\vx),    \label{eq:dro-f}
\end{align}
where  $\gP_{\rho,n} = \big\{\vp\in\sR^n: \sum_{i=1}^n p_i = 1, p_i\ge 0\, (i\in[n]), D_f(\vp\|\mathbf{1}/n) \le \frac{\rho}{n}\big\}$ is the ambiguity set, $g$ is a convex loss function and $D_f$ is an $f$-divergence defined by $D_f(\vp\|\vq) = \sum_{i=1}q_if(p_i/q_i)$ with $\vp, \vq\in \big\{\vp\in\sR^n: \sum_{i=1}^n p_i = 1, p_i\ge 0\big\}$ and $f$ being a convex function~\cite{namkoong2016stochastic}.  
The formulation \eqref{eq:dro-f} is a nonbilinearly coupled convex-concave min-max problem with constraint set $\gP_{\rho,n}$ for which efficient projections are not available in general. 
When $g$ is a nonsmooth loss (e.g., the hinge loss), many well-known methods such as the  extragradient~\cite{korpelevich1976extragradient,nemirovski2004prox} cannot be used even if we could project onto $\gP_{\rho,n}$ efficiently. 
However, by introducing auxiliary variables, additional linear constraints, and simple convex constraints, we can make the interacting term between primal and dual variables bilinear, as shown next.
(See Appendix~\ref{appx:dro-proof} for a proof.)
% whose proof is provided in Appendix~\ref{appx:dro-proof}. %. In particular, for $\gP_{\rho,n}$, we can use Lagrangian duality to reformulate Eq.~\eqref{eq:dro-f} as an unconstrained problem w.r.t.~$\vp.$ Then taking derivative w.r.t.~$\vp.$, we get an equivalent problem of the form \eqref{eq:glp}, which is given in Theorem \ref{thm:f}. 
% \cb{I should say equivalence to what? }
%
% \todo{\sjw{The proof uses Sion's minimimax theorem --- but this theorem seems to require $\gX$ to be convex compact. Do you need to add this assumption $\gX$ to the theorem?}}
% \todo{\sjw{$g$ and $g_i$ seem to be used almost interchangeably throughout the statement and proof, and the rest of the appendix B. I (carefully) got rid of all the $g_i$s. }}
%%\vspace{-3mm}
\begin{restatable}{theorem}{thmfdivergence}\label{thm:f}
Let $\gX$ be a compact convex set. Then the DRO problem in Eq.~\eqref{eq:dro-f} is equivalent to %the following problem: 
\begin{equation} \label{eq:thm:f}
\begin{aligned}
\min_{\vx, \vu,  \vv, \vw, \vmu, \vq, \gamma} \; &\; \Big\{\gamma  + \frac{\rho\mu_1}{n} + \frac{1}{n}\sum_{i=1}^n  \mu_i f^*\Big(\frac{q_i}{\mu_i}\Big)  \Big\}&&  \\
\st\;& \;   \vw + \vv - \frac{\vq}{n} - \gamma\mathbf{1}_n = \vzero_n, && \\
\;& u_i =  b_i\va_i^T\vx,  && i\in[n]                    \\
&\;   \mu_1 = \mu_2=\cdots = \mu_n ,     &&              \\
&\;      g(u_i) \le w_i,  && i\in[n]                           \\
&\;    q_i\in \mu_i \dom(f^*),  && i\in[n]              \\
&\;     v_i\ge 0, \; \mu_i\ge 0,        &&            i\in[n]     \\
&\;      \vx\in\gX.
\end{aligned}\nonumber
\end{equation}
\end{restatable}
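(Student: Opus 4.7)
The plan is to derive \eqref{eq:thm:f} in two stages: first, dualize the inner supremum over $\vp\in\gP_{\rho,n}$ via convex-conjugate duality, obtaining an infimum over two scalar Lagrange multipliers involving the perspective of $f^*$; second, introduce the auxiliary variables $\vu,\vw,\vq,\vv,\vmu$ to linearize the residual nonlinearities, so the resulting problem fits the \eqref{eq:glp} template.

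\textbf{Step 1 (dualize the inner sup).} Expand the divergence constraint as $\sum_i (1/n) f(np_i)\le \rho/n$ and form the Lagrangian of the inner problem $\sup_\vp \sum_i p_i g_i(b_i\va_i^T \vx)$, using a multiplier $\gamma\in\sR$ for the simplex equality $\vone^T\vp=1$ and a nonnegative multiplier for the divergence inequality, while keeping $\vp\ge\vzero$ as an explicit constraint. Slater's condition holds at $\vp=\vone/n$ whenever $\rho>0$, so strong duality applies. For each fixed $\vx$ the inner maximization decouples across $i$; substituting $t_i=np_i$ and using $f^{**}=f$ identifies each per-index supremum as a (closed) perspective of $f^*$. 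After rescaling the divergence multiplier to absorb the factors of $n$, the inner supremum equals
\begin{equation*}
\inf_{\gamma\in\sR,\,\mu\ge 0}\Big\{\gamma + \tfrac{\rho\mu}{n} + \tfrac{1}{n}\sum_{i=1}^n \mu\,f^*\!\Big(\tfrac{L_i(\vx,\gamma)}{\mu}\Big)\Big\},
\end{equation*}
where $L_i(\vx,\gamma)$ is affine in $\gamma$ and in $g_i(b_i\va_i^T\vx)$.

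\textbf{Step 2 (introduce auxiliary variables).} The remaining nonlinearities are the composition $g_i(b_i\va_i^T\vx)$ and the perspective $\mu f^*(\cdot/\mu)$. I introduce $u_i = b_i\va_i^T\vx$ (so the interaction with $\vx$ becomes a single linear equality), and $w_i$ with $w_i\ge g_i(u_i)$ via the standard epigraph trick (tight at any minimizer). I then introduce $q_i$ together with a nonnegative slack $v_i$, tied by the affine equality $w_i + v_i - q_i/n - \gamma = 0$, so that $q_i$ carries the (scaled) argument of $f^*$; the constraint $q_i\in \mu_i\dom(f^*)$ keeps the perspective $\mu_i f^*(q_i/\mu_i)$ finite. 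Finally, I duplicate the scalar multiplier $\mu$ as a vector $\vmu\in\sR^n$ with the consensus constraint $\mu_1=\mu_2=\cdots=\mu_n$ and $\vmu\ge\vzero$, so that the objective naturally takes the separable form displayed in \eqref{eq:thm:f}. Substituting and minimizing over all the auxiliary variables recovers \eqref{eq:thm:f}.

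\textbf{Main obstacle.} The delicate point is verifying that introducing the slack $v_i\ge 0$ and treating $q_i$ as an independent variable does not enlarge the optimal value: at every minimizer one must check that the slack collapses ($v_i=0$) and the epigraph is tight ($w_i=g_i(u_i)$), so that the reformulation reduces to the scalar dual derived in Step~1. This uses monotonicity of the perspective of $f^*$ along the direction generated by the slack, which is standard for the $f$-divergences of interest (KL, $\chi^2$, and more generally any $f$ convex with $\dom f\subseteq[0,\infty)$). Secondary concerns are the careful justification of strong duality when $D_f$ may equal $+\infty$ (as for KL), and tracking the precise constants and scalings so that no hidden factor of $n$ or sign is introduced; both are routine conjugate calculus once the structure above is in place.
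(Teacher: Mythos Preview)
Your approach is correct and follows the same Lagrangian-duality blueprint as the paper, but you make two organizational choices that differ from theirs and force you to do a little extra work.

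First, the paper introduces the epigraph variables $u_i=b_i\va_i^T\vx$ and $w_i\ge g(u_i)$ \emph{before} dualizing, applying Sion's theorem twice so that the inner supremum becomes the purely linear problem $\sup_{\vp\in\gP}\vp^T\vw$. You instead dualize first with $g_i(b_i\va_i^T\vx)$ still present and introduce $w_i\ge g_i(u_i)$ afterward, which is why you must later argue that the epigraph is tight at the optimum.

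Second, and more importantly, the paper also dualizes the constraint $\vp\ge\vzero$: the variable $\vv\ge\vzero$ in the theorem is exactly the Lagrange multiplier for $\vp\ge\vzero$. With all three constraints dualized, the supremum is over $\vp\in\sR^n$, and taking it simply \emph{forces} the linear equality $\vw+\vv-\vq/n-\gamma\mathbf{1}_n=\vzero$; no slack is introduced and no monotonicity argument is needed. You instead keep $\vp\ge\vzero$ explicit and then insert $\vv$ as an artificial slack, which obliges you to invoke monotonicity of $f^*$ (equivalently $\dom f\subseteq[0,\infty)$) to show the slack collapses. Your argument is valid under that hypothesis, which indeed holds for all standard $f$-divergences, but the paper's route sidesteps it entirely. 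Your caution about tracking the factors of $n$ is well placed; matching the precise scaling of $q_i$ in the statement does require care.
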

% (See Appendix~\ref{appx:dro-proof} for a proof.)
%
% \jd{Should remember to comment about the domain of a 1D function being an interval and state what notation $q_i\in \mu_i \dom(f^*)$ means.}

\vspace{-2mm}

In Theorem~\ref{thm:f}, the domain of the one-dimensional convex function $f^*(\cdot)$ is  an interval such as $[a, b]$, so that $q_i\in \mu_i \dom(f^*)$ denotes the inequality  $\mu_i a \le q_i \le \mu_i b$. 
% In Theorem~\ref{thm:f}, as
Since the perspective function $\mu f^*\big(\frac{q}{\mu}\big)$ is a simple convex function of two variables, we can assume that the proximal operator for this function on the domain $\{(\mu, q):  q\in \mu \dom(f^*), \mu >0\}$ can be computed efficiently~\cite{boyd2004convex}.  
Similarly, we can assume that the constraint $g(u)\le w$ admits an efficiently computable projection operator. 
As a result, the formulation \eqref{eq:dro-f} can be solved by \clvr.
When expressing \eqref{eq:dro-f} in the form of  \eqref{eq:pd-glp}, the primal and dual variable vectors have dimensions $d+1 + 4n$ and $3n-1$, respectively. 
However, according to Table~\ref{tb:general-result}, provided that $\gX$ is coordinate separable, the overall complexity of \clvr~will only be $O\big( \frac{(\mathrm{nnz}(\mA) + n)(R+1)}{\epsilon} \big)$. 
% \todo{\jd{$R$ used again to mean something else (see Prop 1)}      \cb{Yes. Will change it.}}

The original DRO problem with Wasserstein metric based ambiguity set is an \emph{infinite}-dimensional \emph{nonbilinearly} coupled convex-concave min-max problem defined by 
%
%\vspace{-1mm}
\begin{equation} 
\min_{\vw\in\sR^d}\sup_{\sP\in\gP_{\rho,\kappa}} \E^{\sP}[g(b\va^T \vw)],  \label{eq:wass}
\end{equation}
%
%\vspace{-1mm}
where $\va\in\sR^d, b\in\{1,-1\},$ $\sP$ is a distribution on $\sR^d\times \{1, -1\}$, $g$ is a convex loss function and $\gP_{\rho,\kappa}$ is the Wasserstein metric-based ambiguity set  \cite{shafieezadeh2015distributionally}. 
Our reformulation for Eq.~\eqref{eq:wass} is in Appendix~\ref{app:dro-wass}.

%%%%%%%%%%%%%%%%%%%%%%%%%%%%
%\vspace{-0.2cm}
\section{Numerical experiments} \label{sec:num-experiments-discussion}

We provide experimental evaluations of our algorithm for the reformulation of the  DRO with Wasserstein metric based on the $\ell_1$-norm (with $\kappa = 0.1$ and  $\rho = 10$) and hinge loss. 
For its LP formulation (see Theorem~\ref{thm:dro-wass} in the Appendix), we compare our \clvr~method with three representative methods: \textsc{pdhg}~\cite{chambolle2011first}, \textsc{spdhg}~\cite{chambolle2018stochastic} and \textsc{pure-cd}~\cite{alacaoglu2020random}. For all  algorithms we use LPMetric~\eqref{eq:lpmetric} as the performance measure and use a restart strategy based on successive halving of LPMetric (Section~\ref{sec:restart}) to obtain linear convergence. 
We implemented \textsc{clvr} and other algorithms in \href{https://julialang.org}{Julia}, optimizing all implementations to the extent possible. 
Full details of the experimental setup can be found in Appendix~\ref{appx:experiments-details}. 
Our code is available at~\url{https://github.com/ericlincc/Efficient-GLP}.

\paragraph{Comparison between values of $L$ and $R$.}%\label{subsec:comparison-L-and-R}

As described in Section~\ref{sec:intro}, a major advantage of \clvr~is that the complexity of \clvr~depends on the max row norm $R$ instead of the spectral norm $L$, which in the worst case for ill-conditioned problems can lead to a factor of $\sqrt{n}$ improvement. 
In practical problems where the problem instances are highly structured (e.g., reformulated DRO problems), $R$ can be much smaller than $L$. Table~\ref{table:L-values} provides empirical evidence for this claim. 
In all our experiments, we normalize each rows of $\mA$ to $R = 1$ as stated in Assumption~\ref{ass:R-L}, so the values of $L$ demonstrate the theoretical improvements for the experiments described in Section~\ref{sec:num-experiments-discussion}.

\begin{table*}[ht!] 
\centering
\begin{threeparttable}[b]
\begin{small}
\caption{Values of the spectral norm $L$ in the reformulated DRO problems with Wasserstein metric after each row is normalized to $R=1$.}
\tabcolsep=0.1cm %\fi
\begin{tabular}{|c|c|c|c|}
\hline
  \text{Reformulated a9a} 	& \text{Reformulated gisette}	& \text{Reformulated rcv1} 	& \text{Reformulated news20}  \\ 
    $d = 130738, n = 97929$	&  $d = 44002, n = 28000$	& $d = 269914, n = 155198$	& $d = 5500750, n = 2770370$\\ \hline
  117.3 & 65.9 & 196.4 & 1041.6  \\ \hline
  \end{tabular}\label{table:L-values}
  \end{small}
%\vspace{0.1in}
\end{threeparttable}
\end{table*}

% \noindent\textbf{Comparison with Primal-Dual Algorithms.}\hspace{1em}
\paragraph{Comparison with primal-dual algorithms.}

\begin{figure*}[ht!]
\captionsetup[subfigure]{labelformat=empty}
    \centering
    %\vspace{-10pt}
    \subfloat{{\includegraphics[width=0.31\textwidth]{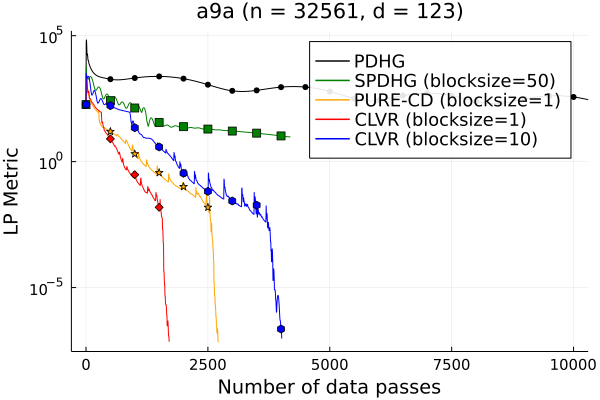} }\label{fig:a9a-datapass}}%
    \hspace{\fill}
    \subfloat{{\includegraphics[width=0.31\textwidth]{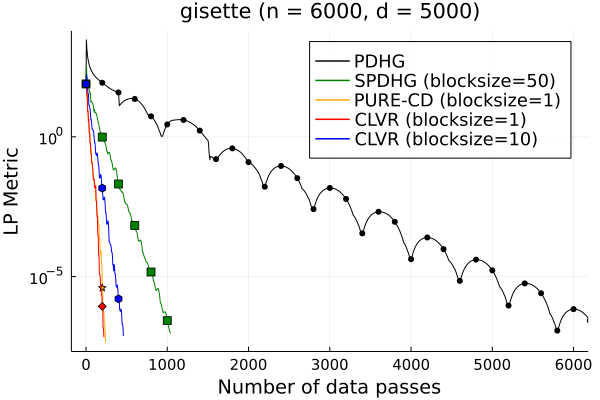} }\label{fig:gisette-datapass}}%
    \hspace*{\fill}
    \subfloat{{\includegraphics[width=0.31\textwidth]{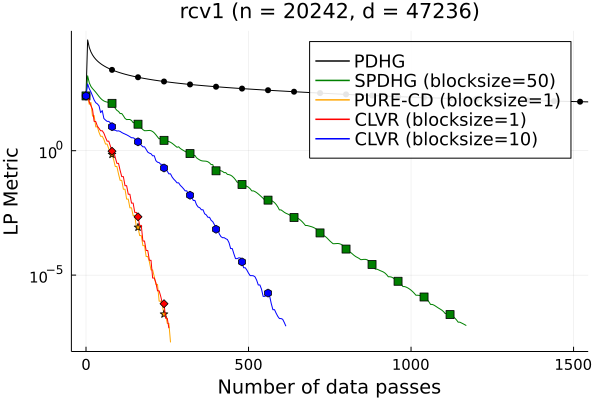} }\label{fig:rcv1-datapass}}%
% \caption{Comparison of numerical performance in terms of data passes.}%
    % \label{fig:datapass}%
    
    \hspace*{\fill}
    %\vspace{-10pt}
    
    \subfloat{\includegraphics[width=0.31\textwidth]{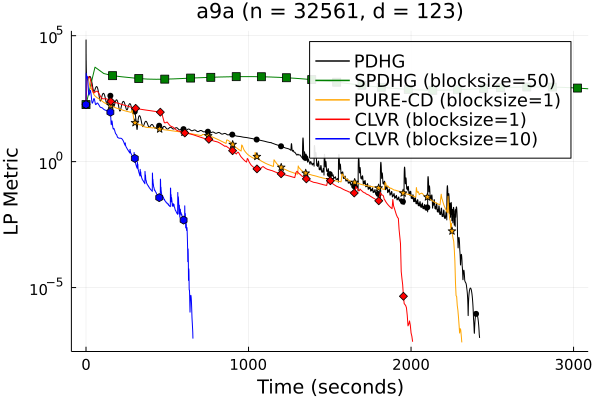}\label{fig:a9a-time}}%
    \hspace*{\fill}
    \subfloat{{\includegraphics[width=0.31\textwidth]{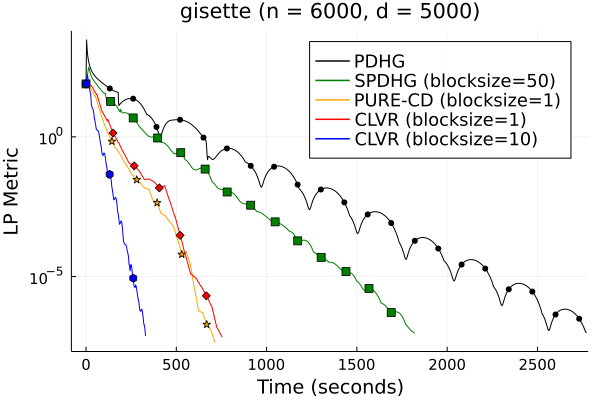}}\label{fig:gisette-time}}
    \hspace*{\fill}
    \subfloat{{\includegraphics[width=0.31\textwidth]{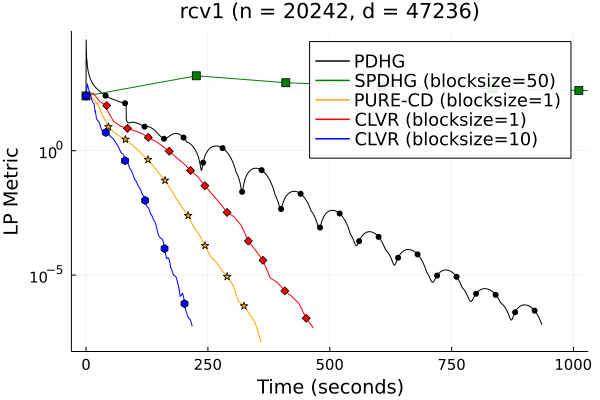} }\label{fig:rcv1-time}}%
    \caption{Comparison of numerical results in terms of number of data passes and wall-clock time.}%
    % \label{fig:time}%
    \label{fig:plots}
    %\vspace{-3mm}
\end{figure*}

Figure~\ref{fig:plots} provides a comparison between algorithms in terms of the number of data passes and wall-clock time. The spikes in all the plots are due  to restarts: At the beginning of each restart cycle, the value of LPMetric increases significantly, then decreases rapidly. 
For the number of data passes (top row), \clvr~with block size $1$ and \textsc{pure-cd} perform best on all three datasets, \clvr~with block size 10 and \textsc{spdhg} with block size 50 have second-tier performance, and \textsc{pdhg} is worst. 
For the \clvr~algorithm, smaller block size corresponds to smaller $\hat{L}$ in  Assumption~\ref{ass:L-hat}, which corresponds to better complexity in terms of data passes by Theorem~\ref{thm:clvr}. 
Nevertheless, the gap between empirical performance and theoretical guarantee for \textsc{spdhg} and \textsc{pure-cd} deserves further research because, to date, they have only been shown to have the same iteration complexity as \textsc{pdhg}. 
\footnote{The later paper \cite{alacaoglu22a} describes complexity results for a newly developed version of \textsc{PURE-CD} that exploits sparsity in $\mA$.}
Empirically, on \texttt{a9a}, \clvr~with block size $1$ performs better than \textsc{pure-cd} in terms of data passes. 

%Note that all plots contain spikes due to the restart strategy.
%At the beginning of each cycle, the value of LPMetric increases significantly, then decreases rapidly.% thereafter.

In terms of  wall-clock time (bottom row of Figure~\ref{fig:plots}), because of different per-iteration costs of each algorithm and instruction-level parallelism in modern processors~\cite{hennessy2011computer}, the plots  differ significantly from the plots for  number of data passes. 
Even with block size $50$, \textsc{spdhg} spends the most wall-clock time for one data pass and is the slowest on sparse datasets \texttt{a9a} and  \texttt{rcv1}, but is faster than \textsc{pdhg} on the dense dataset \texttt{gisette}. Meanwhile, while \clvr~with block size $10$ is not best in terms of data passes, it remains fastest in terms of wall-clock time on all datasets due to cheaper per-iteration cost and instruction-level parallelism. 
On \texttt{rcv1}, the per-iteration cost of \textsc{pure-cd} is about $60\%$ of that of \clvr~with block size $1$. Hence, despite having similar performance in terms of data passes, \textsc{pure-cd} is faster than \clvr~with block size $1$, but is still slower than \clvr~with block size $10$.

% \noindent\textbf{Comparison with Off-the-shelf Solvers.}\hspace{1em}
\paragraph{Comparison with production linear programming solvers.}

Table~\ref{table:solvers} shows that \clvr~is competitive against production-quality linear programming solvers such as GLPK~\cite{glpk} and Gurobi~\cite{gurobi}. 
We observe that \clvr~reached accurate solutions significantly faster than GLPK and Gurobi in the reformulated problems with \texttt{gisette} and \texttt{rcv1} datasets. Although \clvr~is much slower than Gurobi(barrier) on \texttt{a9a} dataset, we believe that much of the performance gap in this case is due to the redundancy in the problem formulation with the \texttt{a9a} dataset, much of which is removed by Gurobi presolver\footnote{In our DRO instance with \texttt{a9a} dataset, Gurobi presolver removed 25\% of the columns and 58\% of the nonzeros.}. We leave presolving and other heuristic speedups of \clvr~for future work. 

\begin{table*}[ht!] 
\centering
\begin{threeparttable}[b]
\begin{small}
\caption{Comparison of numerical results between \clvr~and three production solvers for linear programming, showing time required (in seconds) for each solver to reach accuracy $10^{-8}$.}
% %\vspace{0.1mm}
%\ificml\else 
\tabcolsep=0.1cm %\fi
\begin{tabular}{|c|c|c|c|}
\hline
  \text{Time (seconds)}	& \text{Reformulated a9a} 	& \text{Reformulated gisette}	& \text{Reformulated rcv1} \\ 
      			& $d = 130738, n = 97929$	&  $d = 44002, n = 28000$	& $d = 269914, n = 155198$	\\ \hline
%   \text{PDHG} 		& $2422$			& $2772$ 			& $935$			\\ \hline
%   \text{SPDHG} 		& $>4\times10^4$ 	& $1820$			& $3.7\times10^4$	\\ \hline
  \text{JuMP+GLPK} 	& $899$ 	& $>4\times10^4$ 	& $>4\times10^4$	\\ \hline
  \text{JuMP+Gurobi(simplex)}& $893$ 	& $2482$ 	& $7008$	\\ \hline
  \text{JuMP+Gurobi(barrier)}& $\mathbf{26}$ 	& $1039.7$ 	& $1039.5$	\\ \hline
  \text{CLVR} 		& $962$ 			& $\mathbf{697}$ 	& $\mathbf{582}$ \\
  \hline
  \end{tabular}\label{table:solvers}
  \end{small}
%\vspace{0.1in}
\end{threeparttable}
\end{table*}

\paragraph{Conclusion.}

Our preliminary numerical experiments show that \clvr~is fastest in both the number of data passes and wall-clock time on considered datasets, among all primal-dual algorithms that we implemented.
It is also competitive with production-quality linear programming solvers. 
Since it has a theoretical guarantee that matches or improves the state of the art among primal-dual methods, we believe that \clvr~could be a method of choice.
% In the reformulated a9a problem, while \clvr~performed the best amount all primal-dual algorithms, solvers such as GLPK and Gurobi reached accurate solutions faster than \clvr, potentially due to presolving and preconditioning which are not implemented in \clvr.

\ifarxiv
    \section*{Acknowledgments and Disclosure of Funding}
    
    CS was supported in part by the NSF grant 2023239. 
    JD and CYL acknowledge support from the NSF award 2007757. 
    JD was also supported by the Office of Naval Research under contract number N00014-22-1-2348 and the Wisconsin Alumni Research Foundation. 
    SW was supported by NSF grants 2023239 and 2224213, the DOE under subcontract 8F-30039 from Argonne National Laboratory, and the AFOSR under subcontract UTA20-001224 from UT-Austin.
    Part of this work was done while JD, CS, and SW were visiting the Simons Institute for the Theory of Computing.
\else
    \begin{ack}
    
    CS was supported in part by the NSF grant 2023239. 
    JD and CYL acknowledge support from the NSF award 2007757. 
    JD was also supported by the Office of Naval Research under contract number N00014-22-1-2348 and the Wisconsin Alumni Research Foundation. 
    SW was supported by NSF grants 2023239 and 2224213, the DOE under subcontract 8F-30039 from Argonne National Laboratory, and the AFOSR under subcontract UTA20-001224 from UT-Austin.
    Part of this work was done while JD, CS, and SW were visiting the Simons Institute for the Theory of Computing.
    
    \end{ack}
\fi

% Bibliography

\bibliography{ref.bib}
\bibliographystyle{abbrvnat}

%%%%%%%%%%%%%%%%%%%%%%%%%

% NeurIPS checklist
\ifarxiv
    % Pass
\else
    \section*{Checklist}

    % %%% BEGIN INSTRUCTIONS %%%
    % The checklist follows the references.  Please
    % read the checklist guidelines carefully for information on how to answer these
    % questions.  For each question, change the default \answerTODO{} to \answerYes{},
    % \answerNo{}, or \answerNA{}.  You are strongly encouraged to include a {\bf
    % justification to your answer}, either by referencing the appropriate section of
    % your paper or providing a brief inline description.  For example:
    % \begin{itemize}
    %   \item Did you include the license to the code and datasets? \answerYes{See Section~\ref{gen_inst}.}
    %   \item Did you include the license to the code and datasets? \answerNo{The code and the data are proprietary.}
    %   \item Did you include the license to the code and datasets? \answerNA{}
    % \end{itemize}
    % Please do not modify the questions and only use the provided macros for your
    % answers.  Note that the Checklist section does not count towards the page
    % limit.  In your paper, please delete this instructions block and only keep the
    % Checklist section heading above along with the questions/answers below.
    % %%% END INSTRUCTIONS %%%

    \begin{enumerate}

    \item For all authors...
    \begin{enumerate}
      \item Do the main claims made in the abstract and introduction accurately reflect the paper's contributions and scope?
        \answerYes{All theoretical claims are supported by proofs, which, due to space constraints, are provided in the appendix. Numerical results are provided in Section~\ref{sec:num-experiments-discussion}, with full technical details of the experiments provided in Appendix~\ref{appx:experiments-details}. } 
      \item Did you describe the limitations of your work?
        \answerYes{The three main examples are: (i) the discussion of lazy update strategy and guarantees for $\hat{\vx}_k$ versus $\tilde{\vx}_k$ in Appendix~\ref{appx:lazy-clvr}; (ii) the discussion of linear convergence under the restart strategy and theoretical guarantees for scheduled restart versus empirical guarantees for adaptive restart in Section~\ref{sec:restart}; and (iii) comparison to off-the-shelf solvers in Section~\ref{sec:num-experiments-discussion} and use of preconditioning.}  
      \item Did you discuss any potential negative societal impacts of your work?
        \answerNA{This work concerns a generic solver for generalized linear programs and as such is not tied to any specific application that could have a potential negative societal impact.} 
      \item Have you read the ethics review guidelines and ensured that your paper conforms to them?
        \answerYes{}
    \end{enumerate}

    \item If you are including theoretical results...
    \begin{enumerate}
      \item Did you state the full set of assumptions of all theoretical results?
        \answerYes{See assumptions in Section~\ref{sec:prelims}.}
            \item Did you include complete proofs of all theoretical results?
        \answerYes{See Appendix~\ref{appx:proofs-general} and \ref{appx:dro-proof} for complete proofs.}
    \end{enumerate}

    \item If you ran experiments...
    \begin{enumerate}
      \item Did you include the code, data, and instructions needed to reproduce the main experimental results (either in the supplemental material or as a URL)?
        \answerYes{We include our code in the supplementary material.}
      \item Did you specify all the training details (e.g., data splits, hyperparameters, how they were chosen)?
        \answerYes{In Section~\ref{sec:num-experiments-discussion} and Appendix~\ref{appx:experiments-details}.}
            \item Did you report error bars (e.g., with respect to the random seed after running experiments multiple times)?
        \answerNo{Error bars are not reported due to almost identical behaviors and results from the large number of stochastic iterations.}
            \item Did you include the total amount of compute and the type of resources used (e.g., type of GPUs, internal cluster, or cloud provider)?
        \answerYes{In Appendix~\ref{appx:experiments-details}.}
    \end{enumerate}

    \item If you are using existing assets (e.g., code, data, models) or curating/releasing new assets...
    \begin{enumerate}
      \item If your work uses existing assets, did you cite the creators?
        \answerYes{We used datasets from LibSVM, which is a open source machine learning library and cited the creators~\cite{chang2011libsvm} in Appendix~\ref{appx:experiments-details} which provides full experimental details.}
      \item Did you mention the license of the assets?
        \answerNA{LibSVM datasets are distributed under the BSD license which is ``a low restriction type of license for open source software that does not put requirements on redistribution.'' }
      \item Did you include any new assets either in the supplemental material or as a URL?
        \answerYes{Our code is provided in the supplementary material.}
      \item Did you discuss whether and how consent was obtained from people whose data you're using/curating?
        \answerNA{As mentioned before, the datasets we use are open source.}
      \item Did you discuss whether the data you are using/curating contains personally identifiable information or offensive content?
        \answerNA{}
    \end{enumerate}

    \item If you used crowdsourcing or conducted research with human subjects...
    \begin{enumerate}
      \item Did you include the full text of instructions given to participants and screenshots, if applicable?
        \answerNA{}
      \item Did you describe any potential participant risks, with links to Institutional Review Board (IRB) approvals, if applicable?
        \answerNA{}
      \item Did you include the estimated hourly wage paid to participants and the total amount spent on participant compensation?
        \answerNA{}
    \end{enumerate}

    \end{enumerate}
\fi

%%%%%%%%%%%%%%%%%%%%%%%%%

% Appendix Setup
\clearpage
\appendix

% \section{Appendix}
% You may include other additional sections here
% \section{Omitted Proofs and Algorithm Description from Section \ref{sec:clvr}}
% \begin{algorithm*}[t!]
% \caption{Coordinate Linear Variance Reduction}\label{alg:clvr}
% \begin{algorithmic}[1]
% \STATE \textbf{Input: }  $\vx_0=\vx_{-1}\in\gX, \vy_0 = \vz_0= \vzero$, $(\vu, \vv)\in \gX \times \sR^n$, $m, \{S^{1}, S^{2}, \ldots, S^{m}\},  K, \gamma>0, R >0.$
% \STATE $\phi_0(\cdot) = \frac{ \gamma}{2}\|\cdot - \vx_0\|^2, \psi_{0}(\cdot) = \frac{1}{2\gamma}\|\cdot - \vy_0\|^2$.
% \STATE $a_0 =   A_0 =  0.$
% \FOR{$k = 1,2,3,\ldots, K$}
% \STATE $a_k = \frac{\sqrt{1 + \sigma A_{k-1}/\gamma}}{2Rm}, A_k = A_{k-1} + a_k.$
% \STATE $\bar{\vx}_{k-1} =   \vx_{k-1} + \frac{a_{k-1}}{a_k}(\vx_{k-1}  -  \vx_{k-2}).$
% \STATE Pick $j_k$ uniformly at random in $[m].$
% \STATE $\vy_{k} = \argmin_{\vy\in\sR^{{n}}}\{  \psi_k(\vy) =  \psi_{k-1}(\vy) +   m a_k(-
% \langle \vy^{(j_k)} - \vv^{(j_k)}, \mA^{(j_k)}\bar{\vx}_{k-1} -  \vb^{(j_k)}\rangle)\}$. 
% \STATE $\vg_k = \vz_{k-1} + m \mA^{(j_k),T}\big(\vy_{k}^{(j_k)} - \vy_{k-1}^{(j_k)}\big).$
% \STATE $\vx_{k} = \argmin_{\vx\in\gX}\{  \phi_k(\vx) =  \phi_{k-1}(\vx) + a_k (\langle {\vx} - \vu, \vg_k + \vc \rangle+ r(\vx) - r(\vu) )\}.$ \label{ln:vrpda-x_k-comp}
% \STATE  $\vz_k = \vz_{k-1} + \mA^{(j_k),T}(\vy_{k}^{(j_k)} - \vy_{k-1}^{(j_k)}).$
% \ENDFOR
% \STATE \textbf{return } $\tilde{\vx}_K := \frac{1}{K}\sum_{i=1}^K  \vx_i$. 
% \end{algorithmic}%\label{alg:vrpda}	
% \end{algorithm*} 

{\centering{\LARGE\bfseries Coordinate Linear Variance Reduction\\[5pt]  for Generalized Linear Programming}

  %\vspace{1em}
  \centering{{\LARGE\bfseries Appendix}}

}

\paragraph{Outline.} The appendix of this paper is organized as follows:
%\vspace{-2mm}
\begin{itemize}
    \item Section~\ref{appx:lazy-clvr} contains details and the pseudo code for \clvr~with lazy update. %\vspace{-2mm}
    \item Section~\ref{appx:proofs-general} provides the proofs for Section~\ref{sec:clvr}. %\vspace{-2mm}
    \item Section~\ref{appx:dro-proof} provides the proofs for Section~\ref{sec:dro}. %\vspace{-2mm}
    \item Section~\ref{appx:experiments-details}  provides the complementary details for Section \ref{sec:num-experiments-discussion}. %\vspace{-2mm}
\end{itemize}
%\vspace{-2mm}

\section{Lazy update for sparse and structured instances of \eqref{eq:pd-glp}} \label{appx:lazy-clvr}

In Algorithm~\ref{alg:clvr-impl}, for dense $\mA$, the $O(|S^{j_k}|d)$ cost of Steps~6 and 8 dominates the $O(d)$ cost of Steps~4 and 9. 
However, when $\mA$ is sparse and $|S^{j_k}|$ is small, the cost of Steps 6 and 8  will be $O(\mathrm{nnz}(\mA^{S^{j_k}}))$, which may be less than the $O(d)$ cost of Steps 4 and 9. 
Using this observation, we show that the nature of the dual averaging update enables us to propose an efficient implementation whose complexity depends on $\mathrm{nnz}(\mA)$ rather than $n \cdot d$.

% In our reformulations for the problems in DRO and NNV, the sparsity of $\mA$ and the separability of primal variables widely exist. For these cases, it is possible to design algorithms with complexity depending on the number of nonzero elements of $\mA,$ not the ambient space. However, in Algorithm \ref{alg:clvr-impl}, the vanilla update of $\vq_k$ will have the $O(d)$ cost whatever the sparsity of $\mA$, as it  depends on the weighted sum of two dense vectors  $\vq_{k-1}$ and $\vz_{k-1}$. To avoid the issue, in this subsection, given the separability assumption of $\gX$ and the sparsity of $\mA$, we consider a lazy update strategy for primal variables to make the per-iteration cost proportional to nonzero elements of the block of $\mA.$

% \cb{Here we  also need to efficient update for $\tilde{\vy}_k.$}

Recall that we partition $[n]$ into subsets $\{S^1, S^2, \ldots, S^m\}$ and use $\mA^{S^j} (j\in[m])$ to denote the $j^\mathrm{th}$ row block of $\mA.$ 
For each block $\mA^{S^j}$, we use $C^j \subset [d]$ to denote the indices of those columns of $\mA^{S^j}$ that contain at least one nonzero element. 
(Of course, $\{C^1, \dotsc,C^m\}$ is not in general a partition of $[d]$ as different subsets $S^j$ may have nonzeros in the same columns.)
We assume further that $\gX$ and $r$ are coordinate separable, that is, $\gX = \gX_1\times\cdots \times \gX_d$ with $\gX_i\subset \sR$ for all $i$ and  $r(\vx) := \sum_{i=1}^d r(\vx^i)$ with $\vx^i\in\gX_i.$   

\iffalse%%%%%%%%%%%%%%%%%%%%%
In Step~4 of Algorithm~\ref{alg:clvr-impl}, due to the separability of $\gX$ and $r$, given an index $i\in[d]$, the update of $\vx_k^i$ does not influence the values of the other coordinates of $\vx_k.$ Meanwhile, as $\vq_{k-1}$ and $A_k$ uniquely determine the value of $\vx_k$, in implementation, we only need to track the values of $\vq_{k-1}$ and $A_k$, and compute $\vx_k$ once we really need it in the algorithm.   $\{A_k\}$ is a sequence of scalars, while $\vq_k$ admits a linear recursion relationship with the $O(d)$ cost from the weighted sum of the dense vectors $\vq_{k-1}$ and $\vz_k.$ Meanwhile, $\vz_k$ only differs from $\vz_{k-1}$ over the coordinates from $C^{j_k}$. Hence, consider the following situtaion: in the consecutive iterations from $k_1$ to $k_2$, the coordinate $i$ does not appear in all the coordinate sets from $C^{j_{k}}$,  $k \in \{k_1, \dots, k_2\}$. In particular, telescoping the recursion of Step~10 from $k_1$ to $k_2$,  $\vq_{k_2}^i (i\in[d])$ can be computed using the following ``shortcut'' update:
%
\begin{align}
 \vq_{k_2}^i = \begin{cases}
 \vq_{k_1}^i + (A_{k_2} - A_{k_0})(\vz_{k_1}^i + \vc^i) + m  a_{k_2}  (\vz_{k_2}^{C^{j_{k_2}}} - \vz_{k_2-1}^{C^{j_{k_2}}}), &            \text{if} \quad   i \in  C^{j_{k_2}}                 \\
 \vq_{k_1}^i + (A_{k_2} - A_{k_0})(\vz_{k_1}^i + \vc^i)        &      \text{if} \quad     i \notin  C^{j_{k_2}}             \\
 \end{cases},   
\end{align}
\fi %%%%%%%%%%%%%%%%%%%%%%%%%

In Step~4 of Algorithm~\ref{alg:clvr-impl}, the separability of $\gX$ and $r$ means that an update to one coordinate block of $\vx$ --- say the $C^{j_k}$ block in the update from $\vx_{k-1}$ to $\vx_k$, which requires a projection and an application of the proximal operator --- does not influence other coordinates $\vx_k^i$ for $i \notin C^{j_k}$. 
Moreover, we can efficiently maintain  an implicit representation of $\vq_k$, in terms of a newly introduced auxiliary vector $\vr_k$; see Lemma~\ref{lem:qk} of Appendix \ref{app:lazy}. Similarly, to update $\tilde{\vy}_k$ efficiently, we maintain an implicit representation of  $\tilde{\vy}_k$ via an auxiliary vector $\vs_k$, as shown in  Lemma~\ref{lem:yk} of Appendix \ref{app:lazy}.

It is generally not possible to maintain $\vxt_k$ efficiently since, in principle, all components of $\vx_k$ can change on every iteration, and we wish to avoid the $O(d)$ cost of evaluating every full $\vx_k$. 
We seek instead to output a proxy $\vxh_K$ for $\vxt_K$ from Algorithm~\ref{alg:clvr-impl} such that $\E[\vxh_K]=\vxt_K$. 
One possible choice is to  pick an index $k' \in [K]$ from the weighted discrete distribution $(\frac{a_1}{A_K}, \frac{a_2}{A_K},\ldots, \frac{a_K}{A_K})$ (computing the scalar quantities $a_k$ and $A_k$ for  $k\in[K]$ in advance), then setting $\hat{\vx}_K = \vx_{k'}$. 
A slightly more sophisticated strategy is to sample a predetermined number $\widehat{K}$ of vectors $\vx_k$, $k \in [K]$, and define $\hat{\vx}_K$ to be the simple average of these vectors. Once again, the indices are chosen from the weighted discrete distribution $(\frac{a_1}{A_K}, \frac{a_2}{A_K},\ldots, \frac{a_K}{A_K})$. 
Note that the total expected cost of the $K$ iterations of the algorithm (excluding the full-vector updates) is $O(K \mathrm{nnz}(\mA)/m)$, while the total cost of evaluating the $\widehat{K}$ full vectors $\vx_k$ and accumulating them into $\vxh_K$ is $O(\widehat{K}d)$.
Thus, for the full-step iterations not to dominate the total cost, we can choose $\widehat{K}$ to be $O(K\mathrm{nnz}(\mA)/(md))$. 
Finally, we note that Theorem~\ref{thm:clvr} is for $\vxt_k$, not  $\vxh_K$. However,  since $\E[\vxh_k]=\vxt_k$,  we  expect the convergence rates from the theorem to hold for $\vxh_K$ in practice.
% with the fact that the number of nonzero elements of $\mA$ should be larger than $n$ (otherwise, $\mA$ will have zero rows that can be deleted beforehand),  we can choose an $\widehat{K}$ not greater than $\lfloor \frac{K}{m}\rfloor$, where $m$ is the number of blocks. 

% The updates for $\vxt_k$ is a little tricky as all the coordinates of $\vx_k$ can be changed for the general separable proximal term $r$. As a result, if we want to output the exact average $\tilde{\vx}_K = \frac{1}{A_K}\sum_{k=1}^K a_k \vx_k$, we must output every coordinate of $\vx_k$ per iteration with $O(d)$ cost even it is not necessary for the following iterations. So, to avoid the $O(d)$ cost, instead of outputting the exact $\vxt_K$, we can output an unbiased estimation of $\tilde{\vx}_K$. The simplest choice is outputting an $\vx_{k}\; (k\in[K])$ with probability distribution $(\frac{a_1}{A_K}, \frac{a_2}{A_K},\ldots, \frac{a_K}{A_K}).$ With this random output, it is easy to verify that we can still guarantee the same convergence rate in expectation. 
% Nevertheless, this simple random choice may leads to big variance in the output. 
% As a tradeoff between variance and computational complexity, we can randomly compute a complete $\vx_k$ per data pass and compute their  average to provide 

An implementation of Algorithm~\ref{alg:clvr-impl} that exploits the form of $\vq_k$ in   Lemma~\ref{lem:qk} and $\vyt_k$ in Lemma \ref{lem:yk}, and evaluates explicitly only those components of $\vx_k$ needed to perform the rest of the iteration is given as Algorithm~\ref{alg:clvr-lazy2}. 
This version also incorporates the strategy for obtaining $\vxh_K$ by sampling $\widehat{K}$ iterates on which to evaluate the full vector $\vx_k$.

% Note that each loop of this version of the algorithm has an option to compute the {\em complete} vector $\vx_k$ at some iterations, if they are needed to define the output $\tilde{\vx}_K$. When the complete $\vx_k$ is not computed, the cost of each iteration is $O(\mathrm{nnz}(\mA^{S^{j_k}}))$. When the full vector is computed, there is an additional cost of $O(d)$, since some vector arithmetic with vectors of length $d$ is required.

Due to the efficient implementation in Algorithm \ref{alg:clvr-lazy2}, to attain an $\epsilon$-accurate solution in terms of the primal-dual gap in Theorem \ref{thm:clvr}, we need $O(\frac{\text{nnz}(\mA)\hat{L}}{\epsilon})$ FLOPS, which  corresponds to $O(\frac{\hat{L}}{\epsilon})$ data passes. As a result, because a smaller batch size leads to a smaller $\hat{L}$, we attain the best performance in terms of number of data passes when the batch size is set to one. However, as modern computer architecture has particular design for vectorized operations, lower runtime is obtained
for a small batch size strictly larger than one (see Section~\ref{sec:num-experiments-discussion}).        
%%\vspace{-1mm}

\begin{remark}
While we consider the case of  fully coordinate separable $r$ and $\gX$ for simplicity, our lazy update approach is also applicable to the coordinate block partitioning case in which $\gX = \gX_1\times\cdots\gX_m$ with $\gX_i\in \sR^{d_i} \,(i\in[m], \sum_{i=1}^m d_i=d)$ and $r(\vx) := \sum_{i=1}^m r(\vx^i)$ with $\vx^i\in\gX_i.$
%\el{Is this better?} \cb{It may have misleading meanings. Changed it.} 
The difference is that for each coordinate block $\mA^{S^j} (j\in[m])$, we overload $C^j \subset [m]$ to denote the set of blocks in $\mA^{S^j}$ where each coordinate block contains at least one nonzero element.
\end{remark}

% \begin{remark}%[Lazy updating strategies]
% Except for the dual averaging variant from \cite{song2021variance}, all stochastic primal-dual methods \cite{zhang2015stochastic, chambolle2018stochastic,fercoq2019coordinate,latafat2019new, alacaoglu2020random} are based on mirror descent-style updates in which $\vx_k$  depends directly on $\vx_{k-1}$ in a nonlinear way when proximal terms exist. 
% As a result, it is not obvious (and maybe not possible) to design lazy update strategies for these methods in full generality. 
% Our lazy update strategy can be adapted to the \vrpda~algorithm from~\cite{song2021variance}, but the implementation becomes more complicated due to the extrapolation steps on the primal variables in  \vrpda.
% \end{remark}

\begin{remark}%[Benefits of dual averaging]
Dual averaging has been shown to have significant advantage in producing sparser iterates than mirror descent in the context of online learning~\cite{xiao2010dual,lee2012manifold}. It further %has also been shown to 
leads to better bounds in well-conditioned finite-sum optimization~\cite{song2020variance}. 
In this work, we show that dual averaging offers better flexibility with sparse matrices than does mirror descent.
\end{remark}

\begin{algorithm}[ht!]
\caption{Coordinate Linear Variance Reduction with Lazy Update (Lazy CLVR)}\label{alg:clvr-lazy2}
\begin{algorithmic}[1]
\STATE \textbf{Input: } $\vxt_0 = \vx_0=\vx_{-1}\in\gX, \vy_0 \in \sR ^n,  \vz_0 = \mA^T\vy_0,$ $ \gamma>0$, $\hat{L} >0$,  $K$, $\widehat{K}$, $m$, $\{S^{1}, S^{2}, \ldots, S^{m}\}$, $\{C^{1}, C^{2}, \ldots, C^{m}\}$.
\STATE $a_0 = A_0 = 0, a_1 = A_1 = \frac{1}{2\hat{L} m},  \vq_0 =  a_1 (\vz_0 + \vc)$, $\vr_0 = \vzero_d$, $\vs_0 = \vzero_n$. 
\FOR{$k = 1,2,\ldots, K-1$}
\STATE $a_{k+1} = \frac{\sqrt{1 + \sigma A_{k}/\gamma}}{2\hat{L} m}$, $A_{k+1} = A_{k} + a_{k+1}.$
\ENDFOR
\STATE Choose indices $\{\ell_1, \dotsc,\ell_{\widehat{K}}\}$ i.i.d. from $[K]$ according to the distribution $\left\{\frac{a_1}{A_K}, \frac{a_2}{A_K}, \ldots, \frac{a_K}{A_K}\right\}$.
\FOR{$k = 1,2,\ldots, K$}
\STATE Pick $j_k$ uniformly at random in $[m].$
\IF{$k=\ell_i$ for some $i=1,2,\dotsc,\widehat{K}$}
\STATE $\vq_{k-1} = A_k (\vc + \vz_{k-1}) + \vr_{k-1}.$
\STATE $\vx_{k} =  \prox_{\frac{1}{\gamma}A_{k} r}(\vx_0 - \frac{1}{\gamma}\vq_{k-1}).$
% \STATE $\hat{\vx} = \hat{\vx}+\vx_{k-1}.$
\ELSE
\STATE $\vq_{k-1}^{C^{j_k}} = A_k (\vc^{C^{j_k}} + \vz_{k-1}^{C^{j_k}}) + \vr_{k-1}^{C^{j_k}}.$
\STATE $\vx_{k}^{C^{j_k}} =  \prox_{\frac{1}{\gamma}A_{k} r}(\vx_0^{C^{j_{k}}} - \frac{1}{\gamma}\vq_{k-1}^{C^{j_{k}}}).$
\ENDIF
\STATE $\vy_{k}^{S^{j_k}} = \vy_{k-1}^{S^{j_k}} + \gamma m a_k\big(\mA^{S^{j_k}, C^{{j_k}}}\vx_{k}^{C^{{j_k}}} - \vb^{S^{j_k}}\big), \vy_{k-1}^{S^i}$ for all $i\neq j_k.$
% \STATE  $ \vy_{k}^{S^i} = 
% \begin{cases}
% \vy_{k-1}^{S^i}, & i \in [m]\backslash \{j_k\} \\
% \vy_{k-1}^{S^i} + \gamma m a_k\big(\mA^{S^i, C^{i}}\vx_{k}^{C^{i}} - \vb^{S^i}\big), &  i = j_k\\
% \end{cases} $.
% \STATE $a_{k+1} = \frac{\sqrt{1 + \sigma A_{k}/\gamma}}{\sqrt{2}\hat{L} m}, A_{k+1} = A_{k} + a_{k+1}.$
%\vspace{0.03in}
% \STATE $\bar{\vy}_{k} = \vy_{k} +  \frac{ma_{k}}{a_{k+1}} (\vy_{k} - \vy_{k-1}). $
% \FOR{ $i\in C^{j_k}$ 
\STATE $\vz_k^{C^{j_k}} = \vz_{k-1}^{C^{j_k}} + (\mA^{S^{j_k}, {C^{j_k}}})^T(\vy_{k}^{S^{j_k}} - \vy_{k-1}^{S^{j_k}})$, $\vz_k^i = \vz_{k-1}^i$ for all $i \notin C^{j_k}$;
\STATE $\vr_k^{C^{j_k}} = \vr_{k-1}^{C^{j_k}} + (ma_k - A_k)(\vz_k^{C^{j_k}}-\vz_{k-1}^{C^{j_k}})$, $\vr_k^i = \vr_{k-1}^i$ for all $i \notin C^{j_k}$;
\STATE $\vs_k^{S^{j_k}} = \vs_{k-1}^{S^{j_k}} + ((m-1)a_k - A_{k-1})(\vy_k^{S^{j_k}}-\vy_{k-1}^{S^{j_k}})$, $\vs_k^i = \vs_{k-1}^i$ for all $i \notin S^{j_k}$;
\ENDFOR
% \STATE Choose $\tilde{\vx}_K$ from $\{\vx_k\}_{k=1}^K$ randomly with distribution $\{\frac{a_1}{A_K}, \frac{a_2}{A_K}, \ldots, \frac{a_K}{A_K}\}$. 
% \STATE $\hat{\vx}_K = \vy_K + \frac{1}{A_{K}}\vs_{K}$.
% \STATE $\hat{\vx}_K = \hat{\vx}/\bar{K}$.
\STATE $\hat{\vx}_K = \frac{1}{\widehat{K}}\sum_{i=1}^{\widehat{K}} \vx_{\ell_i}.$
\STATE $\vyt_K = \vy_K + \frac{1}{A_K}\vs_K.$
% \COMMENT{I realize this is not practical since we didn't evaluate complete vectors $\vx_k$ for all $k$. Hopefully we can replace by some other means of finding $\tilde{\vx}_K$. It's trivially possible to maintain  $\tilde{\vy}_k$ efficiently, if we need to do that.}
\STATE  \textbf{return } $\hat{\vx}_K$ and $\tilde{\vy}_K$.
\end{algorithmic}
\end{algorithm}

\section{Omitted proofs from Section~\ref{sec:clvr}} \label{appx:proofs-general}

\subsection{Omitted proofs from Section \ref{sec:general-algorithm}}

We state a version of \clvr~in Algorithm~\ref{alg:clvr-v2} that is convenient for the analysis, and is equivalent to Algorithm~\ref{alg:clvr-impl} in the main body of the paper. 
Before analyzing the convergence of \clvr, we justify our claim of equivalence in Proposition~\ref{prop:equivalence-of-algs}.

\begin{proposition}\label{prop:equivalence-of-algs}
The iterates of Algorithm~\ref{alg:clvr-impl} and \ref{alg:clvr-v2} are equivalent.
\end{proposition}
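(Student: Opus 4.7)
The plan is to prove the equivalence by induction on the iteration counter $k$, showing that both algorithms produce identical sequences $(\vx_k, \vy_k)$ almost surely (under a common sample path for the random indices $j_k$). Since Algorithm~\ref{alg:clvr-v2} is stated to be ``convenient for analysis,'' one expects it to be written in a clean primal-dual form --- typically with $\vx_k$ defined via a dual averaging update of the form $\vx_k = \argmin_{\vx\in\gX}\bigl\{\langle \vc, \vx\rangle A_k + \langle \vg_k, \vx\rangle + A_k r(\vx) + \frac{\gamma}{2}\|\vx - \vx_0\|^2\bigr\}$, where $\vg_k$ is an explicit accumulated (and extrapolated) sum of scaled coordinate-gradient estimators $\mA^{S^{j_i},T}\vy_i^{S^{j_i}}$ chosen to be an unbiased estimator of $\mA^T\vy$ with a built-in variance-reduction correction. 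My first step is therefore to identify, from the statement of Algorithm~\ref{alg:clvr-v2}, the precise expression $\vg_k$ that enters its dual averaging step.

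The heart of the argument will be a single induction claim: the auxiliary variable $\vq_{k-1}$ maintained in Algorithm~\ref{alg:clvr-impl} is exactly $\vg_k + A_k\vc$ (or whatever the corresponding quantity is that appears in the dual-averaging subproblem of Algorithm~\ref{alg:clvr-v2}). The base case is immediate from the initialization $\vq_0 = a_1(\vz_0+\vc)$ together with $A_1 = a_1$ and $\vz_0 = \mA^T\vy_0$. For the inductive step, I would unpack the recursion
\[
\vq_k \;=\; \vq_{k-1} + a_{k+1}(\vz_k + \vc) + m a_k(\vz_k - \vz_{k-1})
\]
and use the inductive hypothesis for $\vq_{k-1}$, together with the sparse update $\vz_k - \vz_{k-1} = \mA^{S^{j_k},T}(\vy_k^{S^{j_k}} - \vy_{k-1}^{S^{j_k}})$, to exhibit $\vq_k$ as $A_{k+1}\vc + (\text{sum of accumulated gradient estimates through iteration } k+1) + (\text{variance-reduction extrapolation term})$. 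The $m a_k(\vz_k - \vz_{k-1})$ piece should line up precisely with the extrapolation that Algorithm~\ref{alg:clvr-v2} applies to the most recent coordinate update before computing $\vx_{k+1}$. Once this identification is made, the prox definition \eqref{eq:prox} together with the separability of $\|\cdot\|^2$ implies that the $\vx_k$ in Step~4 of Algorithm~\ref{alg:clvr-impl} coincides with the $\vx_k$ produced by Algorithm~\ref{alg:clvr-v2}'s argmin.

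The $\vy_k$ update requires comparatively little work: since both algorithms pick the same random block $j_k$ and apply the same closed-form gradient ascent step on a linear dual objective, equality of $\vx_k$ (across the two algorithms) immediately yields equality of $\vy_k$. Consequently equality of $\vz_k = \vz_{k-1} + \mA^{S^{j_k},T}(\vy_k^{S^{j_k}}-\vy_{k-1}^{S^{j_k}})$ is automatic, closing the induction.

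The main obstacle I anticipate is algebraic bookkeeping of the extrapolation coefficient $m a_k$ versus the coefficient that naturally arises in Algorithm~\ref{alg:clvr-v2}. In particular, Algorithm~\ref{alg:clvr-v2} presumably expresses the variance-reduced estimator as $m \mA^{S^{j_k},T}\vy_k^{S^{j_k}} - (m-1)\mathbb{E}_{j_k}[\mA^{S^{j_k},T}\vy_{k-1}^{S^{j_k}}] = m\mA^{S^{j_k},T}\vy_k^{S^{j_k}} - (m-1)\mA^{S^{j_k},T}\vy_{k-1}^{S^{j_k}}$ (or similar), so I must verify carefully that summing these scaled estimators with weights $a_i$ and recombining produces exactly the recursion for $\vq_k$ given in Line~\ref{line:a1l9} of Algorithm~\ref{alg:clvr-impl}, with the telescoping term $m a_k(\vz_k - \vz_{k-1})$ as the visible residue of the extrapolation. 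Checking the analogous identity for the output $\tilde{\vy}_K$ (recalling the affine-combination form and the $-(m-1)\vy_0$ term in its definition) should follow by the same telescoping argument.
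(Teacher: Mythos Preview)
Your plan is correct and matches the paper's approach: the paper unrolls both the $\vq_k$ recursion and Algorithm~\ref{alg:clvr-v2}'s estimate sequence $\phi_k$ and shows they define the same argmin (your inductive framing is equivalent to this unrolling), then verifies the $\vy_k$ updates agree just as you describe. One small correction to your speculation about the ``main obstacle'': in Algorithm~\ref{alg:clvr-v2} the extrapolation is backward-looking, $\bar{\vy}_{k-1} = \vy_{k-1} + \tfrac{ma_{k-1}}{a_k}(\vy_{k-1}-\vy_{k-2})$, so the accumulated linear term is $\sum_{i=1}^k a_i(\mA^T\bar{\vy}_{i-1}+\vc)$, which indeed telescopes to $\vq_{k-1}$ exactly as you anticipate.
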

\begin{proof}
To argue equivalence, we show that the iterates of Algorithm~\ref{alg:clvr-impl} and \ref{alg:clvr-v2} solve the same optimization problems. To avoid ambiguity, here we will use $\vxh_k, \vyh_k$ to denote the iterates $\vx_k, \vy_k$ in Algorithm~\ref{alg:clvr-v2}, while we retain the notation $\vx_k, \vy_k$ for the iterates of Algorithm~\ref{alg:clvr-impl}. 

Let us first start by writing an equivalent definition of $\vx_k$ in Algorithm~\ref{alg:clvr-impl}. To do so, we first unroll the recursive definitions of $\vz_k$ and $\vq_k$. We can observe that, since by definition $\vy_k$ and $\vy_{k-1}$ only differ over the coordinate block $S^{j_k},$ we have
\begin{equation}\label{eq:equiv-def-of-z_k}
    \begin{aligned}
        \vz_k &= \vz_{0} + \sum_{i=1}^k\mA^T(\vy_i - \vy_{i-1}) = \mA^T\vy_k. 
\end{aligned}
\end{equation}
On the other hand, using Eq.~\eqref{eq:equiv-def-of-z_k}, the definition of $\vq_k$ implies
\begin{equation}\label{eq:equiv-def-q_k}
    \begin{aligned}
        \vq_k &= A_{k+1}\vc + a_1 \mA^T \vy_0 + \sum_{i=1}^k \mA^T\big[a_{i+1} \vy_i + m a_i(\vy_i - \vy_{i-1})\big] %\\
        %&=  A_{k+1}\vc + (a_{k+1} + m a_k)\mA^T\vy_k + \sum_{i=1}^{k-1} (ma_i - (m-1)a_{i+1})\mA^T\vy_i - m a_0 \mA^T\vy_0.  
    \end{aligned}
\end{equation}
% \sjw{inserted $\mA^T$.}
%
Using the definition of the proximal operator (see Eq.~\eqref{eq:prox}) and the definition of $\vx_k$ in Step~4 of Algorithm~\ref{alg:clvr-impl}, we have
\begin{equation}\label{eq:alg-1-opt-cond-x_k}
\begin{aligned}
    \vx_k &= \argmin_{\vx \in \gX} \bigg\{\frac{A_k}{\gamma}r(\vx) + \frac{1}{2}\Big\|\vx - \vx_0 + \frac{1}{\gamma}\vq_{k-1}\Big\|^2\bigg\}\\
    &= \argmin_{\vx \in \gX} \bigg\{{A_k}r(\vx) + \frac{\gamma}{2}\Big\|\vx - \vx_0 + \frac{1}{\gamma}\vq_{k-1}\Big\|^2\bigg\}. 
\end{aligned}
\end{equation}

Now let us consider the optimization problem that defines $\vxh_k.$ Assume for now that the definitions of $\vy_k$ and $\vyh_k$ agree (we justify this claim below). 
Observe first that the minimization problem defining $\vxh_k$ is independent of $\vu,$ so by unrolling the recursion for $\phi_k,$ we have 
% \sjw{Need to handle the $i=1$ term specially in the argument below, because $\vy_{-1}$ is not defined} \cb{I think we can just set $\vy_{-1} = \vy_0.$}
%
\begin{align*}
    \vxh_k &= \argmin_{\vx \in \gX} \Bigg\{ \frac{\gamma}{2}\|\vx - \vx_0\|^2 + A_k r(\vx) + a_1 \innp{\vx,\vc+\mA^T \bar{\vy}_0 } \\ & \hspace{2cm} + \sum_{i=2}^k a_i \innp{\vx, \vc + \mA^T\Big(\vy_{i-1} + \frac{m a_{i-1}}{a_i}(\vy_{i-1} - \vy_{i-2})\Big)} \Bigg\} \\
    &= \argmin_{\vx \in \gX} \Bigg\{\frac{\gamma}{2}\|\vx - \vx_0\|^2 + A_k r(\vx) \\ & \hspace{2cm} + \innp{\vx, A_k \vc + a_1 \mA^T \vy_0 + \mA^T\Big(\sum_{i=2}^k \big[a_i \vy_{i-1} + {m a_{i-1}}(\vy_{i-1} - \vy_{i-2})\big]\Big)}\Bigg\}\\
    &= \argmin_{\vx \in \gX} \Big\{\frac{\gamma}{2}\|\vx - \vx_0\|^2 + A_k r(\vx) + \innp{\vx, \vq_{k-1}}\Big\}\\
    &= \argmin_{\vx \in \gX} \bigg\{{A_k}r(\vx) + \frac{\gamma}{2}\Big\|\vx - \vx_0 + \frac{1}{\gamma}\vq_{k-1}\Big\|^2\bigg\}\\
    &= \vx_k. 
\end{align*}
It remains to argue that the definitions of $\vy_k$ and $\vyh_k$ agree. First, observe that since the definitions of $\psi_k$ and $\psi_{k-1}$ differ only over block $S^{j_k}$, we have $\vyh_k^{S^j} = \vyh_{k-1}^{S^j}$ for all $j \neq j_k$. 
% \sjw{Replaced subscripts on $S$ by superscripts everywhere.} 
For $j = j_k,$ we have by unrolling the recursive definition of $\psi_k$ that 
% \sjw{Previously this had $\mathds{1}_{\{i = j_k\}}$ and I think there was a lot of confusion between $i$ denoting two different things: the partition of indices and also denoting iteration count. I'm making a bunch of changes in the lines below to try to fix this.} 
%
\begin{align*}
    \vyh_k^{S^{j_k}} &= \argmin_{\vy^{S^{j_k}} \in \sR^{n_{j_k}}}\Big\{\frac{1}{2\gamma}\|\vy^{S^{j_k}} - \vy_0^{S^{j_k}}\|^2 - \sum_{i=1}^k \mathds{1}_{\{S^{j_i} = S^{j_k}\}} m a_i \innp{\vy^{S^{j_i}}, \mA^{S^{j_i}}\vx_i - \vb^{S^{j_i}}} \Big\}\\
    &= \vy_0^{S^{j_k}} +\gamma\sum_{i=1}^k \mathds{1}_{\{S^{j_i} = S^{j_k}\}} m a_i (\mA^{S^{j_i}}\vx_i - \vb^{S^{j_i}})\\
    &= \vyh_{k-1}^{S^{j_k}} + \gamma  m a_k (\mA^{S^{j_k}}\vx_k - \vb^{S^{j_k}})\\
    &= \vy_k^{S^{j_k}},
\end{align*}
as claimed.
\end{proof}

\begin{algorithm*}[ht!]
\caption{Coordinate Linear Variance Reduction (Analysis Version)}\label{alg:clvr-v2}
\begin{algorithmic}[1]
\STATE \textbf{Input: }  $\vx_0=\vx_{-1}\in\gX, \vy_0 = \bar{\vy}_0 \in \sR^n$, $m, \{S^{1}, S^{2}, \ldots, S^{m}\},  K, \gamma>0, \hat{L} >0.$
\STATE $\phi_0(\cdot) = \frac{ \gamma}{2}\|\cdot - \vx_0\|^2, \psi_{0}(\cdot) = \frac{1}{2\gamma}\|\cdot - \vy_0\|^2$.
\STATE $a_1 = A_1 = \frac{1}{2\hat{L}m}.$
\FOR{$k = 1,2,3,\ldots, K$}
\STATE  $\vx_{k} = \argmin_{\vx\in\gX}\{\phi_k(\vx) =  \phi_{k-1}(\vx) + a_k (\langle {\vx} - \vu,  \mA^T\bar{\vy}_{k-1} + \vc  \rangle+ r(\vx) - r(\vu) )\}.$ 
\STATE Pick $j_k$ uniformly at random in $[m].$
\STATE $\vy_{k} = \argmin_{\vy\in\sR^{{n}}}\{  \psi_k(\vy) =  \psi_{k-1}(\vy) + m a_k(-
\langle \vy^{S^{j_k}} - \vv^{S^{j_k}}, \mA^{S^{j_k}}\vx_{k} -  \vb^{S^{j_k}}\rangle)\}$.
\STATE $a_{k+1} = \frac{\sqrt{1 + \sigma A_{k}/\gamma}}{2\hat{L} m}, A_{k+1} = A_{k} + a_{k+1}.$
%\vspace{0.03in}
\STATE $\bar{\vy}_{k} = \vy_{k} +  \frac{ma_{k}}{a_{k+1}} (\vy_{k} - \vy_{k-1}). $
\ENDFOR
\STATE \textbf{return } $\tilde{\vx}_K := \frac{1}{A_K}\sum_{k=1}^K a_k  \vx_k, \tilde{\vy}_K = \frac{1}{A_K}\sum_{k=1}^K (a_k \vy_k + (m-1)a_k(\vy_k - \vy_{k-1}))$. 
\end{algorithmic}%\label{alg:vrpda}	
\end{algorithm*}

In the following three lemmas, we bound  $\phi_k(\vx_k)$ and $\psi_k(\vy_k)$ below and above, which is then subsequently used to bound the primal-dual gap in Theorem~\ref{thm:clvr}. 
\begin{lemma}\label{lem:block-psi-phi-upper}
For all steps of Algorithm \ref{alg:clvr-v2} with $k\ge 1$, we have, $\forall (\vu, \vv)\in\gX\times\sR^n,$ 
\begin{eqnarray}
\phi_k(\vx_k) &\le& \frac{\gamma}{2}\| \vu - \vx_0\|^2 - \frac{\gamma + \sigma A_k }{2}\|\vu - \vx_k\|^2, \notag\\
\psi_k(\vy_k) &\le& \frac{1}{2\gamma}\|\vv - \vy_0\|^2 - \frac{1}{2\gamma}\|\vv - \vy_k\|^2.     \notag
\end{eqnarray}
\end{lemma}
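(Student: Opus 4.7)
The plan is to exploit two simple observations: (i) the accumulated functions $\phi_k$ and $\psi_k$, when evaluated at the reference points $\vu$ and $\vv$, are \emph{invariant} across iterations; and (ii) both $\phi_k$ and $\psi_k$ are strongly convex, with explicit parameters, so that the standard optimality inequality for a strongly convex minimizer delivers the claimed bounds.

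First I would prove by induction on $k$ that $\phi_k(\vu) = \phi_0(\vu) = \tfrac{\gamma}{2}\|\vu-\vx_0\|^2$ and $\psi_k(\vv) = \psi_0(\vv) = \tfrac{1}{2\gamma}\|\vv - \vy_0\|^2$. For $\phi_k$, the increment from $\phi_{k-1}$ to $\phi_k$ evaluated at $\vx=\vu$ is $a_k(\langle \vu-\vu,\, \mA^T\bar{\vy}_{k-1}+\vc\rangle + r(\vu)-r(\vu))=0$. For $\psi_k$, the increment evaluated at $\vy=\vv$ is $m a_k\bigl(-\langle \vv^{S^{j_k}}-\vv^{S^{j_k}},\, \mA^{S^{j_k}}\vx_k-\vb^{S^{j_k}}\rangle\bigr)=0$. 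Hence both quantities telescope away and only the base quadratic survives.

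Next I would identify the strong-convexity parameters. The base $\phi_0$ is $\gamma$-strongly convex, and the only $\vx$-curvature added at iteration $i$ is the term $a_i r(\vx)$, which is $a_i\sigma$-strongly convex by Assumption~\ref{assmpt:r} (the linear-in-$\vx$ piece contributes no curvature). Summing, $\phi_k$ is $(\gamma + \sigma A_k)$-strongly convex on $\gX$. Analogously, the increments added to $\psi_k$ are linear in $\vy$, so $\psi_k$ inherits exactly the $\tfrac{1}{\gamma}$-strong convexity of $\psi_0$.

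Finally, since $\vx_k = \argmin_{\vx\in\gX}\phi_k(\vx)$ and $\phi_k$ is $(\gamma+\sigma A_k)$-strongly convex, the standard strong-convexity lower bound at the minimizer gives, for every $\vu\in\gX$,
\begin{equation*}
    \phi_k(\vu) \;\ge\; \phi_k(\vx_k) + \tfrac{\gamma+\sigma A_k}{2}\|\vu - \vx_k\|^2.
\end{equation*}
Substituting $\phi_k(\vu) = \tfrac{\gamma}{2}\|\vu - \vx_0\|^2$ from the first step yields the first claim. The same argument applied to the unconstrained minimizer $\vy_k$ of the $\tfrac{1}{\gamma}$-strongly convex function $\psi_k$ yields the second. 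There is no real obstacle here: the crux is simply checking that the per-step increments vanish at the reference point, and the rest is the textbook strong-convexity optimality inequality.
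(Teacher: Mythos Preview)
Your proposal is correct and follows essentially the same approach as the paper: the paper unrolls $\phi_k$ and $\psi_k$, notes that $\phi_k(\vu)$ and $\psi_k(\vv)$ reduce to the base quadratics (your inductive ``invariance'' step), observes the strong-convexity parameters, and applies the optimality inequality at the minimizer. The only cosmetic difference is that the paper writes out $\phi_k(\vu)$ from the unrolled sum rather than via induction on the increments, but the content is identical.
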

\begin{proof}
By the definitions of $\psi_k(\vy)$ and $\phi_k(\vx)$ in Algorithm \ref{alg:clvr-v2}, it follows that, $\forall k\ge 1,$
\begin{equation}\label{eq:vrpda-phi_k-def}
\begin{aligned}
\phi_k(\vx) =\;& \sum_{i=1}^k a_i ( \langle \vx - \vu, \mA^T\bar{\vy}_{i-1}  + \vc \rangle + r(\vx) -r(\vu)) +  \frac{\gamma}{2}\|\vx - \vx_0\|^2,\end{aligned}
\end{equation}
and
\begin{equation}\label{eq:vrpda-psi_k-def}
\begin{aligned}
\psi_k(\vy) =\; &\sum_{i=1}^k m a_i\Big( - \innp{\vy^{S^{j_i}} - \vv^{S^{j_i}}, \mA^{S^{j_i}}\vx_{i} - \vb^{S^{j_i}}}   \Big) + \frac{1}{2\gamma}\|\vy - \vy_0\|^2. 
\end{aligned}
\end{equation}

Observe that, as function of $\vx$, $\phi_k(\vx)$ is $(\gamma+\sigma A_k)$-strongly convex. As, by definition, $\vx_k = \argmin_{\vx\in\gX}\phi_k(\vx),$ it follows that
\begin{align}
\phi_k(\vu) \geq \phi_k(\vx_k) + \frac{\gamma+\sigma A_k  }{2}\| \vu-\vx_k\|^2.
\end{align}
Now, writing $\phi_k(\vu)$ explicitly and rearranging the last inequality, the stated bound on $\phi_k(\vx_k)$ follows. 

As a function of $\vy,$ $\psi_k(\vy)$ is $1/\gamma$-strongly convex. 
The proof for the bound on $\psi_k$ uses the same argument and is omitted.  
\end{proof}
%\todo[inline]{JD: Have checked and edited up to here. Coming back to it soon...}

In the following proof,  for $k\ge 1$, let $\gF_{k}$ denote the natural filtration, containing all the randomness in the algorithm up to and including iteration $k.$ Recall that $\mA = \begin{psmallmatrix} \mA^{S^1}\\ \vdots\\  \mA^{S^{m}} \end{psmallmatrix}$, and let $\mA^{\bar{S}^{j}} (j\in[m])$ denote the matrix $\mA$ with its $S^{j}$ block of rows replaced by a zero block.
% while $\mA^{S^{j}} (j\in[m])$ denotes the submatrix of $\mA$ in $\sR^{|S^{j}|\times d}$ corresponding to the rows indexed by $S^{j}$.
% or the matrix $\mA$ with all the block of rows except  $S^{j}$ replaced by a zero vector according to the context.   
% \jd{I get what you're saying (after reading the same sentence 3-4 times), but it is confusing. I think you shouldn't use the same notation to mean two different things.} \cb{OK...} \sjw{I went through and fixed this (I think)}

For convenience, for $k=1,2,\ldots,$ we define 
\begin{align}
\vyh_k = \vy_{k-1} + \gamma m a_k (\mA\vx_k - \vb).  \label{eq:y-hat}
\end{align}
Then from the definition of $\vy_k$ in Algorithm~\ref{alg:clvr-impl}, we have $\E[\vy_k - \vy_{k-1}|\gF_{k-1}] = \frac{1 }{m}(\vyh_k - \vy_{k-1}).$

Motivated by \cite{alacaoglu2019convergence},  we have the following result.
\begin{lemma}\label{lem:expec-max}
Given the sequences $\{\vy_k\}$ in Algorithm \ref{alg:clvr-v2} and $\{\vyh_k\}$ in Eq.~\eqref{eq:y-hat}, we define the sequence $\{\check{\vv}_k\}$ by
\[
\check{\vv}_k = (\vy_k - \vy_{k-1}) -  \frac{1}{m}(\vyh_k - \vy_{k-1}).
\]
% then we have 
% \begin{align}
%  \E\Big[ \sum_{i=1}^k  \|\check{\vv}_i\|^2  \Big] \le  \E\Big[ \sum_{i=1}^k\|\vy_i - \vy_{i-1}\|^2\Big], \label{eq:check-v}
% \end{align}
% and 
Then for any $\vv\in\sR^d$ that may be correlated with the randomness in $\{\check{\vv}_i\}_{i=1}^k$, we have 
\begin{align}
\E\Big[ - \sum_{i=1}^k \innp{ \vv,  \check{\vv}_i}\Big]  \le      \E\Big[ \frac{1}{2} \|\vy_0 - \vv\|^2 + \frac{1}{2} \sum_{i=1}^{k}\|\vy_i - \vy_{i-1}\|^2\Big],  \label{eq:check-v-y}
\end{align}
where the expectation is taken over all the randomness in the history.
\end{lemma}

\begin{proof}
First, we prove a bound for $\E\Big[ \sum_{i=1}^k \|\check{\vv}_i\|^2  \Big]$. By the fact that $\E[\vy_i-\vy_{i-1}|\gF_{i-1}] = \frac{1}{m}(\vyh_i - \vy_{i-1})$, for each $\check{\vv}_i,$ we have  
$$\E[\|\check{\vv}_i\|^2|\gF_{i-1}] = \E\Big[\|\vy_i - \vy_{i-1}\|^2 | \gF_{i-1} \Big] - \E\Big[\big\| \frac{1}{m}\big(\hat{\vy}_i-\vy_{i-1}\big)\big\|^2|\gF_{i-1}\Big] \le\E[\|\vy_i - \vy_{i-1}\|^2|\gF_{i-1}].$$
Taking expectation on all the randomness in the history, for $\E\Big[\sum_{i=1}^k \|\check{\vv}_i\|^2\Big],$ we have 
\begin{equation}
\E\Big[ \sum_{i=1}^k \|\check{\vv}_i\|^2  \Big]  =   \E\Big[  \sum_{i=1}^k \E[\|\check{\vv}_i\|^2|\gF_{i-1}]\Big] 
\le\E\Big[  \sum_{i=1}^k \E[\|\vy_i - \vy_{i-1}\|^2|\gF_{i-1}]\Big] 
=   \E\Big[\sum_{i=1}^k\|\vy_i - \vy_{i-1}\|^2\Big]. \label{eq:check-v}
\end{equation}
 Then to prove \eqref{eq:check-v-y}, we define the sequence $\{ \check{\vy}_k \}_{k=0}^\infty$ by $\check{\vy}_0 = \vy_0$ and 
$\check{\vy}_k =  \check{\vy}_{k-1} - \check{\vv}_k$ for $k=1,2,\dotsc$.
By expanding $\frac{1}{2}\|\check{\vy}_k - \vv\|^2$,
\begin{align*}
\frac{1}{2}\|\check{\vy}_k - \vv\|^2 & = \frac{1}{2}\|\check{\vy}_{k-1} - \vv\|^2 -\langle \check{\vv}_k, \check{\vy}_{k-1} - \vv\rangle + \frac{1}{2}\|\check{\vv}_k\|^2 \\
\implies\;\; \langle \check{\vv}_k, \check{\vy}_{k-1} - \vv\rangle & = \frac{1}{2}\|\check{\vy}_{k-1} - \vv\|^2           - \frac{1}{2}\|\check{\vy}_k - \vv\|^2 + \frac{1}{2}\|\check{\vv}_k\|^2.
\end{align*}
By summing this expression and telescoping, we obtain
\begin{align}
\sum_{i=1}^k \langle\check{\vy}_{i-1} -\vv,  \check{\vv}_i\rangle  =\;&  \frac{1}{2}\|\check{\vy}_{0} - \vv\|^2   -\frac{1}{2}\|\check{\vy}_k - \vv\|^2 +\sum_{i=1}^k  \frac{1}{2}\|\check{\vv}_i\|^2 \nonumber\\
\le\;& \frac{1}{2}\|\check{\vy}_{0} - \vv\|^2 +\sum_{i=1}^k  \frac{1}{2}\|\check{\vv}_i\|^2 \nonumber\\
=\;& \frac{1}{2}\|\vy_{0} - \vv\|^2 +\sum_{i=1}^k  \frac{1}{2}\|\check{\vv}_i\|^2,
\label{eq:check-y-v-check-v}
\end{align}
It follows that
\begin{align}
\E\Big[ - \sum_{i=1}^k \innp{ \vv,  \check{\vv}_i}\Big]    =\;&     \E\Big[   \sum_{i=1}^k \innp{ \check{\vy}_{i-1} - \vv,  \check{\vv}_i} - \sum_{i=1}^k \innp{\check{\vy}_{i-1},  \check{\vv}_i}\Big]    \nonumber   \\
  \le \;&\E\Big[ \frac{1}{2} \|\vy_0 - \vv\|^2 + \sum_{i=1}^k\frac{1}{2}\|\check{\vv}_i\|^2 - \sum_{i=1}^k \innp{\check{\vy}_{i-1},  \check{\vv}_i}  \Big] \nonumber    \\
\le \;& \E\Big[ \frac{1}{2} \|\vy_0 - \vv\|^2 + \frac{1}{2} \sum_{i=1}^{k}\|\vy_i - \vy_{i-1}\|^2 - \sum_{i=1}^k \innp{\check{\vy}_{i-1},  \check{\vv}_i}\Big].  \label{eq:check-v-sum} 
% = \;& \E\Big[ \frac{1}{2} \|\vy_0 - \vv\|^2 + \frac{1}{2} \sum_{i=1}^{k}\|\vy_i - \vy_{i-1}\|^2\Big]. 
\end{align}
where the first inequality is by Eq.~\eqref{eq:check-y-v-check-v} and the second inequality is by Eq.~\eqref{eq:check-v}.
To obtain the result \eqref{eq:check-v-y}, we use the facts that $\E [ \check{\vv}_i \, | \, \gF_{i-1}]=0$ and 
that $\check{\vy}_{i-1}\in\gF_{i-1}$  to obtain $\E\Big[\sum_{i=1}^k \innp{\check{\vy}_{i-1},  \check{\vv}_i}\Big]=0$.
\end{proof}

We emphasize that Lemma \ref{lem:expec-max} holds for any $\vv$ that may be even correlated with the randomness in the algorithm.

\begin{lemma}\label{lem:block-psi-phi-lower}
For all steps of Algorithm \ref{alg:clvr-v2} with $k\ge 1$,  %\sjw{Actually the result is for randomness over all $j_1,j_2,\dotsc,j_k$} 
we have for all $(\vu, \vv) \in \gX \times \sR^n$ that
\begin{align}
\phi_k(\vx_k) \ge\; & \phi_{k-1}(\vx_{k-1}) + \frac{\gamma+\sigma A_{k-1}}{2}\|\vx_k - \vx_{k-1}\|^2  + a_k(r(\vx_k) - r(\vu))  \notag  \\
\;& \quad -a_k \innp{\vx_k - \vu,  \mA^T(\vy_k - \vy_{k-1}) } + a_k \innp{\vx_k - \vu,  \mA^T\vy_k+ \vc }       \nonumber\\  
\;& \quad + m a_{k-1}\Big(\innp{\vx_{k-1} - \vu, \mA^T(\vy_{k-1} - \vy_{k-2})} +      \innp{\vx_k - \vx_{k-1}, \mA^T(\vy_{k-1} - \vy_{k-2})}\Big),\nonumber  \\
\psi_k(\vy_k) \ge\;&  \psi_{k-1}(\vy_{k-1}) +  \frac{1}{2\gamma} \|\vy_k - \vy_{k-1}\|^2   
  -  a_k \langle\mA \vx_k - \vb , \vy_k - \vv\rangle  \nonumber \\ \;& - (m-1)a_k  \langle \mA \vx_k - \vb, \vy_k - \vy_{k-1}\rangle
   + \frac{1}{\gamma} \innp{\vy_{k-1} - \vv, - (\vy_k - \vy_{k-1}) +  \frac{1}{m}(\vyh_k - \vy_{k-1})}.   \nonumber
\end{align}
\end{lemma}

\begin{proof}
For the first claim, we have from the definition of $\phi_k(\vx_k)$, using that $\phi_{k-1}(\vx_{k-1})$ is $(\gamma + \sigma A_{k-1})$-strongly convex and minimized at $\vx_{k-1},$ that
\begin{equation}\label{eq:vr-phi-0}
\begin{aligned}
\phi_k(\vx_k) \ge\; & \phi_{k-1}(\vx_{k-1}) + \frac{\gamma+\sigma A_{k-1}}{2}\|\vx_k - \vx_{k-1}\|^2  \\
&+ a_k\big(\langle \vx_k - \vu, \mA^T\bar{\vy}_{k-1} +\vc\rangle + r(\vx_k) - r(\vu)\big).  
\end{aligned}
\end{equation}
Meanwhile, by the definition of $\{\bar{\vy}_k\}$ (using $\vy_{-1}=\vy_0$ for the case of $k=1$),  we have that
\begin{align}
&a_k\innp{\vx_k - \vu,  \mA^T\bar{\vy}_{k-1}} \nonumber\\   
=\;& a_k\innp{\vx_k - \vu,      \mA^T\Big( \vy_{k-1} + \frac{ma_{k-1}}{a_k}(\vy_{k-1} - \vy_{k-2})  \Big) }             \nonumber\\   
=\;&- a_k \innp{\vx_k - \vu,  \mA^T(\vy_k - \vy_{k-1})} + a_k \innp{\vx_k - \vu,  \mA^T\vy_k}     \nonumber\\   
\;& + m a_{k-1}\Big(\innp{\vx_{k-1} - \vu, \mA^T(\vy_{k-1} - \vy_{k-2})} +      \innp{\vx_k - \vx_{k-1}, \mA^T(\vy_{k-1} - \vy_{k-2})}\Big).  \label{eq:vr-phi-1}
\end{align}
The claimed lower bound on $\phi_k(\vx_k)$ follows when we combine \eqref{eq:vr-phi-0} and \eqref{eq:vr-phi-1}.

% \jd{$\vc$ seems to be missing in both bounds.}

%
For the second claim, we have  by the definition of $\psi_k$ that 
%% \begin{equation} 
%% \begin{aligned}
%% \psi_k(\vy_k) - \psi_{k-1}(\vy_{k-1})%=\; &  %a_k( - \vb_{j_k}^T \bar{\vx}_{k-1}(y_{k, j_k} - v_{j_k}) + g^*_{j_k}(y_{k,j_k})) \\
%\nonumber
%% = \; & \psi_{k-1}(\vy_k) - \psi_{k-1}(\vy_{k-1}) \\%\psi_{k-1}(\vy_{k-1}) + \frac{ n }{2}\|\vy_k - \vy_{k-1}\|^2
%% &- m a_k   \innp{\vy^{S^{j_k}}_k - \vv^{S^{j_k}}, \mA^{S^{j_k}}\vx_{k}- \vb^{S^{j_k}}}.    
%% \end{aligned} \label{eq:block-psi-phi-lower-1}
%% \end{equation}
\begin{equation} 
\psi_k(\vy_k) - \psi_{k-1}(\vy_{k-1})
=  \psi_{k-1}(\vy_k) - \psi_{k-1}(\vy_{k-1}) \\%\psi_{k-1}(\vy_{k-1}) + \frac{ n }{2}\|\vy_k - \vy_{k-1}\|^2
- m a_k   \innp{\vy^{S^{j_k}}_k - \vv^{S^{j_k}}, \mA^{S^{j_k}}\vx_{k}- \vb^{S^{j_k}}}.    \label{eq:block-psi-phi-lower-1}
\end{equation}
To obtain the claimed lower bound on $\psi_k$, we proceed to bound the terms on the right-hand side in \eqref{eq:block-psi-phi-lower-1}. 
%% To keep the notation simple, we define 
%% \begin{align}
%%  E_k := - m a_k \innp{\vy^{S^{j_k}}_k - \vv^{S^{j_k}}, \mA^{S^{j_k}}\vx_{k} - \vb^{S^{j_k}}}   %% \label{eq:block-psi-phi-lower-2}
%% \end{align}
%% for the terms from the second line. 
First, since  $\psi_{k-1}$ is $(1/\gamma)$-strongly convex and minimized at $\vy_{k-1},$ we have
\begin{equation}%\label{eq:block-vrpda-[]_1-bnd}
     \psi_{k-1}(\vy_k) - \psi_{k-1}(\vy_{k-1}) \geq \frac{1}{2\gamma}\|\vy_k - \vy_{k-1}\|^2. \label{eq:block-psi-phi-lower-3}
\end{equation}
% where $\vg_{\psi_{k-1}} \in \partial \psi_{k-1}(\vy_{k-1}),$ and we have used that, by definition, $\vy_{k-1} = \argmin_{\vy \in \sR^n}\psi_{k-1}(\vy).$ 
% \cbnote{why $\vy \in \rr^n$ not $\vy \in \sR^n$?}
Second, by using the definition of $\mA^{S_{j_k}}$ and ${\mA}^{\bar{S}_{j_k}},$ and by using several times that $\vy_{k-1}$ and $\vy_k$ differ only in their $S^{j_k}$ components, we have  
%
%Taking expectation on the randomness of $j_k$, we have 
% \begin{align*}
% \E\Big[ - &\innp{\vy_k^{S^{j_k}} - \vv^{S^{j_k}}, \mA^{S^{j_k}}\vx_{k}}\Big| \gF_{k-1} \Big]\\
% &\hspace{1cm}=\E\big[ \langle -  \mA \vx_{k}, \vy_k - \vv\rangle + \langle \mA^{\bar{S}^{j_k}}\vx_{k}, \vy_k - \vv\rangle | \gF_{k-1}\big]   \\
% &\hspace{1cm}= \E\big[\langle - \mA \vx_{k}, \vy_k - \vv\rangle | \gF_{k-1} \big] +  \frac{m-1}{m}   \langle \mA \vx_{k}, \vy_{k-1} - \vv\rangle,%  \\
% \end{align*}

\begin{align}
- &\innp{\vy_k^{S^{j_k}} - \vv^{S^{j_k}}, \mA^{S^{j_k}}\vx_{k} - \vb^{S^{j_k}} }   \nonumber \\ 
&\hspace{1cm}= -\innp{\vy_k  - \vv  , \mA \vx_{k} - \vb} + \innp{\vy_k - \vv, \mA^{\bar{S}^{j_k}} \vx_{k} - \vb^{\bar{S}^{j_k}}}       \nonumber \\  
&\hspace{1cm}=   -\innp{\vy_k  - \vv  , \mA \vx_{k} - \vb} + \innp{\vy_{k-1} - \vv, \mA^{\bar{S}^{j_k}} \vx_{k} - \vb^{\bar{S}^{j_k}}}       \nonumber \\  
&\hspace{1cm}=   -\innp{\vy_k  - \vv  , \mA \vx_{k} - \vb} + \frac{m-1}{m} \innp{\vy_{k-1} - \vv, \mA \vx_{k} - \vb}       \nonumber \\  
&\hspace{1.5cm} + \innp{\vy_{k-1} - \vv, \mA^{\bar{S}^{j_k}} \vx_{k} - \vb^{\bar{S}^{j_k}} - \frac{m-1}{m}(\mA \vx_{k} - \vb)}      \nonumber  \\
&\hspace{1cm}= -\frac{1}{m}\innp{\vy_k  - \vv  , \mA \vx_{k} - \vb} + \frac{m-1}{m} \innp{\vy_{k-1} - \vy_k, \mA \vx_{k} - \vb}       \nonumber \\ 
&\hspace{1.5cm} + \innp{\vy_{k-1}^{S^{j_k}} - \vv^{S^{j_k}}, - (\mA^{S^{j_k}} \vx_{k} - \vb^{S^{j_k}})} + \frac{1}{m} \innp{ \vy_{k-1}-\vv, \mA \vx_{k} - \vb)}            \nonumber \\    
&\hspace{1cm}= -\frac{1}{m}\innp{\vy_k  - \vv  , \mA \vx_{k} - \vb} + \frac{m-1}{m} \innp{\vy_{k-1} - \vy_k, \mA \vx_{k} - \vb}       \nonumber \\ 
&\hspace{1.5cm} + \innp{\vy_{k-1} - \vv, - \frac{1}{\gamma m a_k}  (\vy_k - \vy_{k-1}) +  \frac{1}{\gamma m^2 a_k}(\vyh_k - \vy_{k-1})},
\label{eq:block-psi-phi-lower-4}
\end{align}
where in the last equality we used $\vy_k^{S^{j_k}} - \vy_{k-1}^{S^{j_k}} = \gamma m a_k (\mA^{S^{j_k}} \vx_{k} - \vb^{S^{j_k}})$ and $\vyh_k - \vy_{k-1} = \gamma m a_k (\mA\vx_k - \vb)$, which both hold by definitions of $\vyh_k$ and $\vy_k.$ 
% where the second equality is by the fact that $\vy_k$ and $\vy_{k-1}$ have the same 
 
Finally, combining \eqref{eq:block-psi-phi-lower-1}--\eqref{eq:block-psi-phi-lower-4}, we have the bound on $\psi_k(\vy_k)$ from the statement of the lemma. 
\end{proof}

By combining the two lower bounds in Lemma~\ref{lem:block-psi-phi-lower}, we obtain the following result.
\begin{lemma}\label{lem:vr-psi-phi}
For any $(\vu,\vv) \in \gX \times \sR^n$, the sum of $\psi_k(\vy_k) + \phi_k(\vx_k)$ can be bounded as follows: for all $k\ge 1,$
\begin{equation} 
\begin{aligned}
&\phi_k(\vx_k) + \psi_k(\vy_k)   \\
 \ge\;&     \phi_{k-1}(\vx_{k-1})  + \psi_{k-1}(\vy_{k-1})  \\
& \quad - m a_k\innp{\vx_k - \vu,  \mA^T(\vy_k - \vy_{k-1})} +  m a_{k-1} \innp{\vx_{k-1} - \vu, \mA^T(\vy_{k-1} - \vy_{k-2})}  \\ 
& \quad + \frac{1}{2\gamma} \|\vy_k - \vy_{k-1}\|^2 - \frac{1}{4\gamma}\|\vy_{k-1} - \vy_{k-2}\|^2 +Q_k,
\end{aligned} 
\end{equation}
where
\begin{equation}
\begin{aligned}
 Q_k :=& a_k\Big( r(\vx_k) - r(\vu)  -  \langle \mA \vu, \vy_k\rangle   +  \langle \vx_k, \mA^T \vv\rangle  +  \innp{\vx_k - \vu, \vc }   +  \langle  \vb , \vy_k - \vv\rangle  \\
 &\quad\quad - (m-1)\langle \mA  \vu - \vb, \vy_k - \vy_{k-1}\rangle\Big) + \frac{1}{\gamma} \innp{\vy_{k-1} - \vv, - (\vy_k - \vy_{k-1}) +  \frac{1}{m}(\vyh_k - \vy_{k-1})}.    
\end{aligned}\label{eq:Qk}    
\end{equation}
\end{lemma}
\begin{proof}
Before our proof, as $A_{-1}$ and $A_0$ are not used in Algorithm \ref{alg:clvr-v2}, without loss of generality, we set $A_{-1} = A_0 = 0.$ Fix any $(\vu, \vv) \in \gX \times \sR^n$. 
By combining the bounds on $\phi_k(\vx_k)$ and $\psi_k(\vy_k)$ from Lemma \ref{lem:block-psi-phi-lower}, we have $\forall k \ge 1$ that 
\begin{equation}\label{eq:vr-phi-psi-2}
\begin{aligned}
&\phi_k(\vx_k) + \psi_k(\vy_k)   \\
 \ge\;&    \phi_{k-1}(\vx_{k-1}) +  \psi_{k-1}(\vy_{k-1})   \\
& \quad  - m a_k  \innp{\vx_k - \vu,  \mA^T(\vy_k - \vy_{k-1})}   +  m a_{k-1} \innp{\vx_{k-1} - \vu, \mA^T(\vy_{k-1} - \vy_{k-2})}  \\ 
& \quad +P_k  + Q_k,
\end{aligned} 
\end{equation}
where 
\begin{equation} \label{eq:vr-P_k-1}
    \begin{aligned}
        P_k =  \frac{\gamma + \sigma A_{k-1}}{2}\|\vx_k - \vx_{k-1}\|^2+\frac{1}{2\gamma}\|\vy_k - \vy_{k-1}\|^2+  m a_{k-1} \innp{\vx_k - \vx_{k-1}, \mA^T(\vy_{k-1} - \vy_{k-2})}
    \end{aligned}
\end{equation}
and $Q_k$ is defined in Eq.~\eqref{eq:Qk}.

% Observe that the first line in Eq.~\eqref{eq:vr-phi-psi-2} gives the desired recursive relationship for the sum % of estimate sequences, while the second line telescopes. \sjw{True, but is this fact really relevant to mention at this point?} Thus, we only need to focus on bounding $P_k$. 
To find a lower bound on $P_k$ we start by bounding the magnitude of the inner product term in \eqref{eq:vr-P_k-1}. Recall that $\vy_{k-2}$ and $\vy_{k-1}$ differ only on the coordinate block $S^{j_{k-1}}.$ We thus have:
\begin{align}
 & | ma_{k-1}\langle \vx_{k} - \vx_{k-1}, \mA^T(\vy_{k-1} - \vy_{k-2})\rangle |\nonumber \\
=\; & | ma_{k-1}\ \langle\mA^{S^{j_{k-1}}}(\vx_{k} - \vx_{k-1}), \vy_{k-1}^{S^{j_{k-1}}} - \vy_{k-2}^{S^{j_{k-1}}}\rangle |\nonumber \\
\leq\; &ma_{k-1}\|\mA^{S^{j_{k-1}}}(\vx_{k} - \vx_{k-1})\|\|\vy_{k-1}^{S^{j_{k-1}}} - \vy_{k-2}^{S^{j_{k-1}}}\| \nonumber  \\
\leq\; &  (ma_{k-1})^2\gamma\|\mA^{S^{j_{k-1}}}(\vx_{k} - \vx_{k-1})\|^2 + \frac{1}{4\gamma}\|\vy_{k-1}^{S^{j_{k-1}}} - \vy_{k-2}^{S^{j_{k-1}}}\|^2 \nonumber   \\ 
\leq\; &  (m \hat{L} a_{k-1})^2\gamma   \|\vx_{k} - \vx_{k-1}\|^2 + \frac{1}{4\gamma}\|\vy_{k-1}^{S^{j_{k-1}}} - \vy_{k-2}^{S^{j_{k-1}}}\|^2 \nonumber      \\ 
=\; &  (m \hat{L}a_{k-1})^2\gamma   \|\vx_{k} - \vx_{k-1}\|^2 + \frac{1}{4\gamma}\|\vy_{k-1} - \vy_{k-2}\|^2
\label{eq:main-thm-1}
\end{align}
where the second inequality is  by Young's inequality, and the third inequality is by Assumption~\ref{ass:L-hat} where $\|\mA^{S^{j_{k-1}}}(\vx_{k} - \vx_{k-1})\| \le \hat{L} \|\vx_{k} - \vx_{k-1}\|.$
With our setting $a_{k-1} = \frac{\sqrt{1+\sigma A_{k-2}/\gamma }}{2m\hat{L}}\le \frac{\sqrt{1+\sigma A_{k-1}/\gamma }}{2m\hat{L}}$, for all $k \ge 1$, we have
\[
(m \hat{L} a_{k-1})^2 \gamma = \frac{\gamma + \sigma A_{k-1}}{4}.
\]
By substituting this equality into \eqref{eq:main-thm-1} and then combining with \eqref{eq:vr-P_k-1}, we obtain
\begin{align}
    P_k \geq \;&  \frac{\gamma+\sigma A_{k-1}}{4}  \|\vx_{k} - \vx_{k-1}\|^2 +  \frac{1}{2\gamma}\|\vy_k - \vy_{k-1}\|^2 - \frac{1}{4\gamma}\|\vy_{k-1} - \vy_{k-2}\|^2\nonumber  \\
     \geq \;&  \frac{1}{2\gamma}\|\vy_k - \vy_{k-1}\|^2 - \frac{1}{4\gamma}\|\vy_{k-1} - \vy_{k-2}\|^2. \label{eq:vr-P_k-final}
\end{align}
We complete the proof by combining Eqs.~\eqref{eq:vr-phi-psi-2}, \eqref{eq:vr-P_k-1} and the bound Eq.~\eqref{eq:vr-P_k-final} for $P_k$.
\end{proof}

By telescoping the inequality in Lemma~\ref{lem:vr-psi-phi}, and using Lemmas~\ref{lem:block-psi-phi-upper} and \ref{lem:expec-max}, we obtain the next result.  
\begin{lemma}\label{lem:gap-no-expectation}
For all $(\vu, \vv)\in\gX\times\sR^n,$ we have     
\begin{equation}
\begin{aligned}
\;&  A_k(\gL(\tilde{\vx}_k, \vv) - \gL(\vu, \tilde{\vy}_k))  + \frac{\gamma+ \sigma A_k}{4}\|\vu - \vx_k\|^2  + \frac{1}{2\gamma}\|\vv - \vy_k\|^2   \\
\leq \;& \frac{\gamma}{2}\|\vu - \vx_0\|^2 +   \frac{1}{2\gamma}\|\vv - \vy_0\|^2  - \frac{1}{4\gamma}\sum_{i=1}^{k}\|\vy_i - \vy_{i-1}\|^2   + \frac{1}{\gamma}\sum_{i=1}^k \innp{\vy_{i-1},  \check{\vv}_i}   -  \frac{1}{\gamma}\sum_{i=1}^k \innp{ \vv,  \check{\vv}_i}, 
\end{aligned} \label{eq:gap-1}
\end{equation}
where $\check{\vv}_i$ is defined in Lemma \ref{lem:expec-max}. 
\end{lemma}

\begin{proof}
Telescoping the inequality in Lemma \ref{lem:vr-psi-phi},  we have
\begin{align}
\phi_k(\vx_k) + \psi_k(\vy_k)  
\ge\;& \phi_{0}(\vx_{0}) +  \psi_{0}(\vy_{0}) \nonumber \\
& - m a_k \innp{\vx_k - \vu,  \mA^T(\vy_k - \vy_{k-1})} +  m a_{0}\innp{\vx_{0} - \vu, \mA^T(\vy_{0} - \vy_{-1})}  \notag\\
&  +  \frac{1}{2\gamma}\|\vy_k - \vy_{k-1}\|^2  -  \frac{1}{4\gamma}\|\vy_{0} - \vy_{-1}\|^2 + \frac{1}{4\gamma}\sum_{i=1}^{k-1}\|\vy_i - \vy_{i-1}\|^2    + \sum_{i=1}^k Q_i \nonumber  \\
=\;& - m a_k \innp{\vx_k - \vu,  \mA^T(\vy_k - \vy_{k-1})} + \frac{1}{4\gamma}\|\vy_k - \vy_{k-1}\|^2\notag\\
&+ \frac{1}{4\gamma}\sum_{i=1}^{k}\|\vy_i - \vy_{i-1}\|^2  + \sum_{i=1}^k Q_i,\label{eq:vr-psi-phi-4}
\end{align}
where the last equality is by the fact that $\psi_0(\vy_0) = \phi_0(\vx_0) = 0$, $a_0=0$, and our convention that $\vy_{-1}:=\vy_0$. 

%\vq
% We now proceed to bound $\E\big[\sum_{i=1}^k Q_i\big].$ 
% Observe that 
% \begin{align*}
%     \E\big[\langle \vb^{S^{j_i}}, \vy_i^{S^{j_i}} - \vv^{S^{j_i}}\rangle\big] &= \E\Big[\E\Big[\langle \vb, \vy_i - \vv\rangle -\langle \vb^{\bar{S}^{j_k}},   \vy_{i}^{\bar{S}^{j_k}} - \vv_{i}^{\bar{S}^{j_k}}\rangle\Big|\gF_{i-1}\Big]\Big]\\
%     &= \E\Big[\vb^T(\vy_i - \vv) - \frac{m-1}{m}  \vb^T(\vy_{i-1} - \vv)\Big],
% \end{align*}
% where $\gF_{i-1}$ is the natural filtration, containing all randomness up to and including iteration $i-1.$  Therefore, we can bound $\E\big[\sum_{i=1}^k Q_i\big]$ as follows: 
%
Then by the convexity of $r(\cdot)$ and the definition of $\{Q_i\},$ we have 
\begin{align}
\sum_{i=1}^k Q_i \ge\;& A_k(r(\vxt_k) - r(\vu) - \langle \mA\vu, \vyt_k\rangle + \langle\vxt_k, \mA^T\vv\rangle + \langle \vxt_k - \vu, \vc\rangle + \langle \vb, \vyt_k - \vv\rangle)   \nonumber\\
\;&+ \frac{1}{\gamma}\sum_{i=1}^k \innp{\vy_{i-1} - \vv, - (\vy_i - \vy_{i-1}) +  \frac{1}{m}(\vyh_i - \vy_{i-1})}           \nonumber\\
=\;& A_k(\gL(\tilde{\vx}_k, \vv) - \gL(\vu, \tilde{\vy}_k)) +     \frac{1}{\gamma}\sum_{i=1}^k \innp{\vy_{i-1} - \vv, - \check{\vv}_i}.    \label{eq:vr-psi-phi-5}
\end{align}
where   $\tilde{\vx}_k = \frac{1}{A_k}\sum_{i=1}^k a_i \vx_i, \tilde{\vy}_k =  \frac{1}{A_k}\sum_{i=1}^k (a_i \vy_i + (m-1)a_i(\vy_i - \vy_{i-1}))$ (as defined in \eqref{eq:vxyk}) and the last equality is by the definition of the Lagrangian $\gL(\cdot, \cdot)$ and $\{\check{\vv}_i\}$ in Lemma \ref{lem:expec-max}.

Then by combining Eqs.~\eqref{eq:vr-psi-phi-4}-\eqref{eq:vr-psi-phi-5} and Lemma \ref{lem:block-psi-phi-upper}, we have 
\begin{align*}
\; &   A_k(\gL(\tilde{\vx}_k, \vv) - \gL(\vu, \tilde{\vy}_k))             \\
\leq \; &   \Big(\psi_k(\vy_k) + \phi_k(\vx_k)  +
m a_k \innp{\vx_k - \vu,  \mA^T(\vy_k - \vy_{k-1})}\Big) -  \frac{1}{4\gamma}\|\vy_k - \vy_{k-1}\|^2   \\ 
 \; & - \frac{1}{4\gamma}\sum_{i=1}^{k}\|\vy_i - \vy_{i-1}\|^2 + \frac{1}{\gamma}\sum_{i=1}^k \innp{\vy_{i-1},  \check{\vv}_i}    -  \frac{1}{\gamma}\sum_{i=1}^k \innp{ \vv,  \check{\vv}_i}  \\ 
 \leq \; &  \Big(    \frac{1}{2\gamma}\|\vv - \vy_0\|^2 - \frac{1}{2\gamma}\|\vv - \vy_k\|^2  +  \frac{\gamma}{2}\|\vu - \vx_0\|^2 - \frac{\gamma + \sigma A_k}{2}\|\vu - \vx_k\|^2 \Big)\\
&\quad\quad  +  m a_k \innp{\vx_k - \vu,  \mA^T(\vy_k - \vy_{k-1})}  -  \frac{1}{4\gamma}\|\vy_k - \vy_{k-1}\|^2  - \frac{1}{4\gamma}\sum_{i=1}^{k}\|\vy_i - \vy_{i-1}\|^2 \\
 \; & + \frac{1}{\gamma}\sum_{i=1}^k \innp{\vy_{i-1},  \check{\vv}_i}    -  \frac{1}{\gamma}\sum_{i=1}^k \innp{ \vv,  \check{\vv}_i}.    \\ 
\end{align*}

% \cb{Now the value of $a_k$ needs to be further verified. }

Finally, we have from the fact that $\vy_k$ and $\vy_{k-1}$ differ in only the $S^{j_k}$ components, Young's inequality, and the definition of $a_k$ that 
\begin{align}
 & | ma_{k}\langle \vx_{k} - \vu, \mA^T(\vy_k - \vy_{k-1})\rangle |\nonumber \\
=\; & | ma_{k}\ \langle\mA^{S^{j_{k}}}(\vx_{k} - \vu), \vy_{k}^{S^{j_{k}}} - \vy_{k-1}^{S^{j_{k}}}\rangle |\nonumber \\
\leq\; &ma_{k}\|\mA^{S^{j_{k}}}(\vx_{k} - \vu)\|\|\vy_{k}^{S^{j_{k}}} - \vy_{k-1}^{S^{j_{k}}}\| \nonumber  \\
\leq\; &  (ma_{k})^2\gamma\|\mA^{S^{j_{k}}}(\vx_{k} - \vu)\|^2 + \frac{1}{4\gamma}\|\vy_{k}^{S^{j_{k}}} - \vy_{k-1}^{S^{j_{k}}}\|^2 \nonumber   \\ 
\leq\; &    (m\hat{L}a_{k})^2\gamma \|\vx_{k} - \vu\|^2 + \frac{1}{4\gamma}\|\vy_{k}^{S^{j_{k}}} - \vy_{k-1}^{S^{j_{k}}}\|^2 \nonumber      \\ 
=\; &  (m\hat{L} a_{k})^2\gamma \|\vx_{k} - \vu\|^2 + \frac{1}{4\gamma}\|\vy_k - \vy_{k-1}\|^2\nonumber      \\ 
=\; &  \frac{\gamma + \sigma A_{k-1}}{4}\|\vx_{k} - \vu\|^2 + \frac{1}{4\gamma}\|\vy_k - \vy_{k-1}\|^2\nonumber\\
\leq\; &  \frac{\gamma + \sigma A_{k}}{4}\|\vx_{k} - \vu\|^2 + \frac{1}{4\gamma}\|\vy_k - \vy_{k-1}\|^2\nonumber
\end{align}
leading to
\begin{equation}
\begin{aligned}
\;&  A_k(\gL(\tilde{\vx}_k, \vv) - \gL(\vu, \tilde{\vy}_k)) + \frac{1}{2\gamma}\|\vv - \vy_k\|^2 + \frac{\gamma+ \sigma A_k}{4}\|\vu - \vx_k\|^2    \\
\leq \;& \frac{\gamma}{2}\|\vu - \vx_0\|^2 +   \frac{1}{2\gamma}\|\vv - \vy_0\|^2  - \frac{1}{4\gamma}\sum_{i=1}^{k}\|\vy_i - \vy_{i-1}\|^2  \notag  \\
    \;&     + \frac{1}{\gamma}\sum_{i=1}^k \innp{\vy_{i-1},  \check{\vv}_i}   -  \frac{1}{\gamma}\sum_{i=1}^k \innp{ \vv,  \check{\vv}_i}   
\end{aligned}
\end{equation}
and completing the proof. 
\end{proof}

\begin{lemma} \label{lem:sum-yk-x-y-star} 
Suppose that $(\vx^*,\vy^*)$ is a Nash point for \eqref{eq:pd-glp}.
Then the iterates $\vx_k, \vy_k$ from Algorithm~\ref{alg:clvr-impl} satisfy 
\begin{equation}\label{eq:sum-yk-x-y-star} 
\begin{aligned}
& \E\Big[\frac{\gamma+ \sigma A_k}{4}\|\vx^* - \vx_k\|^2 + \frac{1}{2\gamma}\|\vy^* - \vy_k\|^2 + \frac{1}{4\gamma}\sum_{i=1}^{k}\|\vy_i - \vy_{i-1}\|^2\Big]  \\
& \le   \frac{\gamma}{2}\|\vx^* - \vx_0\|^2 +   \frac{1}{2\gamma}\|\vy^* - \vy_0\|^2,   
% \E\Big[  \Big]  \le   \frac{\gamma}{2}\|\vx^* - \vx_0\|^2 +   \frac{1}{2\gamma}\|\vy^* - \vy_0\|^2.  \label{eq:x-y-star} 
\end{aligned}
\end{equation}
where the expectation is w.r.t.~all the randomness in the algorithm. 
\end{lemma}
\begin{proof}
Note that the existence of a Nash point is assumed in Assumption~\ref{assmpt:Nash}.
With $(\vu, \vv) = (\vx^*, \vy^*)$, by the definition of Nash equilibrium,  we have  
\begin{align}
 \gL(\tilde{\vx}_k, \vy^*) - \gL(\vx^*, \tilde{\vy}_k) =  (\gL(\tilde{\vx}_k, \vy^*) - \gL(\vx^*, \vy^*)) - (\gL(\vx^*, \tilde{\vy}_k) - \gL(\vx^*, \vy^*))  \ge 0. \label{eq:gap-2} 
\end{align}
By setting $(\vu,\vv)=(\vx^*,\vy^*)$ in the result of Lemma~\ref{lem:gap-no-expectation}, using \eqref{eq:gap-2} to eliminate the first term on the left-hand side of the inequality, and rearranging, we obtain
\begin{align*}
    & \frac{\gamma+ \sigma A_k}{4}\|\vx^* - \vx_k\|^2  + \frac{1}{2\gamma}\|\vy^* - \vy_k\|^2 +
    \frac{1}{4\gamma}\sum_{i=1}^{k}\|\vy_i - \vy_{i-1}\|^2  \\
    & \le \frac{\gamma}{2}\|\vx^* - \vx_0\|^2 +   \frac{1}{2\gamma}\|\vy^* - \vy_0\|^2  
    + \frac{1}{\gamma}\sum_{i=1}^k \innp{\vy_{i-1},  \check{\vv}_i}   -  \frac{1}{\gamma}\sum_{i=1}^k \innp{ \vv,  \check{\vv}_i}.
\end{align*}
The result will follow when we show that the expectation (with respect to all the randomness) of the last two terms on the right-hand side is zero. Since $\vy_{i-1}\in\gF_{i-1}$, we have 
\begin{align}
\E\left[\sum_{i=1}^k \innp{\vy_{i-1},  \check{\vv}_i}\right] =\;&  \E\left[\E\left[ \sum_{i=1}^k \innp{\vy_{i-1},  \check{\vv}_i}\Big| \gF_{i-1}\right]\right] =  \E\left[\sum_{i=1}^k \innp{\vy_{i-1},  \E\left[\check{\vv}_i| \gF_{i-1} \right] } \right]  = 0,   \label{eq:gap-3}
\end{align}
which takes care of the second-last term. For any fixed $\vv\in\sR^n,$ we also have 
\begin{align}
 \E\bigg[\frac{1}{\gamma}\sum_{i=1}^k \innp{ \vv,  \check{\vv}_i}\bigg] = 0, \label{eq:xstar-check-v}  
\end{align}
which takes care of the last term, and completes the proof.
\end{proof}

Next we state a technical lemma, proved in an earlier paper, which provides the final ingredient for the proof of Theorem~\ref{thm:clvr}. 

\begin{restatable}{lemma}{lemseqgrowth}[\cite{song2021variance}]\label{lemma:es-seq-growth}
Let $\{A_k\}_{k \geq 0}$ be a sequence of nonnegative real numbers such that $A_0 = 0$ and $A_k$ is defined recursively via $A_k = A_{k-1} +  \sqrt{c_1^2 + c_2A_{k-1}}$, where $c_1 > 0,$ and $c_2 \ge 0$.
Define $K_0 = \lceil\frac{c_2}{9c_1}\rceil.$ Then 
\begin{equation}\notag
A_k \geq
    \begin{cases}
        \frac{c_2}{9}\Big(k - K_0 + \max\Big\{{3\sqrt{\frac{c_1}{c_2}},\, 1\Big\}\Big)^2}, &\text{ if } c_2 > 0 \text{ and } k > K_0,\\
        c_1 k, &\text{ otherwise}.%
    \end{cases}
\end{equation}
\end{restatable}

% Based on Lemma \ref{lem:gap-no-expectation}, 
We are now ready to prove our main result, which we restate here.
\mainthmvr*
\begin{proof}
To provide a guarantee in terms of primal-dual gap, we need to take the supremum on $\vu, \vv$ of $\{\gL(\tilde{\vx}_k, \vv) - \gL(\vu, \tilde{\vy}_k))\}$ over $\gW_k$; we denote the $\arg\sup$ by $(\hat{\vu},\hat{\vv})$.
% \sjw{Previously this said "we need to take the supremum on $\vu, \vv$ for the left hand side of Eq.~\eqref{eq:gap-1} in Lemma \ref{lem:gap-no-expectation}" but that's not actually true.}
When we subsequently take the expectation, we have to account for the fact that $\hat{\vv}$ will be correlated with the randomness in the iteration history. 
We can however, use Lemmas~\ref{lem:expec-max} and \ref{lem:sum-yk-x-y-star} to give the upper bound on  $\E\Big[-\sum_{i=1}^k \innp{ \hat{\vv},  \check{\vv}_i}\Big]$.

% So, it follows that 
% \begin{equation}
% \begin{aligned}
% \;&  A_k(\gL(\tilde{\vx}_k, \vv) - \gL(\vu, \tilde{\vy}_k))  + \frac{\gamma+ \sigma A_k}{4}\|\vu - \vx_k\|^2 + \frac{1}{2\gamma}\|\vv - \vy_k\|^2    \\
% \leq \;& \frac{\gamma}{2}\|\vu - \vx_0\|^2 +   \frac{1}{2\gamma}\|\vv - \vy_0\|^2  - \frac{1}{4\gamma}\sum_{i=1}^{k}\|\vy_i - \vy_{i-1}\|^2    \\
%     \;&     + \frac{1}{\gamma}\sum_{i=1}^k \innp{\vy_{i-1},  \check{\vv}_i}   -  \frac{1}{\gamma}\sum_{i=1}^k \innp{ \vv,  \check{\vv}_i}.    
% \end{aligned} 
% \end{equation}

% \cb{Need to change $\vy\in\sR^n$ to a compact set. }
% \cb{I don't think that taking $\sup$ on both sides corresponds is the tightest way. The tightest way is to take $\sup$ on the left hand side and let the $\vu,\vv$ on the right hand side being the value of the left hand side.}

% Given a compact convex set $\gW\subset\gX\times \sR^d$, let $(\hat{\vu}, \hat{\vv}) = \arg\sup_{(\vu,\vv) \in \gW}   \{\gL(\tilde{\vx}_k, \vv) - \gL(\vu, \tilde{\vy}_k))\}$. 
From \eqref{eq:gap-1}, using the fact that $\frac{1}{2\gamma}\|\vv - \vy_k\|^2 + \frac{\gamma+ \sigma A_k}{4}\|\vu - \vx_k\|^2 \ge 0$, we have 
\begin{align}
\;&\E\Big[  A_k \sup_{(\vu, \vv)\in\gW_k}\{\gL(\tilde{\vx}_k, \vv) - \gL(\vu, \tilde{\vy}_k)\})\Big] \nonumber   \\
\leq \;& \E\Big[  \frac{\gamma}{2}\|\hat{\vu} - \vx_0\|^2 +   \frac{1}{2\gamma}\| \vy_0-\hat{\vv}\|^2 \Big]  - \frac{1}{4\gamma}\E\Big[\sum_{i=1}^{k}\|\vy_i - \vy_{i-1}\|^2\Big]   + \frac{1}{\gamma}\E\Big[\sum_{i=1}^k \innp{\vy_{i-1},  \check{\vv}_i}\Big]   \nonumber   \\
    \;&     +  \frac{1}{\gamma}\E\Big[-\sum_{i=1}^k \innp{ \hat{\vv},  \check{\vv}_i}\Big] \nonumber   \\
\leq \;& \E\Big[  \frac{\gamma}{2}\|\hat{\vu} - \vx_0\|^2 +   \frac{1}{2\gamma}\| \vy_0-\hat{\vv}\|^2 \Big]  - \frac{1}{4\gamma}\E\Big[\sum_{i=1}^{k}\|\vy_i - \vy_{i-1}\|^2\Big]    \nonumber   \\
\;&  + \frac{1}{\gamma} \E\Big[\frac{1}{2}\|\vy_0-\hat{\vv}\|^2  + \frac{1}{2} \sum_{i=1}^{k}\|\vy_i - \vy_{i-1}\|^2\Big] \nonumber   \\
= \;&  \E\Big[  \frac{\gamma}{2}\|\hat{\vu} - \vx_0\|^2 +   \frac{1}{\gamma}\|\vy_0-\hat{\vv}\|^2 \Big]  + \frac{1}{4\gamma}\E\Big[\sum_{i=1}^{k}\|\vy_i - \vy_{i-1}\|^2\Big]\nonumber   \\
\leq \;&  \E\Big[  \frac{\gamma}{2}\|\hat{\vu} - \vx_0\|^2 +   \frac{1}{\gamma}\|\vy_0-\hat{\vv}\|^2 \Big] + \frac{\gamma}{2}\|\vx^* - \vx_0\|^2 +   \frac{1}{2\gamma}\|\vy^* - \vy_0\|^2. 
\label{eq:sup1}
\end{align}
where the first inequality is by  Eq.~\eqref{eq:gap-1}, the second inequality is by Lemma \ref{lem:expec-max} and \eqref{eq:gap-3}, and the last inequality is by Lemma~\ref{lem:sum-yk-x-y-star}.  This proves the first claim \eqref{eq:ks6}. 
The second claim  \eqref{eq:solution-distance} follows from Lemma \ref{lem:sum-yk-x-y-star} with the fact that $\frac{1}{4\gamma}\E\Big[\sum_{i=1}^{k}\|\vy_i - \vy_{i-1}\|^2\Big]\geq 0$.   
% Meanwhile, as both $\vy_{i-1}\in\gF_{i-1}$ and $\check{\vy}_{i-1}\in\gF_{i-1}$ and $\E[ \check{\vv}_i|\gF_{i-1}] = 0$ , we have 
% \begin{equation}
%  \E\Big[\sum_{i=1}^k \innp{\vy_{i-1},  \check{\vv}_i}\Big] = 0, \E\Big[\sum_{i=1}^k \innp{  \check{\vy}_{i-1},  \check{\vv}_i}\Big] = 0.\label{eq:sup3}
% \end{equation}
% So, combining Eqs.~\eqref{eq:sup1}-\eqref{eq:sup3}, it follows that 
% \begin{align}
% \;&\E\Big[  A_k \sup_{(\vu, \vv)\in\gW}\{\gL(\tilde{\vx}_k, \vv) - \gL(\vu, \tilde{\vy}_k)\})\Big] \nonumber   \\
%  \le\;&  \E\Big[\frac{\gamma}{2}\|\hat{\vu} - \vx_0\|^2 +   \frac{1}{\gamma}\|\hat{\vv} - \vy_0\|^2 \Big] +  \frac{\gamma}{2}\|\vx^* - \vx_0\|^2 +   \frac{1}{2\gamma}\|\vy^* - \vy_0\|^2.
% \end{align}
The final claim concerning $A_k$ follows from Lemma~\ref{lemma:es-seq-growth} when we set 
\[
c_1 = \frac{1}{2 \hat{L}m}, \quad c_2 = \frac{\sigma}{(2 \hat{L}m)^2 \gamma}.
\]
% and the definitions of $\{a_k\}, \{A_k\}$,  
\end{proof}

%\lemseqgrowth*
%
\propobjconstraint*
\begin{proof}
Assume that  $\|\mA\tilde{\vx}_k - \vb\| \neq 0,$ as otherwise the first bound follows trivially. 
Let $\vu = \vx^*$ and $\vv = \frac{2\|\vy^*\|(\mA\tilde{\vx}_k - \vb)}{\|\mA \tilde{\vx}_k - \vb\|}$. Then we have 
\begin{align}
 & \gL(\tilde{\vx}_k, \vv) - \gL(\vu, \tilde{\vy}_k) \nonumber  \\    
= \;&(\vc^T \tilde{\vx}_k + r( \tilde{\vx}_k)   + 2\|\vy^*\| \|\mA\tilde{\vx}_k - \vb\|) - (\vc^T\vx^* + r(\vx^*)   +  \tilde{\vy}_k^T(\mA \vx^*   - \vb)) \nonumber \\
= \;& (\vc^T(\tilde{\vx}_k - \vx^*) + r( \tilde{\vx}_k) -  r(\vx^*)   )  +  2\|\vy^*\| \|\mA\tilde{\vx}_k  - \vb\|.  \label{eq:corollary1}
\end{align}

For any fixed $\vu$, and any $\vv \in \sR^n$ possibly depending on the randomness in the algorithm, we have from Lemma~\ref{lem:gap-no-expectation}, taking expectation over all the randomness, that 
% \sjw{previously this said "$\vv$ fixed" which is wrong in our case.} \sjw{In expression below, put $\E[]$ around $\|\vv - \vy_0\|^2 $.}
% For any fixed $(\vu,\vv)\in\gX\times\sR^n,$ in Lemma \ref{lem:gap-no-expectation}, taking expectation on the history of all the randomness, we have 
\begin{align}
\;&  A_k\E[\gL(\tilde{\vx}_k, \vv) - \gL(\vu, \tilde{\vy}_k)] \nonumber  \\
\le \;& \E\left[ A_k(\gL(\tilde{\vx}_k, \vv) - \gL(\vu, \tilde{\vy}_k)) + \frac{1}{2\gamma}\|\vv - \vy_k\|^2 + \frac{\gamma+ \sigma A_k}{4}\|\vu - \vx_k\|^2\right] \nonumber    \\
\leq \;& \frac{\gamma}{2}\|\vu - \vx_0\|^2 +   \frac{1}{2\gamma} \E [ \|\vv - \vy_0\|^2 ] - \E\Big[\frac{1}{4\gamma}\sum_{i=1}^{k}\|\vy_i - \vy_{i-1}\|^2\Big] \nonumber    \\
    \;&     + \frac{1}{\gamma}\E\Big[\sum_{i=1}^k \innp{\vy_{i-1},  \check{\vv}_i}\Big]   -  \frac{1}{\gamma}\E\Big[\sum_{i=1}^k \innp{ \vv,  \check{\vv}_i}\Big].   \label{eq:corollary2}
\end{align}
Meanwhile, we have 
% \sjw{Please be much more explicit in this step, giving explicit references to earlier arguments, possibly adding more detail. I couldn't verify this step yet.}
\begin{align*}
\nonumber
 &- \frac{1}{4\gamma}\E\Big[\sum_{i=1}^{k}\|\vy_i - \vy_{i-1}\|^2\Big]      -  \frac{1}{\gamma}\E\Big[\sum_{i=1}^k \innp{ \vv,  \check{\vv}_i}\Big]  \\
& \le \frac{1}{2\gamma}\E\Big[\|\vy_0 - \vv\|^2 \Big]   + \frac{1}{4\gamma} \E\Big[\sum_{i=1}^{k}\|\vy_i - \vy_{i-1}\|^2\Big]   \nonumber  \\
& \le \frac{1}{2\gamma}\E[\|\vy_0-\vv\|^2] + \frac{\gamma}{2}\|\vx^* - \vx_0\|^2 +   \frac{1}{2\gamma}\|\vy^* - \vy_0\|^2,  
% \label{eq:corollary3}
\end{align*}
where the first inequality is by Lemma \ref{lem:expec-max} and the second inequality is by Lemma \ref{lem:sum-yk-x-y-star}.  
Since  $\vy_{i-1}\in\gF_{i-1}, $ we have as in \eqref{eq:gap-3} that 
% \sjw{I added the last phrase.}
\begin{align}
 \frac{1}{\gamma}\E\Big[\sum_{i=1}^k \innp{\vy_{i-1},  \check{\vv}_i}\Big] = 0.   \label{eq:corollary4}
\end{align}
By combining Eq.~\eqref{eq:corollary1}-\eqref{eq:corollary4} with $\vu = \vx^*$, we have
\begin{align}
\;&\E\big[(\vc^T(\tilde{\vx}_k - \vx^*) + r( \tilde{\vx}_k) -  r(\vx^*)   )  +  2\|\vy^*\| \|\mA\tilde{\vx}_k  - \vb\|\big]      \nonumber\\
\le\;&\frac{\gamma\|\vx^* - \vx_0\|^2 + \frac{1}{\gamma}\E[\|\vv - \vy_0\|^2] + \frac{1}{2\gamma}\|\vy^* - \vy_0\|^2}{A_k}.  \label{eq:corollary5}
\end{align}

%  the following inequalities hold: Given any compact convex set $\gW\in\gX\times\sR^d,$ 
% \begin{align}
%  \;&\E\Big[\sup_{(\vu, \vv)\in\gW}\{\gL(\tilde{\vx}_k, \vv) - \gL(\vu, \tilde{\vy}_k)\}\Big] \nonumber   \\
%  \le\;& \frac{1}{A_k}\bigg( \E\Big[\frac{\gamma}{2}\|\hat{\vu} - \vx_0\|^2 +   \frac{1}{\gamma}\|\hat{\vv} - \vy_0\|^2 \Big] +  \frac{\gamma}{2}\|\vx^* - \vx_0\|^2 +   \frac{1}{2\gamma}\|\vy^* - \vy_0\|^2\bigg),
% \end{align}
% where $(\hat{\vu}, \hat{\vv}) = \arg\sup_{(\vu, \vv)\in\gW}\{\gL(\tilde{\vx}_k, \vv) - \gL(\vu, \tilde{\vy}_k)\}. $

% Then by Theorem \ref{thm:clvr}, and the fact that  $\mA\vx^* = \vb$, we have 
% \begin{align}
% \;& \frac{\gamma \|\vu-\vx_0\|^2 + 4\|\vv - \vy_0\|^2 /\gamma}{2A_k} \nonumber  \\     
%  \ge\;& \gL(\E[\tilde{\vx}_k], \vv) - \gL(\vu, \E[\tilde{\vy}_k]) \nonumber  \\     
% = \;& (\vc^T\E[\tilde{\vx}_k] + r(\E[\tilde{\vx}_k])   + 2\|\vy^*\| \|\mA\E[\tilde{\vx}_k] - \vb\|) - (\vc^T\vx^* + r(\vx^*)   + \E[\tilde{\vy}_k]^T(\mA \vx^*   - \vb)) \nonumber \\
% = \;& (\vc^T(\E[\tilde{\vx}_k] - \vx^*) + r(\E[\tilde{\vx}_k]) -  r(\vx^*)   )  +  2\|\vy^*\| \|\mA\E[\tilde{\vx}_k] - \vb\|. 
% \label{eq:obj-constraint-1}    
% \end{align}

By the KKT condition of \eqref{eq:pd-glp} and the optimality of $(\vx^*, \vy^*)$, we have for all $\vx\in\gX$ that
\begin{align}
(\vc^T\vx + r(\vx)) - (\vc^T\vx^* + r(\vx^*)) - \langle \vy^*, \mA \vx - \vb\rangle \ge 0,
\end{align}
and thus
\begin{align}
 (\vc^T\vx + r(\vx)) - (\vc^T\vx^* + r(\vx^*)) \ge -  \|\vy^*\|\| \mA \vx - \vb\|. \label{eq:obj-constraint-2}   
\end{align}
By combining Eq.~\eqref{eq:corollary5} and Eq.~\eqref{eq:obj-constraint-2}, we have 
\begin{align}
\E[\|\vy^*\|\cdot \|\mA\tilde{\vx}_k - \vb\|] \le \frac{\gamma\|\vx^* - \vx_0\|^2 + \frac{1}{\gamma}\E[\|\vv - \vy_0\|^2] + \frac{1}{2\gamma}\|\vy^* - \vy_0\|^2}{A_k},  \label{eq:corollary6}
\end{align}
proving our first claim. The second claim is obtained by combining Eqs.~\eqref{eq:corollary5}, \eqref{eq:obj-constraint-2}, and \eqref{eq:corollary6}.
%% \begin{align}
%%  |\E[(\vc^T\tilde{\vx}_k + r(\tilde{\vx}_k)] - (\vc^T\vx^* + r(\vx^*))|  \le\;&   \frac{\gamma\|\vx^* - \vx_0\|^2 + \frac{1}{\gamma}\E[\|\vv - \vy_0\|^2] + \frac{1}{2\gamma}\|\vy^* - \vy_0\|^2}{A_k}, \notag 
% (\vc^T\E[\tilde{\vx}_k] + r(\E[\tilde{\vx}_k])) - (\vc^T\vx^* + r(\vx^*))   \le\;& \frac{\gamma\|\vx^* - \vx_0\|^2 + \frac{1}{\gamma}\E[\|\vv - \vy_0\|^2] + \frac{1}{2\gamma}\|\vy^* - \vy_0\|^2}{A_k}, \notag
%% \end{align}
%% as claimed.
\end{proof}

\subsection{Omitted proofs from Section \ref{sec:lazy}}\label{app:lazy}

We need to compute explicitly only those components of $\vq_{k-1}$ and $\vx_k$ that are needed to update $\vy_k$.
\begin{restatable}{lemma}{lemmaqk} \label{lem:qk}
For $\{\vq_k\}$ defined in Algorithm~\ref{alg:clvr-impl}, we have 
\begin{equation}
    \notag%\label{eq:uv7}
    \vq_k = A_{k+1} (\vc+\vz_k) + \vr_k, \quad k=0,1,2,\dotsc,
\end{equation}
where $\vr_0=0$ and for all $k=1,2,\dotsc$
\begin{equation}
    \notag%\label{eq:uv8}
    % \vr_k = \sum_{l=1}^k (ma_l - A_l)(\vz_l-\vz_{l-1}), \quad k=1,2,\dotsc.
    \vr_k = \vr_{k-1} + (ma_{k}-A_{k}) (\vz_{k}-\vz_{k-1}).
\end{equation}
\end{restatable}
\begin{proof}
The proof is by induction. For $k=0$, we have $\vq_0 = A_1(\vc+\vz_0)$ which is true by definition. Assuming that the result holds for some index $k$, we show that it continues to hold for $k+1$. We have from Step~\ref{line:a1l9} of Algorithm~\ref{alg:clvr-impl}, then using the inductive assumption, that
\begin{align*}
    \vq_{k+1} & = a_{k+2} (\vz_{k+1}+\vc) + ma_{k+1} (\vz_{k+1}-\vz_k) + \vq_k \\
    &= a_{k+2} (\vz_{k+1}+\vc) + ma_{k+1} (\vz_{k+1}-\vz_k) + A_{k+1} (\vc+\vz_k) + \vr_k \\
    &= A_{k+2} \vc + (A_{k+2}-A_{k+1}) \vz_{k+1} + ma_{k+1} (\vz_{k+1}-\vz_k) + A_{k+1} \vz_k + \vr_k \\
    &= A_{k+2} (\vc+\vz_{k+1}) + (ma_{k+1}-A_{k+1}) (\vz_{k+1}-\vz_k) +\vr_k \\
    &= A_{k+2} (\vc+\vz_{k+1}) + \vr_{k+1},
\end{align*}
as required.
\end{proof}

This lemma indicates that we can reconstruct $\vq_k$ (or any subvector of $\vq_k$ that we need, on demand), provided we maintain $\vz_k$ and $\vr_k$. Note that the update from $\vz_k$ to $\vz_{k+1}$ is sparse; these two vectors differ only in the components corresponding to the block $C^{j_k}$.
To obtain  $\vr_{k+1}$ from $\vr_k$, we need to add a scalar times the sparse vector $\vz_{k+1}-\vz_k$, so this update is also efficient.

We can also maintain an implicit representation of the averaged vector $\vyt_k$ efficiently, as shown in the following lemma. We defer the proofs of Lemmas~\ref{lem:qk} and~\ref{lem:yk} to Appendix~\ref{appx:proofs-general}.
\begin{restatable}{lemma}{lemmayk} \label{lem:yk}
For $\{\vyt_k\}$ defined in \eqref{eq:vxyk}, we have 
\begin{align}\notag
\vyt_k = \vy_k + \frac{1}{A_{k}}\vs_k, \quad k=1,2,\dotsc,     
\end{align}
where $\vs_0 = \vzero$ and for all $k=1,2,\dotsc,$
\begin{align}\notag
\vs_{k} = \vs_{k-1} + ((m-1)a_k- A_{k-1})(\vy_k - \vy_{k-1}).
\end{align}
\end{restatable}
%

% \lemmaqk*
% \cb{I think that the reasoning of $q_k$ has been stated in \eqref{eq:qk}.} \sjw{Yes, but I think the form in this lemma is more elementary and transparent. It doesn't require the use of $\theta_i$ for instance.}

% \lemmayk*
\begin{proof}
Recall that  $\vyt_K\; (K\ge 1)$  is defined in  Step 11 of Algorithm \ref{alg:clvr-impl}. 
The proof is by induction. For $k=1$, $\vyt_1 = \vy_1 + \frac{1}{A_1}(m-1)a_1(\vy_1 - \vy_{0}) = \vy_1 + \frac{1}{A_1}\vs_1$ which is true by the definition of $\vs_1$ and $A_0$. Next, we assume that the result holds for some $k\ge 2$ and show that it continues to hold for $k+ 1$. We have
\begin{align}
A_{k+1} \vyt_{k+1} =\;& \sum_{i=1}^{k+1} (a_i \vy_i + (m-1)a_{i}(\vy_{i} - \vy_{i-1})) \nonumber \\  
=\;& A_k \vyt_{k} + a_{k+1} \vy_{k+1} + (m-1)a_{k+1}(\vy_{k+1} - \vy_{k})         \nonumber \\  
=\;&  A_{k} \vy_k + \vs_k       + a_{k+1} \vy_{k+1} + (m-1)a_{k+1}(\vy_{k+1} - \vy_{k})   \nonumber \\ 
=\;&  A_{k} (\vy_k - \vy_{k+1} + \vy_{k+1}) + \vs_k       + a_{k+1} \vy_{k+1} + (m-1)a_{k+1}(\vy_{k+1} - \vy_{k})   \nonumber \\ 
=\;&A_{k+1} \vy_{k+1} + \vs_k  + ((m-1)a_{k+1} - A_k) (\vy_{k+1} - \vy_{k}) \nonumber \\ 
=\;& A_{k+1} \vy_{k+1} + \vs_{k+1}.  \notag
\end{align}
as required.
\end{proof}

\subsection{Omitted proofs from Section~\ref{sec:restart}} \label{appendix:restart}
The standard-form LP \eqref{eq:pd-lp} is derived by setting $r(\vx) \equiv 0$ and $\gX = \{\vx \in \sR^d: x_i \geq 0, i \in [d]\}$  in \eqref{eq:pd-glp}.
Given any $\vw\in\gX\times\sR^n,$ we define a compact convex subset $\gW_{\zeta, \gamma}(\vw)$ of $\gX\times\sR^n$ as follows:
\begin{align}
 \gW_{\zeta, \gamma}(\vw) = \{\hat{\vw}\in\gX\times\sR^n: \|\hat{\vw} - \vw\|_{(\gamma)} \le \zeta, \zeta >0, \gamma > 0\}, \label{eq:w-r-gamma}
\end{align}
where we have defined $\|\cdot\|_{(\gamma)}$ by $\| \vw\|_{(\gamma)} = \sqrt{\gamma\|\vx\|^2 + \frac{1}{\gamma}\|\vy\|^2}$ in Section~\ref{sec:restart}. 

\begin{lemma}\label{lem:gap-metric}
% \sjw{You don't mention any conditions on $\vw$. What are they? Just $\vw\in\gX\times\sR^n$? Please specify.}\cb{done}
Consider the standard-form LP  \eqref{eq:lp}.
Given $\tau>0$ and $\vw\in\gX\times\sR^n$, if $\|\vw\|_{(\gamma)}\le \tau$ and $\zeta\le \tau$, then
\begin{align}
 \sup_{\hat{\vw}\in\gW_{\zeta,\gamma}(\vw)}\{\gL(\vx, \vyh) - \gL(\vxh, \vy)\} \ge \frac{\zeta}{\gamma + 1/\gamma + \tau}\emph{LPMetric}(\vw).     
\end{align}
\end{lemma}
\begin{proof}
Let $\mF(\vw) = \begin{bmatrix}
 \mA^T\vy + \vc  \\ 
 \vb - \mA\vx 
\end{bmatrix}.$  
Then we have
\begin{align}
\rho_{\zeta,\gamma}(\vw) := \sup_{\vwh\in\gW_{\zeta,\gamma}(\vw)}  \{\gL(\vx, \vyh) - \gL(\vxh,\vy)\} =  \sup_{\vwh\in\gW_{\zeta,\gamma}(\vw)}  \mF(\vw)^T(\vw - \vwh)\ge 0, \label{eq:gap-metric0}
\end{align}
where the inequality follows from $\vw \in \gW_{\zeta,\gamma}(\vw)$.

First, we prove 
\begin{align}
\rho_{\zeta,\gamma}(\vw) \ge  \frac{\zeta\|\max\{-\mA^T\vy - \vc,\, \vzero\}\|_2}{\gamma}. \label{eq:gap-metric1}
\end{align}
If $\|\max\{-\mA^T\vy - \vc,\, \vzero\}\|_2 > 0,$ for $\vwh_1  = (\vxh_1, \vyh_1)$, let $\vxh_1 = \vx + \frac{\zeta}{\gamma \|\max\{-\mA^T\vy - \vc,\, \vzero\}\|_2} \max\{-\mA^T\vy - \vc,\, \vzero\}\in\gX$ and $\vyh_1 = \vy$. Then we have $\|\vwh_1 - \vw\|_{(\gamma)} = \zeta$ and thus $\vwh_1\in\gW_{\zeta,\gamma}(\vw)$. So, as $\rho_{\zeta,\gamma}(\vw) \ge \mF(\vw)^T(\vw - \vwh_1),$ with the definition of $\vwh_1,$ Eq.~\eqref{eq:gap-metric1} holds. If $\|\max\{-\mA^T\vy - \vc,\, \vzero\}\|_2 = 0,$ then Eq.~\eqref{eq:gap-metric1} holds trivially. 

Second, we prove 
\begin{align}
\rho_{\zeta,\gamma}(\vw) \ge  \zeta\gamma\|\mA\vx-\vb\|_2.  \label{eq:gap-metric2}
\end{align}
If $\|\mA\vx-\vb\|_2 > 0,$ for $\vwh_2  = (\vxh_2, \vyh_2)$, let $\vxh_2 = \vx$ and $\vyh_2 = \vy + \frac{\zeta \gamma}{\|\mA\vx-\vb\|}(\mA\vx-\vb)$. Then we have $\|\vwh_2 - \vw\|_{(\gamma)} = \zeta$ and thus $\vwh_2\in\gW_{\zeta,\gamma}(\vw)$. Then  as $\rho_{\zeta,\gamma}(\vw) \ge \mF(\vw)^T(\vw - \vwh_2),$  Eq.~\eqref{eq:gap-metric2} holds. If $\|\mA\vx-\vb\|_2  = 0,$ then Eq.~\eqref{eq:gap-metric2} holds trivially.

Third, we prove that
% \sjw{Here I had to modify this paragraph to add some discussion that $\mF(\vw)^T\vw $ which is needed for \eqref{eq:gap-metric4} to hold.}
\begin{align}
\rho_{\zeta,\gamma}(\vw) \ge  \frac{\zeta}{\tau}\max\{\vc^T \vx + \vb^T\vy,\, \vzero\}.    \label{eq:gap-metric3}
\end{align}
Note that the inequality holds trivially if $\vc^T \vx + \vb^T\vy\le 0$, so we assume that  $\vc^T \vx + \vb^T\vy> 0$, and note that $\mF(\vw)^T\vw  = \vc^T \vx + \vb^T\vy> 0$ in this case. 
This condition implies that  $\vw \ne 0$, so we can define  $\vwh_3 = \vw - \min\big\{\frac{\zeta}{\|\vw\|_{(\gamma)}}, 1\big\}\vw$. 
Then we have $\|\vwh_3 - \vw\|_{(\gamma)} \le \frac{\zeta}{\|\vw\|_{(\gamma)}}\|\vw\|_{(\gamma)}\le \zeta.$ Meanwhile, we also have $\vxh_3\ge \vx_3 - \vx_3 =0,$ so that  $\vwh_3 \in \gX\times\sR^n.$ 
Thus, we have $\vwh_3\in\gW_{\zeta,\gamma}(\vw)$. 
Then, with the assumptions $\zeta\le \tau$ and $\|\vw\|_{(\gamma)}\le \tau,$ together with $\mF(\vw)^T\vw >0$, we have 
\begin{align}
\rho_{\zeta,\gamma}(\vw) \ge \min\Big\{\frac{\zeta}{\|\vw\|_{(\gamma)}}, 1\Big\}  \mF(\vw)^T\vw \ge \min\Big\{\frac{\zeta}{\tau}, 1\Big\}\mF(\vw)^T\vw = \frac{\zeta}{\tau}\mF(\vw)^T\vw = \frac{\zeta}{\tau}(\vc^T \vx + \vb^T\vy),  \label{eq:gap-metric4}
\end{align}
completing the proof of the claim \eqref{eq:gap-metric3}.
% \sjw{removed some unnecessary text.}

%% Combining Eqs.~\eqref{eq:gap-metric0} and \eqref{eq:gap-metric4}, we have 
%% \begin{align}
%% \rho_{\zeta,\gamma}(\vw) \ge \min\Big\{\frac{\zeta}{\|\vw\|_{(\gamma)}}, 1\Big\}  \max\{\mF(\vw)^T\vw, 0\},    
%% \end{align}
%% verifying \eqref{eq:gap-metric4}.
% So, denote the event $\gE: \|\vw\|_{(\gamma)} \le 2\tau,$ then we have 
% \begin{align}
% \E[\rho_{\zeta,\gamma}(\vw) ] \ge\;&  P(\|\vw\|_{(\gamma)} \le 2\tau) * \E\Big[\min\Big\{\frac{\zeta}{\|\vw\|_{(\gamma)}}, 1\Big\}  \max\{\mF(\vw)^T\vw, 0\} \Big| \gE  \Big] \nonumber \\  
% =\;& \frac{1}{2}  \E\Big[\min\Big\{\frac{\zeta}{2\tau}, 1\Big\}  \max\{\mF(\vw)^T\vw, 0\} \Big| \gE  \Big]    
% \end{align}

%% Combining Eqs.~\eqref{eq:gap-metric0} and \eqref{eq:gap-metric4}, Eq.~\eqref{eq:gap-metric3} holds. If $\|\vw\|_{(\gamma)}=0,$ then Eq.~\eqref{eq:gap-metric3} holds trivially.

% \sjw{Simplified a little here.}
By combining Eqs.~ \eqref{eq:gap-metric1}, \eqref{eq:gap-metric2} and \eqref{eq:gap-metric3}, we obtain
\begin{align}
(\gamma + 1/\gamma + \tau)^2\rho_{\zeta,\gamma}^2(\vw)
%% \ge\;&\zeta^2(\|\max\{-\mA^T\vy - \vc,0\}\|_2 + \|\mA\vx-\vb\|_2 + \max\{\vc^T \vx + \vb^T\vy, 0\})^2 \nonumber\\  
\ge\;&\zeta^2(\|\max\{-\mA^T\vy - \vc,0\}\|_2^2 + \|\mA\vx-\vb\|_2^2 + \max\{\vc^T \vx + \vb^T\vy, 0\}^2) \nonumber\\  
\ge\;& \zeta^2 (\text{LPMetric}(\vw))^2,
\end{align}
from which the result follows.
%% Then after simple arrangements and using the definition of $\rho_{\zeta,\gamma}$, the claimed bound follows. 
%
% With the optimality condition of $\vw^*$, it is known that
% \begin{align}
% \mF(\vw)^T(\vw - \vw^*) \ge \;&   \mF(\vw^*)^T(\vw - \vw^*)  \ge 0.   \end{align}
%
% Meanwhile, we have 
% \begin{align}
% \mF(\vw)^T(\vw - \vw^*) =\;&   \vc^T \vx + \vb^T\vy - \langle \mA^T\vy, \vx^*\rangle + \langle \mA\vx, \vy^*\rangle -\vc^T\vx^*-\vb^T\vy^* \nonumber \\   
%  = \;& \vc^T \vx + \vb^T\vy  -\langle  \mA^T\vy + \vc , \vx^*\rangle + \langle \mA\vx - \vb, \vy^*\rangle \nonumber \\
%   = \;& |\vc^T \vx + \vb^T\vy  -\langle  \mA^T\vy + \vc , \vx^*\rangle + \langle \mA\vx - \vb, \vy^*\rangle| \nonumber \\ 
%   \ge \;& |\vc^T \vx + \vb^T\vy|  -    |\langle  \mA^T\vy + \vc , \vx^*\rangle| - |\langle \mA\vx - \vb, \vy^*\rangle|, 
% %   \ge \;& \vc^T \vx + \vb^T\vy  - \|\vx^*\|\|\mA^T\vy + \vc\| - \| \vy^*\| \| \mA\vx - \vb\|,
% \end{align}
% where the last inequality is by the fact that $\vx^*\ge 0$ and the Cauchy-Schawarz inequality. 
%
% Meanwhile due to the fact that $\vx^*\ge 0$, we have 
% \begin{align}
% \|     
% \end{align}
% If $\|\vw\|_{(\gamma)}>0,$ then let $\vwh_3 = \vw - \min\{\frac{r}{\|\vw\|_{(\gamma)}}, 1\}\vw,$ then 
\end{proof}
% \begin{assumption}
% For all the iterates $\{\vw_k = (\vx_k, \vy_k)\}$, there exists an absolute constant such that $\|\vw\| \le .$
% \end{assumption}

% \begin{proposition}
% In $O()$-expected time, we have 
% \begin{align}
% \|\tilde{\vw}_k - \vw^*\|\le \|\vw_0 - \vw^*\|.     
% \end{align}
% \end{proposition}

\begin{lemma}\label{lem:clvr-lem}
Consider the standard-form LP  \eqref{eq:lp}, and let $\vw^*=(\vx^*,\vy^*)$ be a Nash point (that is, a solution of the primal-dual form \eqref{eq:pd-lp}). 
For a starting point $\vw_0$, define $\zeta = \|\vw_0 - \vw^*\|_{(\gamma)}$  and choose $\gamma>0$.
Then for all $k=1,2,\dotsc$, we have 
\begin{align}
\E[\|\vwt_k - \vw^*\|_{(\gamma)}] \le \sqrt{2}\|\vw_0 - \vw^*\|_{(\gamma)},  \nonumber
\end{align}
where the expectation is taken w.r.t.~all the randomness up to iteration $k$. 
% Meanwhile, we also have: with high probability, 
% \begin{align}
% \|\vw_k - \vw^*\|_{(\gamma)}  \le \sqrt{2} \|\vw_0 - \vw^*\|_{(\gamma)}. 
% \end{align}
Further, for $\gW_{\zeta,\gamma}(\tilde\vw_k)$ defined as in \eqref{eq:w-r-gamma}, we have
% \sjw{Here you talk about $\gW_{\zeta,\gamma}$ as if it is a fixed set. But it's not defined until the proof - $\zeta$ is not specified and as I point out, the center of the set keeps moving.}
\begin{align}
\E\Big[\sup_{\vw\in\gW_{\zeta,\gamma}(\tilde\vw_k)}\{\gL(\tilde{\vx}_k, \vy) - \gL(\vy, \tilde{\vy}_k)\}\Big]  
    \le \;&  \frac{25 \hat{L}m}{k} \|\vw_0-\vw^*\|_{(\gamma)}^2. 
\end{align}
\end{lemma}
\begin{proof}
By Theorem \ref{thm:clvr},  we have 
\[
\E\Big[\frac{\gamma}{4}\|\vx^* - \vx_k\|^2 + \frac{1}{2\gamma}\|\vy^* - \vy_k\|^2\Big] \le  \frac{\gamma}{2}\|\vx^* - \vx_0\|^2 +   \frac{1}{2\gamma}\|\vy^* - \vy_0\|^2 = \frac{1}{2}\|\vw_0 - \vw^*\|_{(\gamma)}^2,     
\]
by the definition of $\| \cdot \|_{(\gamma)}$. Using this definition again, we have that the left-hand side in this expression is bounded below by $\frac14 \| \vw_k - \vw^* \|^2_{(\gamma)}$, so that 
\begin{align}
\E\Big[\|\vw_k - \vw^*\|^2_{(\gamma)}] \le 2  \|\vw_0 - \vw^*\|_{(\gamma)}^2,  \quad  \forall k \geq 1. \label{eq:wk-w-start}
\end{align}
By convexity of $\| \cdot \|^2$, we obtain
\begin{align}
\E[\|\vwt_k - \vw^*\|_{(\gamma)}^2] = \E\Big[ \Big\|\frac{1}{k}\sum_{i=1}^k (\vw_i - \vw^*) \Big\|_{(\gamma)}^2\Big]\le \frac{1}{k}\E\Big[ \sum_{i=1}^k  \|\vw_i - \vw^*\|_{(\gamma)}^2\Big] \le 2 \|\vw_0 - \vw^*\|_{(\gamma)}^2.    \label{eq:w-hat-k-w-start}
\end{align}
The first claim of the lemma now follows by applying Jensen's inequality to this bound.
%% Using Jensen's inequality, 
%% \begin{align}
%% \E[\|\vwt_k - \vw^*\|_{(\gamma)}] \le \sqrt{\E[\|\vwt_k - \vw^*\|_{(\gamma)}^2]} \le \sqrt{2}\|\vw_0 - \vw^*\|_{(\gamma)}. %    
%% \end{align}

% \cb{Here we also need to prove the high probability result.  }

For 
% given set $\gW_{\zeta,\gamma}( \tilde{\vw}_k)$ with $\zeta = \|\vw_0-\vw^*\|_{(\gamma)}\in(0,\infty)$ and 
$\vwh=(\vxh, \vyh) \in \gW_{\zeta,\gamma}(\tilde{\vw}_k)$,  we have the following bound on $\E\Big[\gamma\|\hat{\vx} - \vx_0\|^2 +   \frac{1}{\gamma}\|\hat{\vy} - \vy_0\|^2 \Big]$: 
\begin{align}
&\E\Big[\gamma\|\hat{\vx} - \vx_0\|^2 +   \frac{1}{\gamma}\|\hat{\vy} - \vy_0\|^2 \Big]  \nonumber   \\ 
=\;& \E\big[\|\vw_0-\hat{\vw}\|_{(\gamma)}^2\big]  \nonumber   \\ 
=\;& \E\big[\|\vw_0-\vw^* + \vw^* - \tilde{\vw}_k +\tilde{\vw}_k- \vwh\|_{(\gamma)}^2\big]  \nonumber   \\ 
\le \;&\E\big[(\|\vw_0-\vw^*\|_{(\gamma)} + \|\vw^* - \tilde{\vw}_k\|_{(\gamma)} + \|\tilde{\vw}_k- \vwh\|_{(\gamma)})^2\big]  \nonumber   \\ \le \;&\E\big[3(\|\vw_0-\vw^*\|_{(\gamma)}^2 + \|\vw^* - \tilde{\vw}_k\|_{(\gamma)}^2 + \|\tilde{\vw}_k- \vwh\|_{(\gamma)}^2)\big]  \nonumber   \\ 
\le \;& \E\big[3(\|\vw_0-\vw^*\|_{(\gamma)}^2 + 2\|\vw^* - \vw_0\|_{(\gamma)}^2 + \|\vw_0-\vw^*\|_{(\gamma)}^2 )\big]  \nonumber \\
\label{eq:kx1}
= \;&  12\|\vw_0-\vw^*\|_{(\gamma)}^2,
\end{align}
where the first inequality is by the triangle inequality, the second inequality is by the arithmetic inequality, the third inequality is by Eq.~\eqref{eq:w-hat-k-w-start} and our assumption $\hat{\vw} \in \gW_{\zeta, \gamma}(\tilde{\vw}_k)$ with $\zeta = \|\vw_0-\vw^*\|_{(\gamma)}$. 
% \sjw{Had to say again that $\vwh \in \gW_{\zeta,\gamma}(\tilde{\vw}_k)$.}

For the case of linear programming, the strong convexity constant is $\sigma = 0$,  so that $A_k = \frac{k}{2 \hat{L} m}$ in  \clvr.
Thus, by applying   Theorem~\ref{thm:clvr} with $\gW_k = \gW_{\zeta,\gamma}(
\tilde{\vw}_k)$ and $\vwh = (\hat{\vx}, \hat{\vy}) = \arg\sup_{\vw = (\vx, \vy)\in\gW_{\zeta,\gamma}(
\tilde{\vw}_k)}\{\gL(\tilde{\vx}_k, \vy) - \gL(\vx, \tilde{\vy}_k)\}$, and using the definition of $\| \cdot \|_{(\gamma)}$ and Eq.~\eqref{eq:kx1}, we have 
% \sjw{I totally don't understand the first inequality. Theorem 1 applied to a fixed $\gW$. Here we are using $\gW_{\zeta,\gamma}(\tilde{\vw}_k)$ which is different for every $k$.} \cb{Thanks for pointing out. In my understanding, $\gW$ can be any fixed convex set in $\gX\times \sR^n$ and $\vwt_k$ is also fixed before taking sup in $\gW$, so I think that it is OK.}
\begin{align}
 \;&\E\Big[\sup_{\vw\in\gW_{\zeta,\gamma}(\tilde\vw_k)}\{\gL(\tilde{\vx}_k, \vy) - \gL(\vy, \tilde{\vy}_k)\}\Big] \nonumber   \\
 \le\;& \frac{2 \hat{L}m}{k}\bigg( \E\Big[\frac{\gamma}{2}\|\hat{\vx} - \vx_0\|^2 +   \frac{1}{\gamma}\|\hat{\vy} - \vy_0\|^2 \Big] +  \frac{\gamma}{2}\|\vx^* - \vx_0\|^2 +   \frac{1}{2\gamma}\|\vy^* - \vy_0\|^2\bigg) \nonumber   \\
  \le\;&   \frac{2 \hat{L}m}{k}\Big(\E\big[\|\vw_0 - \hat{\vw}\|^2_{(\gamma)}\big] + \frac{1}{2}\|\vw_0 - \vw^*\|_{(\gamma)}^2\Big)  \nonumber   \\
  \le \;& \frac{2 \hat{L}m}{k}\Big(12  \|\vw_0-\vw^*\|_{(\gamma)}^2  + \frac{1}{2}\|\vw_0 - \vw^*\|_{(\gamma)}^2\Big) \nonumber   \\   
  \le \;&  \frac{25 \hat{L}m}{k} \|\vw_0-\vw^*\|_{(\gamma)}^2.
\end{align}
% When applying Theorem \ref{thm:clvr} into LP, with the definition of the norm $\|\vw\|_{(\gamma)} = \sqrt{\gamma\|\vx-\vx^*\|^2 + \frac{1}{\gamma}\|\vy - \vy^*\|^2},$ we have: $\forall k \ge1, \vw = (\vx, \vy)$
% \begin{align}
% \E[\sup_{(\vu,\vv)\in\gX\times \sR^d} \gL(\tilde{\vx}_k, \vy) - \gL(\vx, \tilde{\vy}_k)] \leq\;&  \frac{\sqrt{2} \hat{L} m \|\vw - \vw_0\|^2}{2k}, \label{eq:gap-restart}\\
% \E[\|\vw_k - \vw^*\|^2]  \leq\;&  2 \|\vw_0 - \vw^*\|^2.  \label{eq:dist-restart}
% \end{align}
\end{proof}

% \begin{theorem}
% For the \clvr~ algorithm with the input $\vw_0$ and the output $\vwt_k$, with $\tau = \|\vw^*\| + \sqrt{2}\|\vw_0 - \vw^*\|,$ we have 
% \begin{align}
% \E[\emph{dist}(\tilde{\vw}_k, \gW^*)       ] \le\;&   \frac{3 \hat{L}m(\gamma + 1/\gamma + \tau)}{H_{\gamma}  k} \emph{dist}(\vw_0, \gW^*)   \nonumber   \\
%  \E[\emph{LPMetric}(\tilde{\vw}_k)] \le\;&    \frac{3 \hat{L}m(\gamma + 1/\gamma + \tau)}{H_{\gamma}  k} \emph{LPMetric}(\vw_0).  
% \end{align}
% \end{theorem}

Then with Theorem \ref{thm:clvr}, Lemmas~\ref{lem:gap-metric} and \ref{lem:clvr-lem}, we give our theorem for the fixed restart strategy.  
\propLPMetric*
%\cb{The main issue for the first bound is we can only get guarantee for the expectation of  $\E\Big[\sqrt{ \text{dist}(\tilde{\vw}_k, \gW^*)_{(\gamma)}}\,\Big]$ not the expectation $\E\Big[ \text{dist}(\tilde{\vw}_k, \gW^*)_{(\gamma)}\,\Big]$. As a result, we can not give the conclusion that $\E\Big[ \text{dist}(\tilde{\vw}_k, \gW^*)_{(\gamma)}\,\Big]\le \|\vw_0 - \vw^*\|$, which makes the restart quantity $C_0 = \gamma + 1/\gamma +    (\sqrt{2} + 1) \|\vw_0 - \vw^*\|_{(\gamma)} + \|\vw^*\|_{(\gamma)}$ not be stable int theory.           }. \jd{Restart itself ensures that $C_0$ remains bounded. This is because it \emph{deterministically} holds that $\mathrm{dist}(\tilde{\vw}_k, \gW^*)$ can be bounded as a function of the LPMetric, and the LPMetric \emph{deterministically} contracts on each restart. Have discussed this with Chaobing, but adding the comment so that it doesn't get lost/forgotten.}

\begin{proof}
% For all $k\ge 1$, $r = \|\vw_0 - \vw^*\|_{(\gamma)}$ and $\gamma>0$, we have 
% \begin{align}
% \E\Big[\sup_{\vw\in\gW_{\zeta,\gamma}}\{\gL(\tilde{\vx}_k, \vy) - \gL(\vy, \tilde{\vy}_k)\}\Big]  
%     \le \;&  \frac{3 \hat{L}m}{k} \|\vw_0-\vw^*\|_{(\gamma)}^2 \nonumber \\
%     \E[\|\vw_k - \vw^*\|_{(\gamma)}] \le  \;&  2 \|\vw_0 - \vw^*\|_{(\gamma)},  \nonumber  
% \end{align}
% where the expectation is taken w.r.t. all the randomness up to $k$. 

% By Lemma \ref{lem:clvr-lem} and the definition of $\vwt_k,$ we have: with high probability, $\forall \vw^*\in\gW^*$,   
% \begin{align}
% \|\vwt_k - \vw^*\|_{(\gamma)} =  \|\frac{1}{k}\sum_{i=1}^k \vw_i - \vw^*\|_{(\gamma)}\le \frac{1}{k} \sum_{i=1}^k \|\vw_i - \vw^*\|_{(\gamma)} \le 2 \|\vw_0 - \vw^*\|_{(\gamma)}.    
% \end{align}

Applying Lemma \ref{lem:gap-metric} with $\vw = \tilde{\vw}_k,$ $\tau = \|\tilde{\vw}_k\|_{(\gamma)} + \|\vw_0 - \vw^*\|_{(\gamma)}$ and  $\zeta = \|\vw_0 - \vw^*\|_{(\gamma)} \le \tau$, we have 
\begin{align}
\sup_{\hat{\vw}\in\gW_{\zeta,\gamma}(\vw)}\{\gL(\vxt_k, \vyh) - \gL(\vxh, \vyt_k)\} \ge  \frac{\|\vw_0-\vw^*\|_{(\gamma)}\text{LPMetric}(\tilde{\vw}_k)}{\gamma + 1/\gamma +  \|\tilde{\vw}_k\|_{(\gamma)} + \|\vw_0 - \vw^*\|_{(\gamma)}}.     \label{eq:lem10app}
\end{align}

As $\frac{x^2}{y}$ is jointly convex in $x, y$ on the domain $\{(x,y): x\in\sR, y >0 \}$ \cite[Example 3.18]{boyd2004convex}
using Jensen's inequality, we have that $\E[\frac{x^2}{y}] \geq \frac{(\E[x])^2}{\E[y]}$. (In our case, this simply follows as $x, y$ depend on the same source of randomness.)
% and $\frac{1}{2}[x_1, y_1]^T + \frac{1}{2}[x_2, y_2]^T = [\frac{1}{2} x_1 + \frac{1}{2} x_2, \frac{1}{2} y_1 + \frac{1}{2} y_2]^T.$) \cb{I also think that this comment may be not very necessary or not very clear.} \sjw{I'm not sure why this second phrase involving the $\frac12$ is relevant.} 
Applying this inequality to \eqref{eq:lem10app} with $x = \sqrt{\text{LPMetric}(\tilde{\vw}_k)}$ and $y = \gamma + 1/\gamma +  \|\tilde{\vw}_k\|_{(\gamma)} + \|\vw_0 - \vw^*\|_{(\gamma)}$, we obtain
\begin{align}
\E\Big[ \sup_{\hat{\vw}\in\gW_{\zeta,\gamma}(\tilde\vw_k)}\{\gL(\vxt_k, \vyh) - \gL(\vxh, \vyt_k)\}\Big] \ge\;&  \E\bigg[ \frac{\|\vw_0-\vw^*\|_{(\gamma)} (\sqrt{\text{LPMetric}(\tilde{\vw}_k)})^2}{\gamma + 1/\gamma +  \|\tilde{\vw}_k\|_{(\gamma)} + \|\vw_0 - \vw^*\|_{(\gamma)}}\bigg] \nonumber \\ 
\ge\;&  \frac{\|\vw_0-\vw^*\|_{(\gamma)} \Big(\E\big[\sqrt{\text{LPMetric}(\tilde{\vw}_k)}\big]\Big)^2}{\E\big[\gamma + 1/\gamma +  \|\tilde{\vw}_k\|_{(\gamma)} + \|\vw_0 - \vw^*\|_{(\gamma)}\big]}.  \label{eq:restart1}
\end{align}
Using Lemma \ref{lem:clvr-lem}, we have 
\begin{align}
\;&\E\big[\gamma + 1/\gamma +  \|\tilde{\vw}_k\|_{(\gamma)} + \|\vw_0 - \vw^*\|_{(\gamma)}\big]\nonumber \\ 
=\;&  \gamma + 1/\gamma +     \|\vw_0 - \vw^*\|_{(\gamma)} + \E[\|\tilde{\vw}_k\|_{(\gamma)}] \nonumber \\ 
\le\;&\gamma + 1/\gamma +     \|\vw_0 - \vw^*\|_{(\gamma)} + \E[\|\vw^*\|_{(\gamma)} + \|\tilde{\vw}_k - \vw^*\|_{(\gamma)}]  \nonumber \\
\le\;&\gamma + 1/\gamma +     \|\vw_0 - \vw^*\|_{(\gamma)} + \|\vw^*\|_{(\gamma)} + \sqrt{2}\|\vw_0 - \vw^*\|_{(\gamma)}   \nonumber \\
=\;&\gamma + 1/\gamma +    (\sqrt{2} + 1) \|\vw_0 - \vw^*\|_{(\gamma)} + \|\vw^*\|_{(\gamma)}. \label{eq:restart2}
\end{align}
By combining Eqs.~\eqref{eq:restart1} and \eqref{eq:restart2} and using the definition of $C_0$, we have 
\begin{align}
\E\Big[ \sup_{\hat{\vw}\in\gW_{\zeta,\gamma}(\tilde\vw_k)}\{\gL(\vxt_k, \vyh) - \gL(\vxh, \vyt_k)\}\Big] \ge\;& \frac{ \|\vw_0 - \vw^*\|_{(\gamma)}}{C_0} \Big(\E\big[\sqrt{\text{LPMetric}(\tilde{\vw}_k)}\big]\Big)^2   \nonumber \\
\ge\;&   \frac{ \|\vw_0 - \vw^*\|_{(\gamma)}}{C_0} \Big(\E\big[\sqrt{H_{\gamma} \text{dist}(\tilde{\vw}_k, \gW^*)_{(\gamma)}}\big]\Big)^2, \label{eq:restart3}
\end{align}
where the last inequality is by the definition of Hoffman constant in Eq. \eqref{eq:LP-Metric}. 
Meanwhile, by Lemma \ref{lem:clvr-lem}, we have: 
\begin{align}
\E\Big[ \sup_{\hat{\vw}\in\gW_{\zeta,\gamma}(\tilde\vw_k)}\{\gL(\vxt_k, \vyh) - \gL(\vxh, \vyt_k)\}\Big] \le\;&   \frac{25\hat{L}m}{k} \|\vw_0-\vw^*\|_{(\gamma)}^2. \label{eq:restart4}
% \le\;&   \frac{3 \hat{L}m}{k} \text{dist}(\vw_0, \gW^*)^2  
\end{align}
Now, recalling that $\vw^* = \argmin_{\vw\in\gW^*}\|\vw_0 - \vw\|_{(\gamma)}$ and using Eq.~\eqref{eq:LP-Metric},  we have
\begin{equation}
    \label{eq:restart5}
 \|\vw_0 - \vw^*\|_{(\gamma)} = \text{dist}(\vw_0, \gW^*)_{(\gamma)}  \le \frac{1}{H_\gamma} \text{LPMetric}(\vw_0). 
\end{equation}
By combining Eqs.~\eqref{eq:restart3}-\eqref{eq:restart5}, we have 
\begin{align}
 \frac{1}{C_0} \Big(\E\Big[\sqrt{H_{\gamma} \text{dist}(\tilde{\vw}_k, \gW^*)_{(\gamma)}}\,\Big]\Big)^2  \le\;&  \frac{1}{C_0} \Big(\E\big[\sqrt{\text{LPMetric}(\tilde{\vw}_k)}\big]\Big)^2   \nonumber \\
  \le\;& \frac{25\hat{L}m}{k} \text{dist}(\vw_0, \gW^*)_{(\gamma)} \nonumber \\
  \le\;& \frac{25\hat{L}m}{H_{\gamma}k} \text{LPMetric}(\vw_0).  
\end{align}
Both bounds follow from this chain of inequalities.
 \end{proof}

% To obtain meaningful bounds for the restarted algorithm, it is necessary to ensure that $C_0$, which depends on the initial distance to the optimum at restart, $\|\vw_0 - \vw^*\|,$ remains bounded. This is a simple corollary of restarts \emph{deterministically} halving the LPMetric and the relationship between the distance and the LPMetric, as shown in the following proposition. 

% \begin{proposition}\label{claim:restart-constant}
% Consider one call to CLVR between two restarts and let $\vw_0$ denote the initial point and $\tilde{\vw}_k$ denote the output point passed to the next call to CLVR upon restart. Then 
% %
% \begin{equation}
%     \begin{aligned}
%      C_0^{(1)} &= \gamma + 1/\gamma +    (\sqrt{2} + 1) \|\tilde{\vw}_k - \vw^*\|_{(\gamma)} + \|\vw^*\|_{(\gamma)}\\
%      &\leq \gamma + 1/\gamma +    \frac{\sqrt{2} + 1}{2 H_\gamma}\mathrm{LPMetric}(\vw_0) + \|\vw^*\|_{(\gamma)}. 
%     \end{aligned}
% \end{equation}
% \end{proposition}

%
% \begin{proof}
% By the sharpness property, $\|\tilde{\vw}_k - \vw^*\|_{(\gamma)} \leq \frac{\mathrm{LPMetric}(\tilde{\vw}_k)}{H_\gamma}.$ 
% Since one call to CLVR between restarts halves the LPMetric, we have $\mathrm{LPMetric}(\tilde{\vw}_k) \leq \frac{1}{2}\mathrm{LPMetric}(\vw_0)$. Combining these two inequalities leads to the claimed bound. 
% \end{proof}
%
%We now show how to bound the complexity of CLVR with restarts. 
Finally, we summarize the adaptive restart strategy in Algorithm~\ref{alg:restarted-CLVR}. 
\begin{algorithm}[ht!]
\caption{Lazy CLVR with Adaptive Restarts}\label{alg:restarted-CLVR}
\begin{algorithmic}[1]
\STATE \textbf{Input:} $\epsilon > 0,$ $\vx_0 \in \gX, \vy_0 \in \sR ^n,$ $ \gamma>0$, $\hat{L} >0$,  $K$, $\widehat{K}$, $m$, $\{S^{1}, S^{2}, \ldots, S^{m}\}$, $\{C^{1}, C^{2}, \ldots, C^{m}\}$
\STATE $t = 0,$ $\vx_0^{(0)} = \vx_0$, $\vy_0^{(0)} = \vy_0, $  $\vw^{(0)} = (\vx_0^{(0)}, \vy_0^{(0)})$
\REPEAT
\STATE Run Lazy CLVR (Algorithm~\ref{alg:clvr-lazy2}) until $\mathrm{LPMetric}(\vw^{(t+1)}) \leq \frac{1}{2}\mathrm{LPMetric}(\vw^{(t)})$ where, $\vw^{(t+1)} = \tilde{\vw}^{(t)}_K = (\tilde{\vx}_K^{(t)}, \tilde{\vy}_K^{(t)})$ and $\tilde{\vx}_K^{(t)}, \tilde{\vy}_K^{(t)}$ are the output points of Lazy CLVR
\STATE $t = t+1$
\UNTIL{$\mathrm{LPMetric}(\vw^{(t)}) \leq \epsilon$}
\STATE \textbf{Return:}  $\vw^{(t)}$
\end{algorithmic}
\end{algorithm}
%
\iffalse %%%%%%%%%%%%%%%%%%%%%%
\jd{The theorem only works if we are outputting $\tilde{\vx}_k.$}
%
\begin{theorem}[CLVR with Restarts]\label{thm:CLVR-restarts}
Consider the restart strategy for (Lazy) CLVR described in Algorithm~\ref{alg:restarted-CLVR}. Then, the total number of expected arithmetic operations of Algorithm~\ref{alg:restarted-CLVR} is bounded by
$$
    O(...)
$$
\end{theorem}
%
\begin{proof}
Since every call to CLVR halves the LPMetric, it is immediate that the algorithm takes $O(\log(\frac{\mathrm{LPMetric}(\vw_0)}{\epsilon}))$ calls to CLVR until it halts. Thus, we only need to bound the expected number of iterations in one call to CLVR.

Consider iteration $r$ of the repeat-until loop in Algorithm~\ref{alg:restarted-CLVR}. By Lemma~\ref{lemma:LPMetric},  
%
\begin{equation}
    \E\big[\sqrt{\mathrm{LPMetric}(\tilde{\vw}_{k+1}^{(r)})}\big] \le  5\sqrt{\frac{\hat{L}m C_0^{(r)}}{H_{\gamma}k}}\sqrt{\mathrm{LPMetric}(\vw^{(r)})},
\end{equation}
%
where $\tilde{\vw}_{k+1}^{(r)}$ denotes the output point of CLVR after $k$ iterations. Let $K_r$ denote the number of iterations of CLVR on restart $r.$ Observe that $\sP[K_r \geq k] = \sP[\sqrt{\mathrm{LPMetric}(\tilde{\vw}_{k+1}^{(r)})} \geq \sqrt{\frac{1}{2}\mathrm{LPMetric}(\vw^{(r)})}] \leq 5\sqrt{\frac{2\hat{L}m C_0^{(r)}}{H_{\gamma}k}},$ where the last inequality comes from Markov's inequality. Further, as $K_r$ is nonnegative, its expectation can be estimated as follows:
%
\begin{align*}
    \E[K_r] &= \sum_{i=1}^{\infty} \sP[K_r \geq i] \\
\end{align*}
\end{proof}
\fi %%%%%%%%%%%%%%%%%%%%%%%%%%%%%%%

\section{Omitted proofs from Section~\ref{sec:dro}}\label{appx:dro-proof}

\subsection{DRO with $f$-divergence-based ambiguity set}

\thmfdivergence*
\begin{proof}
Since the context is clear, to simplify the notation, in the following we use $\gP$ to denote $\gP_{\rho, n}.$ 
%First, based on the assumption that a Nash (or saddle) point exists (Assumption~\ref{assmpt:Nash}), minimax equality holds~\cite[Theorem 2.7 and Theorem 3.3]{stoer1963duality} and we can write %
First, using Sion's minimax theorem, we have that
\begin{align*}
    \min_{\vx\in\gX}\sup_{\vp\in \gP} \sum_{i=1}^n p_i g(b_i\va_i^T\vx) = \sup_{\vp\in \gP}  \min_{\vx\in\gX}\sum_{i=1}^n p_i g(b_i\va_i^T\vx).
\end{align*}
Introducing auxiliary variables $\vw$ and $\vu$, the problem is further equivalent to
\begin{align*}
     \sup_{\vp\in \gP}   \min_{\substack{\vx\in\gX, \vw, \vu:\\
      u_i =  b_i\va_i^T\vx, i\in[n],\\
      g(u_i) \le w_i, i\in[n]}} \; \vp^T\vw \; \equiv\;  \min_{\substack{\vx\in\gX, \vw, \vu:\\
      u_i =  b_i\va_i^T\vx, i\in[n],\\
      g(u_i) \le w_i, i\in[n]}} \; \sup_{\vp\in \gP}\; \vp^T\vw,
\end{align*}
where the last equivalence is by applying minimax equality, which holds due to compactness of $\gP$~\cite{stoer1963duality}. %Sion's minimax theorem again. 
Hence, we can conclude that
\begin{equation}\label{eq:dro-f-00}
    \min_{\vx\in\gX}\sup_{\vp\in \gP} \sum_{i=1}^n p_i g(b_i\va_i^T\vx) \; = \;  \min_{\substack{\vx\in\gX, \vw, \vu:\\
      u_i =  b_i\va_i^T\vx, i\in[n],\\
      g(u_i) \le w_i, i\in[n]}} \; \sup_{\vp\in \gP}\; \vp^T\vw. 
\end{equation}

\iffalse %%%%%%%%%%%%%%%%%%%%%%%%%%%%%%%%%
\begin{align}
& \min_{\vx\in\gX}\sup_{\vp\in \gP_{\rho, n}} \sum_{i=1}^n p_i g(b_i\va_i^T\vx) \nonumber\\     
\equiv\;& \sup_{\vp\in \gP_{\rho, n}}  \min_{\vx\in\gX}\sum_{i=1}^n p_i g(b_i\va_i^T\vx)\nonumber\\    \equiv\;& \sup_{\vp\in \gP_{\rho, n}}  \min_{\vx\in\gX, \vw, \vu}\vp^T\vw \nonumber\\     
\st \;& u_i =  b_i\va_i^T\vx, i\in[n]         \nonumber            \\ 
    \;&      g(u_i) \le w_i, i\in[n]           \nonumber              \\      
\equiv\;&\min_{\vx\in\gX, \vw, \vu}\sup_{\vp\in \gP_{\rho, n}} \vp^T\vw \nonumber\\     
\st \;& u_i =  b_i\va_i^T\vx, i\in[n]         \nonumber            \\ 
    \;&      g(u_i) \le w_i, i\in[n].  \label{eq:dro-f-00}
\end{align}
where the first equivalence relationship is by the minimax theorem for convex-concave min-max problems, the second one is by introducing new auxiliary variables $\vw$ and $\vu$, the third one is by the minimax theorem again. 
\fi %%%%%%%%%%%%%%%%%%%%%%%%%%%%%%%%%%%%%%%%

For a fixed tuple $(\vx,\vw,\vu)$, using Lagrange multipliers to enforce the constraints from $\gP$, we can further write 
\begin{align}
&\; \sup_{\vp\in\gP} \vp^T\vw  =  - \inf_{\vp\in\gP} - \vp^T\vw  \notag  \\    
= \;& - \inf_{\vp\in\sR^n}\Big( - \vp^T\vw  + \sup_{\vv \geq 0,\gamma\in\sR, \mu \geq 0}\Big(-\vp^T\vv + \gamma\Big(\sum_{i=1}^n p_i - 1\Big)    + \mu\Big(D_f(\vp\|\mathbf{1}/n) - \frac{\rho}{n}\Big) \Big)\Big)  \notag  \\    
= \;&\sup_{\vp\in\sR^n} \inf_{\vv\ge 0,\gamma\in\sR, \mu\ge 0}\bigg( \vp^T\vw  +\vp^T\vv - \gamma\Big(\sum_{i=1}^n p_i - 1\Big)    - \mu\Big(D_f(\vp\|\mathbf{1}/n) - \frac{\rho}{n}\Big) \bigg). \notag %\\  
%=\;&\inf_{\vv\ge 0,\gamma\in\sR, \mu\ge 0} \sup_{\vp\in\sR^n} \bigg( \vp^T\vw  +\vp^T\vv - \gamma\Big(\sum_{i=1}^n p_i - 1\Big)    - \mu\Big(D_f(\vp\|\mathbf{1}/n) - \frac{\rho}{n}\Big) \bigg), \notag 
\end{align}
%where the last line is by strong duality, which applies due to Slater's condition. 

Now, using the definitions of $D_f$ and the convex conjugate of $f$, we have 
\begin{align}
D_f(\vp\|\mathbf{1}/n) = \sum_{i=1}^n \frac{1}{n} f(n p_i) = \sum_{i=1}^n \frac{1}{n} \sup_{\nu_i\in\dom(f^*)}\big(n p_i\nu_i - f^*(\nu_i)\big).  \label{eq:dro-f-0}
\end{align}
As a result, we have 
\begin{align}
& \inf_{\mu\ge 0} \; - \mu \Big( D_f(\vp\|\mathbf{1}/n) - \frac{\rho}{n}\Big)   \notag \\
= &\; \inf_{\mu\ge 0}   -  \frac{\mu }{n} \Big( \sum_{i=1}^n  \sup_{\nu_i\in\dom(f^*), i\in[n]}\big(n p_i\nu_i - f^*(\nu_i)\big) - \rho\Big)           \notag \\
= &\; \inf_{\mu\ge 0, \nu_i\in\dom(f^*), i\in[n] }\; - \frac{\mu}{n}\Big(\sum_{i=1}^n \big(n p_i\nu_i - f^*(\nu_i)\big) - \rho \Big)         \notag \\
= &\;  \inf_{\mu\ge 0, q_i\in \mu \dom(f^*), i\in[n] }\;   \frac{1}{n}\sum_{i=1}^n \big(- p_i q_i +\mu  f^*\big(\frac{q_i}{n\mu}\big)\big) + \frac{\mu \rho}{n}  \notag \\ 
= &\;\inf_{\substack{\mu_1 = \mu_2=\cdots = \mu_n \ge 0, \\ q_i\in \mu_i \dom(f^*), i\in[n] }}\; \frac{1}{n} \sum_{i=1}^n  \big(- p_i q_i +\mu_i  f^*\big(\frac{q_i}{n \mu_i}\big)\big) +  \frac{\mu_1 \rho}{n}  , 
\notag%\label{eq:dro-f-1}
\end{align}
where the first equality is by Eq.~\eqref{eq:dro-f-0}, the third one is by the variable substitution $q_i  = n \mu \nu_i$, and the last one is by introducing $\mu_1, \mu_2, \ldots, \mu_n$ to replace $\mu$.  Then each $n\mu_i f^*\big(\frac{q_i}{n\mu_i}\big) (i\in[n])$  is a perspective function of $f^*$ \cite{boyd2004convex}, which is jointly convex w.r.t.~$(\mu_i, q_i).$  Hence, we can conclude that
\begin{align*}
    &\;\sup_{\vp\in\gP} \vp^T\vw \\
    =\; &\;\sup_{\vp\in\sR^n} \inf_{\substack{\gamma\in \sR, \vv\geq \vzero,\\\mu_1 = \mu_2=\cdots = \mu_n \ge 0, \\ q_i\in \mu_i \dom(f^*), i\in[n] }}\bigg( \vp^T\vw  +\vp^T\vv - \gamma\Big(\sum_{i=1}^n p_i - 1\Big)  +  \frac{1}{n} \sum_{i=1}^n  \big(- p_i q_i +\mu_i  f^*\big(\frac{q_i}{n\mu_i}\big)\big) + \frac{\mu_1 \rho}{n}\bigg)\\
    =\; &\; \inf_{\substack{\gamma\in \sR, \vv\geq \vzero,\\\mu_1 = \mu_2=\cdots = \mu_n \ge 0, \\ q_i\in \mu_i \dom(f^*), i\in[n] }} \sup_{\vp\in\sR^n} \bigg( \vp^T\vw  +\vp^T\vv - \gamma\Big(\sum_{i=1}^n p_i - 1\Big)  +  \frac{1}{n} \sum_{i=1}^n  \big(- p_i q_i +\mu_i  f^*\big(\frac{q_i}{n\mu_i}\big) \big) + \frac{ \mu_1\rho}{n}\bigg),
\end{align*}
where the last line is by strong duality. 
% \sjw{How do you know that strong duality holds?}
%
% last equality is by the variable substitution $\nu_i = \mu q_i$. In Eq.~\eqref{eq:dro-f-1}, $\mu f^*\big(\frac{\nu_i}{\mu}\big)$ is a perspective function \cite{boyd2004convex}, which is joint convex w.r.t. $(\mu, \nu_i).$ The function $\sum_{i=1}^n \mu f^*\big(\frac{\nu_i}{\mu}\big)$ is not separable, which does not admit convenient proximal operators. As a result, we introduce $n$ variables $\mu_1, \mu_2, \ldots, \mu_n$ to replace the common  
Thus, combining with  Eq.~\eqref{eq:dro-f-00}, we conclude that the original DRO problem with $f$-divergence based ambiguity set is equivalent to the following problem:
\begin{align*}
\min_{\vx, \vu,  \vv, \vw, \vmu, \vq, \gamma} & \max_{\vp\in\sR^n} \;\Big\{\gamma  + \frac{\rho\mu_1}{n} + \frac{1}{n}\sum_{i=1}^n  \mu_i f^*\Big(\frac{q_i}{\mu_i}\Big) + \vp^T\Big(\vw + \vv - \frac{\vq}{n} - \gamma\mathbf{1}_n\Big)  \Big\} \label{eq:dro-f-2}\\
& \begin{alignedat}{2}
\st\; &\;  u_i =  b_i\va_i^T\vx, &&i\in[n]                    \\
&\;   \mu_1 = \mu_2=\cdots = \mu_n ,  \quad  &&               \\
&\;      g(u_i) \le w_i, && i\in[n]                           \\
&\;    q_i\in \mu_i \dom(f^*),  && i\in[n]              \\
&\;     v_i\ge 0, \mu_i\ge 0, &&                   i\in[n]     \\
&\;      \vx\in\gX.
\end{alignedat}
\end{align*}
%
%which is further equivalent to the problem of the form  \eqref{eq:glp} by taking derivative of $\vp$ in Eq.~\eqref{eq:dro-f-2}, 
Finally, noticing that the maximization problem over $\vp \in \sR^n$ enforces the equality constraint $\vw + \vv - \frac{\vq}{n} - \gamma\mathbf{1}_n = \vzero_n,$ we obtain
\begin{align*}
    & \min_{\vx, \vu,  \vv, \vmu, \vq, \gamma} \, \Big\{\gamma  + \frac{\rho\mu_1}{n} + \frac{1}{n}\sum_{i=1}^n  \mu_i f^*\big(\frac{q_i}{\mu_i}\big)  \Big\} \\
   & \begin{alignedat}{2}
    \st\;& \;   \vw + \vv - \frac{\vq}{n} - \gamma\mathbf{1}_n = \vzero_n, && \\
\;& u_i =  b_i\va_i^T\vx, &&i\in[n]                    \\
&\;   \mu_1 = \mu_2=\cdots = \mu_n ,                   \\
&\;      g(u_i) \le w_i, && i\in[n]                           \\
&\;    q_i\in \mu_i \dom(f^*), && i\in[n]              \\
&\;     v_i\ge 0, \mu_i\ge 0, &&  i\in[n]     \\
&\;      \vx\in\gX, &&
    \end{alignedat}
\end{align*}
as claimed.
\end{proof}

%%%%%%%%%%%%%%%%%%%
\paragraph{Example: Conditional Value at Risk (CVaR) with hinge loss.} 

As a specific example of an application of Theorem~\ref{thm:f}, we consider CVaR at level $\alpha \in (0, 1),$ which leads to the optimization problem:
\begin{equation}
    \min_{\vx \in \sR^d} \sup_{\substack{\vp \geq 0, \vone_n^T\vp = 1\\
    p_i \leq \frac{1}{\alpha n} (i \in [d])}} \sum_{i=1}^n p_i g(b_i \va_i^T\vx),
\end{equation}
where $g(u_i) = \max \{0, 1 - u_i \}$ is the hinge loss. 
Here the ambiguity set constraint reduces to  simple bounds $ p_i \leq \frac{1}{\alpha n}$ for $i \in [n]$, so the reformulation based on the convex perspective function can be avoided altogether and replaced by simple Lagrange multipliers for this linear constraint. In particular, in the proof of Theorem~\ref{thm:f}, we can write
\begin{align*}
    \sup_{\vp \in \gP} \vp^T \vw &= \sup_{\vp\in\sR^n} \inf_{\vv\ge 0,\gamma\in\sR, \vmu\ge 0}\bigg( \vp^T\vw  +\vp^T\vv - \gamma\Big(\sum_{i=1}^n p_i - 1\Big)    - \sum_{i=1}^n\mu_i\Big(p_i - \frac{1}{\alpha n}\Big) \bigg)\\
    &= \sup_{\vp\in\sR^n} \inf_{\vv\ge 0,\gamma\in\sR, \vmu\ge 0}\bigg( \gamma + \vp^T(\vw  + \vv - \gamma \vone_n - \vmu )  + \frac{1}{\alpha n}\vmu^T \vone_n\Big) \bigg)\\
    &= \inf_{\vv\ge 0,\gamma\in\sR, \vmu\ge 0} \sup_{\vp\in\sR^n} \bigg( \gamma + \vp^T(\vw  + \vv - \gamma \vone_n - \vmu )  + \frac{1}{\alpha n}\vmu^T \vone_n\Big) \bigg). 
\end{align*}
Following the argument from the proof of Theorem~\ref{thm:f}, and expressing $w_i \ge g(u_i)$ equivalently as the pair of constraints $w_i \ge 0$ and $w_i \ge 1-u_i$, the problem reduces to
\begin{equation}\label{eq:cvar-hinge}
    \begin{aligned}
        \min_{\vx, \vu, \vv, \vw, \vmu, \gamma}\; &\; \Big\{\gamma + \frac{1}{\alpha n}\vmu^T \vone_n \Big\}\\
        \st \; &\; \vw  + \vv - \gamma \vone_n - \vmu = \vzero_n, &&\\
        &\; u_i = b_i \va_i^T \vx, && i \in [n],\\
        &\; w_i \geq 0, \; w_i \geq 1 - u_i, && i \in [n],\\
        &\; v_i \geq 0, \; \mu_i \geq 0,  && i \in [n], 
    \end{aligned}
\end{equation}
which is a linear program. 
To write it in the standard form, we further introduce slack variables $\vs \in \sR^n$, $\vs \geq \vzero,$ to replace inequality constraints $w_i \geq 1 - u_i$ by $s_i - u_i - w_i = -1 $. For implementation purposes, we define $\gX$ to be the set of simple non-negativity constraints ($w_i \geq 0, s_i \geq 0,$, $v_i \geq 0, \mu_i, \geq 0,$ $\forall i \in [n]$) from Eq.~\eqref{eq:cvar-hinge}. The problem then becomes
\begin{equation}\notag%\label{eq:cvar-hinge}
    \begin{aligned}
        \min_{\vx, \vu, \vv, \vw, \vmu, \vs, \gamma}\; &\; \Big\{\gamma + \frac{1}{\alpha n}\vmu^T \vone_n \Big\}\\
        \st \; &\; \vw  + \vv - \gamma \vone_n - \vmu = \vzero_n, &&\\
        &\; u_i - b_i \va_i^T \vx = 0, && i \in [n],\\
        &\;  \vs - \vu - \vw = -\vone_n,\\
%         &\; w_i \geq 0, v_i \geq 0, \mu_i, \geq 0, s_i \geq 0  && i \in [n]. 
& \vw \ge \vzero_n, \; \vv \ge \vzero_n, \; \vmu \ge \vzero_n, \; \vs \ge \vzero_n.
    \end{aligned}
\end{equation}

\subsection{DRO with Wasserstein metric-based ambiguity set}\label{app:dro-wass}
\begin{definition}\label{def:wass}
Let $\vmu$ and $\vnu$ be two probability distributions supported on $\Theta = \sR^d\times\{1, -1\}$ and let $\Pi(\vmu, \vnu)$ denote the set of all joint distributions between $\vmu$ and $\vnu$. Then the Wasserstein metric between $\vmu$  and $\vnu$ is defined by
\begin{align} 
W(\vmu, \vnu) = \inf_{\pi\in\Pi(\vmu, \vnu)}\int_{\Theta\times\Theta}\zeta(\vxi, \vxi') \pi(d \vxi, d\vxi'), 
\end{align}
where $\vxi, \vxi'\in\Theta$ and  $\zeta(\cdot, \cdot):\Theta\times \Theta\rightarrow\sR_+$ is a convex cost function defined by 
\begin{align}\notag
\zeta((\va, b), (\va', b')) = \|\va - \va'\| + \kappa |b - b'|,    
\end{align}
where $\|\cdot\|$ denotes a general norm and $\kappa>0$ is used to balance the feature mismatch and label uncertainty.
Let $\sQ = \frac{1}{n}\sum_{i=1}^n \delta_{(\va_i, b_i)}$, where $\delta_{(\va_i, b_i)}$ is the Dirac Delta function (or a point mass) at point $(\va_i, b_i).$ Then, the Wasserstein metric based ambiguity set is defined as
\begin{align}
\gP_{\rho, \kappa} = \Big\{\sP: W(\sP, \sQ) \le \rho\Big\}.    \label{eq:wass-ambi}
\end{align}
\end{definition}

\begin{assumption}\label{assumption-dro-wasserstein-boundedness}
$\sR \ni M = \sup_{\theta \in \dom(g^*)}|\theta|.$ 
\end{assumption}

We first show the following auxiliary lemma which is used in the proof of Theorem~\ref{thm:dro-wass}. A similar result for the special case of a logistic loss function can be found in \cite{shafieezadeh2015distributionally}. 
\begin{lemma}\label{lem:aux}
Let $(\va', b')\in\sR^d\times\{1,-1\}$ be a given pair of data sample and label. Then, for every $\lambda>0$, we have %\jd{Check whether we have added the assumption that $g$ is $L$-Lipschitz and where $L$ in this lemma comes from}
\begin{equation}
\sup_{\va\in\sR^d}g(b'\va^T\vw) - \lambda\|\va - \va'\| =   
\begin{cases}
g(b'\va'^T\vw), & \text{ if }\;  \|\vw\|_*\le \lambda/M \\ 
+\infty, &              \text{ otherwise }
\end{cases}.  
\end{equation}
\end{lemma}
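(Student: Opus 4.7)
The plan is to unfold $g$ through its biconjugate, swap the order of two suprema (both are suprema, so this is free), and reduce the inner problem to a classical Fenchel computation involving the dual norm. Specifically, since $g$ is convex and lower semicontinuous (implicit from the DRO setup), $g(t)=g^{**}(t)=\sup_{\theta\in\dom(g^*)}\{\theta t-g^*(\theta)\}$. Substituting $t=b'\va^T\vw$ and interchanging the two suprema gives
\begin{align*}
\sup_{\va\in\sR^d}\{g(b'\va^T\vw)-\lambda\|\va-\va'\|\}
&=\sup_{\theta\in\dom(g^*)}\Big\{-g^*(\theta)+\sup_{\va\in\sR^d}\{\theta b'\va^T\vw-\lambda\|\va-\va'\|\}\Big\}.
\end{align*}

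Next, I would substitute $\vd=\va-\va'$ in the inner supremum to get $\theta b'\va'^T\vw+\sup_{\vd}\{(\theta b'\vw)^T\vd-\lambda\|\vd\|\}$. The residual supremum is the Fenchel conjugate of $\lambda\|\cdot\|$ evaluated at $\theta b'\vw$, which by the definition of the dual norm equals $0$ if $\|\theta b'\vw\|_*=|\theta|\|\vw\|_*\le\lambda$ and $+\infty$ otherwise (here I used $|b'|=1$). So after the inner maximization the expression becomes
\begin{align*}
\sup_{\theta\in\dom(g^*)}\Big\{\theta b'\va'^T\vw-g^*(\theta)+\iota_{\{|\theta|\|\vw\|_*\le\lambda\}}\Big\},
\end{align*}
where $\iota$ denotes the $\{0,+\infty\}$-valued indicator.

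Now the two cases fall out cleanly. If $\|\vw\|_*\le\lambda/M$, then for every $\theta\in\dom(g^*)$ we have $|\theta|\le M$ so $|\theta|\|\vw\|_*\le\lambda$; the indicator is inactive and the outer supremum collapses back to $g^{**}(b'\va'^T\vw)=g(b'\va'^T\vw)$. Conversely, if $\|\vw\|_*>\lambda/M$, then $\lambda/\|\vw\|_*<M=\sup_{\theta\in\dom(g^*)}|\theta|$, so by the definition of supremum there exists $\theta_0\in\dom(g^*)$ with $|\theta_0|\|\vw\|_*>\lambda$; for this $\theta_0$ the inner supremum is $+\infty$ while $g^*(\theta_0)<+\infty$ (by definition of $\dom(g^*)$), driving the outer supremum to $+\infty$.

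The only subtle point, and the one I would take most care with, is the second case when the supremum defining $M$ is not attained: I need to ensure I can choose $\theta_0$ with both $\theta_0\in\dom(g^*)$ \emph{and} $|\theta_0|\|\vw\|_*>\lambda$ strictly, which is exactly what the strict inequality $\|\vw\|_*>\lambda/M$ buys via the definition of supremum. The exchange of suprema and the Fenchel identity for $\lambda\|\cdot\|$ are standard and do not require further justification beyond citing the definitions of the dual norm and the biconjugate theorem.
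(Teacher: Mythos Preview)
Your proof is correct and follows essentially the same route as the paper: write $g$ via its biconjugate, swap the two suprema, change variables $\vd=\va-\va'$, identify the inner supremum as the conjugate of $\lambda\|\cdot\|$ (the indicator of the dual-norm ball), and split into cases. Your treatment of the second case is in fact slightly more careful than the paper's, since you explicitly handle the possibility that the supremum defining $M$ is not attained.
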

\begin{proof}
Since $g$ is assumed to be proper, convex, and lower semicontinuous, by the Fenchel-Moreau theorem, it is equal to its biconjugate. Applying this property, we have %By using the convex conjugate of $g$ and $\|\cdot\|$ sequentially, it follows that 
\begin{align*}
&\;\;\;\sup_{\va\in\sR^d} \quad\; \{g(b'\va^T\vw) - \lambda\|\va - \va'\|    \}       \\ 
 =\;&\sup_{\substack{\va\in\sR^d, \\ \theta\in\dom(g^*)}}\big\{ \theta b'\va^T\vw - g^*(\theta)  - \lambda\|\va- \va'\|    \big\} \\
    =\; & \sup_{\substack{\va\in\sR^d, \\ \theta\in\dom(g^*)}} \big\{ \theta b'(\va - \va')^T\vw + \theta b'(\va')^T\vw- g^*(\theta)  - \lambda\|\va- \va'\|.    \big\} 
\end{align*}
Applying the change of variable $\vv:= \va - \va',$ we further have
\begin{align*}
&\quad \sup_{\va\in\sR^d} \quad\{g(b'\va^T\vw) - \lambda\|\va - \va'\|  \}         \\ 
=\; & \sup_{\substack{\vv\in\sR^d, \\ \theta\in\dom(g^*)}} \big\{ \theta b'\vv^T\vw + \theta b'(\va')^T\vw- g^*(\theta)  - \lambda\|\vv\|    \big\}\\
=\; & \sup_{\theta\in\dom(g^*)} \big\{\mathds{1}_{\{\|\theta b'\vw\|_*\leq \lambda\}} + \theta b'(\va')^T\vw- g^*(\theta) \big\}\\
 =\; &
\begin{cases}
 g(b'(\va')^T\vw), & \text{ if } \sup_{\theta \in \dom(g^*)}\|\theta b'\vw\|_* \le \lambda \\ 
 +\infty,          & \text{ otherwise } %\sup_{\theta \in \dom(g^*)}\|\theta b'\vw\|_* > \lambda \\ 
 \end{cases}, 
\end{align*}
where the second equality is by the convex conjugate of a norm $\|\cdot\|$ being equal to the convex indicator of the unit ball w.r.t.~the dual norm $\|\cdot\|_*.$ Finally, it remains to use that, by Assumption~\ref{assumption-dro-wasserstein-boundedness}, $\sup_{\theta \in \dom(g^*)}|\theta| = M.$ 
\end{proof}

Then following~\cite[Theorem 1]{shafieezadeh2015distributionally},  we provide reformulations of the problem from Eq.~\eqref{eq:wass} that can be addressed by computationally efficient solvers. %in Theorem~\ref{thm:dro-wass}. 
\begin{restatable}{theorem}{propwasserstein}\label{thm:dro-wass}
The optimization problem from Eq.~\eqref{eq:wass} is equivalent to:
\begin{equation}\label{eq:dro-wass}
\begin{alignedat}{2}
\min_{\vw, \lambda, \vu, \vv, \vs, \vt} &\;\; \rho\lambda + \frac{1}{n}\sum_{i=1}^n s_i && \\%\label{eq:obj} \\
\st \;\;&  u_i = b_i \va_i^T \vw,  \quad\quad && i\in[n],  \\
    \;\;&  v_i = -u_i,    && i\in[n],  \\
    \;\;& t_i = 2\kappa \lambda + s_i, &&  i\in[n],  \\ 
    \;\;& g(u_i) \le s_i, && i\in[n], \\
    \;\;& g(v_i) \le t_i, && i\in[n], \\
        & \|\vw\|_* \le \lambda/M. 
\end{alignedat}
\end{equation}
\end{restatable}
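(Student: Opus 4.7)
My plan is to derive the reformulation directly from the strong Wasserstein duality result of \citet[Theorem~1]{shafieezadeh2015distributionally}, evaluating the inner supremum explicitly via convex conjugacy and then recasting the resulting max-expression in epigraphical form to obtain the stated (GLP)-type problem.

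First, I would start from the Kantorovich-type strong duality for the Wasserstein ball $\gP_{\rho,\kappa}$ around the empirical distribution $\sQ = \frac{1}{n}\sum_{i=1}^n \delta_{(\va_i,b_i)}$. With cost $\zeta((\va,b),(\va',b')) = \|\va-\va'\| + \kappa|b-b'|$, this yields
\begin{equation*}
\sup_{\sP\in\gP_{\rho,\kappa}} \E^{\sP}[g(b\va^T\vw)]
= \inf_{\lambda\ge 0} \Big\{ \rho\lambda + \tfrac{1}{n}\sum_{i=1}^n \sup_{(\va',b')\in\sR^d\times\{\pm 1\}}\big[ g(b'\va'^T\vw) - \lambda\|\va_i - \va'\| - \lambda\kappa|b_i - b'|\big]\Big\}.
\end{equation*}
Mild regularity on $g$ (proper, convex, lower semicontinuous) ensures no duality gap; this is exactly the setup of \citet{shafieezadeh2015distributionally} with a generic convex loss rather than a specific logistic/hinge instance.

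Next, I would evaluate the inner supremum in closed form. For fixed $b'$, writing $\vd = \va'-\va_i$ and using $g(y) = \sup_{\theta\in\dom(g^*)}\{\theta y - g^*(\theta)\}$,
\begin{equation*}
\sup_{\va'}\{g(b'\va'^T\vw) - \lambda\|\va_i - \va'\|\}
= \sup_{\theta\in\dom(g^*)}\Big\{\theta b'\va_i^T\vw - g^*(\theta) + \sup_{\vd}\big[\theta b'\vd^T\vw - \lambda\|\vd\|\big]\Big\}.
\end{equation*}
By the definition of the dual norm, the innermost supremum is $0$ if $|\theta|\,\|\vw\|_* \le \lambda$ and $+\infty$ otherwise, so finiteness for every $\theta\in\dom(g^*)$ forces the global constraint $M\|\vw\|_* \le \lambda$, i.e., $\|\vw\|_*\le\lambda/M$. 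Under this constraint the inner sup collapses to $g(b'\va_i^T\vw)$. Since $b'\in\{\pm 1\}$ and $|b_i-b'|\in\{0,2\}$, the outer sup over $b'$ equals $\max\{g(b_i\va_i^T\vw),\; g(-b_i\va_i^T\vw) - 2\kappa\lambda\}$. Setting $u_i := b_i\va_i^T\vw$ and $v_i := -u_i$, the dual problem becomes
\begin{equation*}
\min_{\vw,\lambda\ge 0} \Big\{\rho\lambda + \tfrac{1}{n}\sum_{i=1}^n \max\{g(u_i),\, g(v_i) - 2\kappa\lambda\}\Big\} \quad \text{s.t.}\quad \|\vw\|_*\le \lambda/M,\ u_i = b_i\va_i^T\vw,\ v_i=-u_i.
\end{equation*}

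Finally, I would perform the standard epigraphical lift to move the $\max$ into constraints. Introduce $s_i\ge \max\{g(u_i), g(v_i)-2\kappa\lambda\}$, which is equivalent to $g(u_i)\le s_i$ together with $g(v_i)\le s_i + 2\kappa\lambda$. Naming the right-hand side of the latter as an explicit variable $t_i := 2\kappa\lambda + s_i$ (an affine equality constraint), and noting that at optimality $s_i$ takes its smallest admissible value so the reformulation is tight, yields exactly the system \eqref{eq:dro-wass}. Nonnegativity of $\lambda$ follows automatically from $\|\vw\|_*\le \lambda/M$ and $M>0$.

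The main technical point, and the step I would expect to require the most care, is the dual-norm/conjugacy argument that simultaneously extracts the constraint $\|\vw\|_*\le\lambda/M$ from the (possibly unbounded) inner sup over $\va'$; everything else is an epigraphical rewriting. Subtleties such as $\dom(g^*)$ being unbounded (so $M=\infty$, collapsing the feasible set to $\vw=\vzero$) or $g$ not being everywhere finite are the typical boundary cases to address, but for the convex losses targeted by our (GLP) framework (hinge, logistic, smooth surrogates) the argument above goes through directly.
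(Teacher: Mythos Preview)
Your proposal is correct and follows essentially the same approach as the paper: both invoke the Wasserstein strong duality of \citet{shafieezadeh2015distributionally} (the paper rederives it via \citet{shapiro2001duality} whereas you cite it directly), evaluate the inner supremum over $\va'$ via convex conjugacy to extract the constraint $\|\vw\|_*\le\lambda/M$ (your inlined argument is exactly the paper's auxiliary Lemma~\ref{lem:aux}), split over $b'\in\{\pm1\}$, and then perform the epigraphical lift with auxiliary variables $u_i,v_i,s_i,t_i$. The only differences are presentational.
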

%

% \propwasserstein*
\begin{proof} 
Let $\vz=(\va, b) \in \Theta := \sR^d\times\{1,-1\}$ and let $h_{\vw}(\vz) := g(b\va^T\vw).$ Then by the definition of the Wasserstein metric, 
\begin{align}
\sup_{\sP\in\gP_{\rho,\kappa}}\E^{\sP}[g(b\va^T\vw)] = 
\begin{cases}
\sup_{\pi\in\Pi(\sP, \hat{\sP}_n)}&\int_{\Theta}h_{\vw}(\vz)\pi(\dd\vz, \Theta) \\ 
\st & \int_{\Theta\times\Theta}\zeta(\vz, \vz')\pi(\dd\vz, \dd\vz')\le \rho. 
\end{cases}\label{eq:wass-1}
\end{align}
Assume that the conditional distribution of $\vz$ given $\vz' = (\va_i, b_i)$ is $\sP^i$, for all $i\in[n]$. Then, based on the definition of $\sP_n = \frac{1}{n}\sum_{i=1}^n \delta_{(\va_i, b_i)}$, we have 
\begin{align}
\pi(\dd\vz, \dd\vz') = \frac{1}{n}\sum_{i=1}^n \delta_{(\va_i, b_i)} \sP^i(\dd\vz). 
\end{align}
As a result, the problem from Eq.~\eqref{eq:wass-1} is equivalent to 
\begin{align}
\sup_{\sP\in\gP_{\rho,\kappa}}\E^{\sP}[g(b\va^T\vw)] =  
\begin{cases}
\sup_{\sP^i}&\frac{1}{n}\sum_{i=1}^n \int_{\Theta}h_{\vw}(\vz)\sP^i(\dd\vz) \\ 
\st & \frac{1}{n}\sum_{i=1}^n \int_{\Theta}\zeta(\vz, \vz')\sP^i(\dd\vz)\le \rho \\ 
    &   \int_{\Theta}\sP^i(\dd \vz) = 1. 
\end{cases}\label{eq:wass-2}
\end{align}
Then substituting in $\vz = (\va, b)$, using that the domain of $y$ is $\{1, -1\}$, and decomposing $\sP^i$ into unnormalized measures $\sP_{\pm 1}(\dd \va) = \sP^i(\dd \va, \{b = \pm 1\})$ supported on $\sR^d$, the RHS of Eq.~\eqref{eq:wass-2} can be simplified to  
\begin{align}
\sup_{\sP_{\pm}^i}\;\; & \frac{1}{n}\sum_{i=1}^n\int_{\sR^d} \big(h_{\vw}(\va, 1)\sP_{1}^i(\dd\va) +  h_{\vw}(\va, -1)\sP_{-1}^i(\dd\va)\big) \nonumber\\
\st\;\; & \frac{1}{n}\sum_{i=1}^n\int_{\sR^d}\big(\zeta((\va, 1), (\va_i, \vb_i)) \sP_{1}^i(\dd\va) + \zeta((\va, -1), (\va_i, \vb_i)) \sP_{-1}^i(\dd\va)\big)\le\rho\nonumber\\ 
    & \int_{\sR^d} \big(\sP_{1}^i(\dd\va) +   \sP_{-1}^i(\dd\va)\big)  = 1. \notag 
\end{align}
With the definition of the cost function $\zeta((\va, b), (\va', b')) = \|\va - \va'\| + \kappa|b - b'|$, it follows that 
\begin{align}
&\frac{1}{n}\sum_{i=1}^n\int_{\sR^d}\zeta((\va, 1), (\va_i, \vb_i)) \sP_{1}^i(\dd\va) + \zeta((\va, -1), (\va_i, \vb_i)) \sP_{-1}^i(\dd\va)     \nonumber\\ 
=\;& \frac{1}{n}\int_{\sR^d}\sum_{b_i = 1}[ \|\va - \va_i\|\sP^i_{1}(\dd\va) + \|\va - \va_i\|\sP^i_{-1}(\dd\va) + 2\kappa\sP_{-1}^i(\dd\va)]  \nonumber\\ 
& +\frac{1}{n}\int_{\sR^d}\sum_{b_i = -1}[ \|\va - \va_i\|\sP^i_{-1}(\dd\va) + \|\va - \va_i\|\sP^i_{1}(\dd\va) + 2\kappa\sP_{1}^i(\dd\va)]  \nonumber\\ 
=\;& \frac{2\kappa}{n}\int_{\sR^d}\Big(\sum_{b_i = 1}\sP_{-1}^i(\dd\va) + \sum_{b_i = -1}\sP_{1}^i(\dd\va)\Big) +\frac{1}{n}\int_{\sR^d}\sum_{i=1}^n\|\va- \va_i\|(\sP^i_{-1}(\dd\va) +\sP^i_{1}(\dd\va)).     
\end{align}
Thus, we have 
%% \begin{align}
%% \sup_{\sP\in\gP_{\rho,\kappa}}\E^{\sP}[g(b\va^T\vw)]  %\nonumber \\
%% =\;&\begin{cases}
%%  \sup_{\sP^i_{\pm 1}}&\frac{1}{n}\sum_{i=1}^n\int_{\sR^d} \big(h_{\vw}(\va, 1)\sP_{1}^i(\dd\va) +  h_{\vw}(\va, -1)\sP_{-1}^i(\dd\va)\big) \nonumber\\ 
%% \st & \frac{2\kappa}{n}\int_{\sR^d}\big(\sum_{b_i = 1}\sP_{-1}^i(\dd\va) + \sum_{b_i = -1}\sP_{1}^i(\dd\va)\big)\\
%% &\hspace{.5in}+\frac{1}{n}\int_{\sR^d}\sum_{i=1}^n\|\va- \va_i\|(\sP^i_{-1}(\dd\va) +\sP^i_{1}(\dd\va)) \le \rho  \nonumber\\ 
%% & \int_{\sR^d} \big(\sP_{1}^i(\dd\va) +   \sP_{-1}^i(\dd\va)\big)  = 1.  
%% \end{cases}
%% \end{align}
\[
\sup_{\sP\in\gP_{\rho,\kappa}}\E^{\sP}[g(b\va^T\vw)] 
= \left\{
\begin{aligned}
 \sup_{\sP^i_{\pm 1}}\, &\frac{1}{n}\sum_{i=1}^n\int_{\sR^d} \big(h_{\vw}(\va, 1)\sP_{1}^i(\dd\va) +  h_{\vw}(\va, -1)\sP_{-1}^i(\dd\va)\big) \\ 
 \st \quad & \frac{2\kappa}{n}\int_{\sR^d}\left(\sum_{b_i = 1}\sP_{-1}^i(\dd\va) + \sum_{b_i = -1}\sP_{1}^i(\dd\va)\right)\\
 &\hspace{.5in}+\frac{1}{n}\int_{\sR^d}\sum_{i=1}^n\|\va- \va_i\|(\sP^i_{-1}(\dd\va) +\sP^i_{1}(\dd\va)) \le \rho  \nonumber\\ 
 & \int_{\sR^d} \big(\sP_{1}^i(\dd\va) +   \sP_{-1}^i(\dd\va)\big)  = 1.  
\end{aligned}
\right.
\]
The above problem w.r.t. $\sP^i_{\pm 1}$ is an infinite-dimensional linear program with a finite number of constraints. By \cite[Proposition~3.4]{shapiro2001duality}, we get the following equivalent dual formulation: 
% \cb{check whether $\lambda$ is used in other places or not. }
%% \begin{align}\notag
%% \sup_{\sP\in\gP_{\rho,\kappa}}\E^{\sP}[g(b\va^T\vw)] =  
%% \begin{cases}
%% \min_{\lambda, s_i}&  \rho    \lambda+ \frac{1}{n}\sum_{i=1}^n s_i \\ 
%% \st& \sup_{\va\in\sR^d} h_{\vw}(\va, 1) - \lambda \|\va - \va_i\| - \lambda \kappa(1-b_i) \le s_i,\;  \hspace{2.5mm}\forall i\in[n] \\
%% &\sup_{\va\in\sR^d} h_{\vw}(\va, -1) - \lambda \|\va - \va_i\| - \lambda \kappa(1+b_i) \le s_i, \; \forall i\in[n] \\
%% &\lambda \ge 0. 
%% \end{cases}
%% \end{align}
\[
\sup_{\sP\in\gP_{\rho,\kappa}}\E^{\sP}[g(b\va^T\vw)] =  
\left\{
\begin{alignedat}{2}
\min_{\lambda, s_i} \,&  \rho    \lambda+ \frac{1}{n}\sum_{i=1}^n s_i &&\\ 
\st\, & \sup_{\va\in\sR^d} h_{\vw}(\va, 1) - \lambda \|\va - \va_i\| - \lambda \kappa(1-b_i) \le s_i,\;\; && i\in[n] \\
&\sup_{\va\in\sR^d} h_{\vw}(\va, -1) - \lambda \|\va - \va_i\| - \lambda \kappa(1+b_i) \le s_i, \;\; && i\in[n] \\
&\lambda \ge 0. &&
\end{alignedat}
\right.
\]
Then, recalling that $h_{\vw}(\va, \pm 1)$ is short for $g(\pm\va^T\vw)$, by Lemma \ref{lem:aux}, we have
\begin{align*}
 \sup_{\va\in\sR^d} h_{\vw}(\va, \pm 1) - \lambda \|\va - \va_i\| = 
 \begin{cases}
 g(\pm \va_i^T\vw),  &  \text{ if } \sup_{\theta\in\dom(g^*)}\|\theta\vw\|_* \le \lambda \\ 
 +\infty,           &       \text{ otherwise. }
 \end{cases}
\end{align*}

Finally, the resulting reformulation is 
\begin{alignat*}{2}
\min_{\vw, \lambda, \vs} \; \rho\lambda + \frac{1}{n}\sum_{i=1}^n s_i &&& \\ 
\st \quad\quad\quad\quad   g(b_i \va_i^T \vw) & \le s_i, && i\in[n], \\
\; g(-b_i \va_i^T \vw) - 2\lambda \kappa & \le s_i, \quad && i\in[n], \\
 \sup_{\theta\in\dom(g^*)} \|\theta\vw\|_* & \le \lambda.  &&
\end{alignat*}
Finally, recalling that, by assumption, $\sup_{\theta \in \dom(g^*)}|\theta| = M,$ it follows that the constraint $\sup_{\theta\in\dom(g^*)} \|\theta\vw\|_* \le \lambda$ is equivalent to $\|\vw\|_* \leq \lambda/M.$ Meanwhile, by introducing $u_i = b_i \va_i^T \vw, v_i = -u_i (i\in[n])$  and $s_i, t_i (i\in[n]$, we obtain Theorem \ref{thm:dro-wass}.
\end{proof}

In Theorem \ref{thm:dro-wass}, when we assume that the conic constraints $g(u) \le s$ and $\|\vw\|_*\le \lambda/M$ in Eq.~\eqref{eq:dro-wass} admit efficient proximal operators, we can formulate this problem as \eqref{eq:pd-glp} and apply \clvr. The resulting complexity bounds are similar to those discussed above for the $f$-divergence formulation. %in Eq.~\eqref{eq:thm:f}.

% \section{Supplementary Experiments}\label{sec:supp-experiments}

% \jd{What is happening with the following proposition?} \cb{Will consider it then. }
% We consider hinge loss, which can be further reformulated as the following conic linear program
% \begin{restatable}{proposition}{propwassersteinhingeloss}\label{prop:dro-wass-hinge}
% The optimization problem \eqref{eq:r-erm-prob} is equivalent to 
% \begin{equation}
% \begin{aligned}
% \min_{\vw, \lambda, \vs, \vu,\vv,\vt} &\;\; \rho\lambda + \frac{1}{n}\sum_{i=1}^n s_i \\
% \st \;\; &  u_i = s_i + 2\kappa\lambda \\
% \;\; &      v_i = s_i+ b_i\va_i^T\vw - 1, \quad i\in[n] \\
% \;\; &     t_i = u_i+s_i-v_i - 2 \\
% \;\; &s_i, u_i,v_i, t_i\ge 0, \\
% \;\; & \|\vw\|_* \le \lambda. 
% \end{aligned}
% \end{equation}
% \end{restatable}

%%%%%%%%%%%%%%%%%%%%%
%%%%%%%%%%%%%%%%%%%%%%

\section{Experiment details}\label{appx:experiments-details}

% \subsection{Comparison between the values of $L$ and $R$}

% As described in Section~\ref{sec:intro}, one of the main advantages of \clvr~over previous methods is that the complexity of \clvr~depends on the max row norm $R$ instead of the spectral norm $L$, which in the worst case for ill-conditioned problems can have a factor of $\sqrt{n}$ improvement. We claimed that in practical problems where the problem instances are highly structured (e.g., reformulated DRO problems), $R$ can be much smaller than $L$ and in Table~\ref{table:L-values} we provide empirical evidence for such claims. In all our experiments, we normalize each rows of $\mA$ to $R = 1$ as stated in Assumption~\ref{ass:R-L} so the values of $L$ demonstrate the theoretical improvements for the experiments described in Section~\ref{sec:num-experiments-discussion}.

% \begin{table*}[t!] 
% \centering
% \begin{threeparttable}[b]
% \begin{small}
% \caption{Values of the spectral norm $L$ in the reformulated DRO problems with Wasserstein metric after each row is normalized to $R=1$.}
% \tabcolsep=0.1cm %\fi
% \begin{tabular}{|c|c|c|c|}
% \hline
%   \text{Reformulated a9a} 	& \text{Reformulated gisette}	& \text{Reformulated rcv1} 	& \text{Reformulated news20}  \\ 
%     $d = 130738, n = 97929$	&  $d = 44002, n = 28000$	& $d = 269914, n = 155198$	& $d = 5500750, n = 2770370$\\ \hline
%   117.3 & 65.9 & 196.4 & 1041.6  \\ \hline
%   \end{tabular}\label{table:L-values}
%   \end{small}
% %\vspace{0.1in}
% \end{threeparttable}
% \end{table*}

\subsection{Comparison of adaptive restart schemes}
\label{subsec-restart_schemes_comparisons}

We provide a brief empirical comparison between our adaptive restart scheme that uses LPMetric and the adaptive restart scheme using the normalized duality gap proposed in~\cite{applegate2021faster}. 
We compared the performance of PDHG on benchmark problem sets \texttt{qap10}, \texttt{qap15}, \texttt{nug08}, and \texttt{nug20} used in~\cite{applegate2021faster}, using the two adaptive restart criteria. 
We ran PDHG until reaching accuracy as described in~\cite{applegate2021faster} (that is, until normalized duality gap is at most $10^{-6}$ and primal and dual infeasibility is at most $10^{-8}$).

\begin{table*}[h!]
\centering
\begin{threeparttable}[b]
\begin{small}
\caption{Number of iterations required for the normalized duality gap and primal and dual infeasibility to fall below $10^{-6}$ and $10^{-8}$, respectively. }
\tabcolsep=0.1cm %\fi
\begin{tabular}{|c|c|c|}
\hline
	Problem Name		&  	\text{Adaptive Normalized Duality Gap}	& \text{Adaptive LPMetric}  \\ \hline
	\texttt{qap10} & \bf{13041} & 14521 \\
	\texttt{qap15} & 12561 & \bf{961} \\ 
	\texttt{nug08} & \bf{841} & 1481  \\ 
	\texttt{nug20} & 22001 & \bf{16281} \\ \hline
\end{tabular} \label{table:restart}
\end{small}
\end{threeparttable}
\end{table*}

Table~\ref{table:restart} shows that the two restart criteria give similar performance in terms of iteration complexity.
Normalized gap is better on \texttt{qap10} and \texttt{nug08}, while LPMetric is better on \texttt{qap15} and \texttt{nug20}. 
For further details, Figure~\ref{fig:plots-restart-iter} plots the normalized duality gap vs iteration count.  
The two adaptive restart schemes lead to similar performance of PDHG over iterations. 
Comparisons based on wall-clock time are shown in Figure~\ref{fig:plots-restart-time}; the behavior is similar. 
We conclude that our restart criterion based on LPMetric seems comparable with normalized duality gap, in terms of iteration complexity.

\begin{figure*}[ht!]
\captionsetup[subfigure]{labelformat=empty}
    \centering
    \subfloat{{\includegraphics[width=0.45\textwidth]{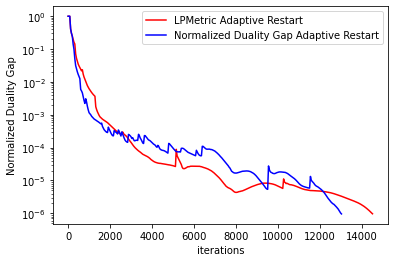} }\label{fig:qap10-iter}}%
    \hspace{\fill}
    \subfloat{{\includegraphics[width=0.45\textwidth]{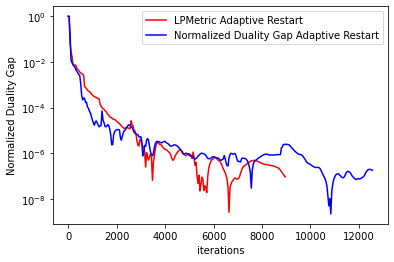} }\label{fig:qap15-iter}}%
    
    \subfloat{\includegraphics[width=0.45\textwidth]{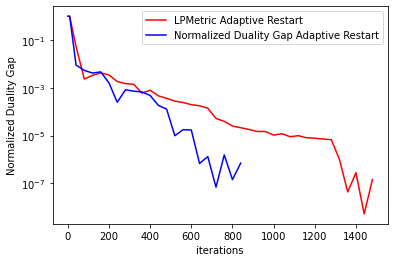}\label{fig:nug08-iter}}
    \hspace*{\fill}
    \subfloat{{\includegraphics[width=0.45\textwidth]{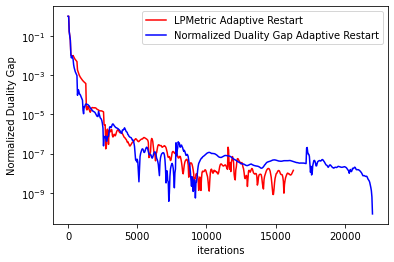}}\label{fig:nug20-iter}}
    \caption{Comparisons of restart schemes that use LPMetric and that use the normalized duality gap against number of iterations. The plots from left to right and then top to bottom are for \texttt{qap10}, \texttt{qap15}, \texttt{nug08}, and \texttt{nug20}.}
    \label{fig:plots-restart-iter}
\end{figure*}

\begin{figure*}[ht!]
\captionsetup[subfigure]{labelformat=empty}
    \centering
    \subfloat{{\includegraphics[width=0.45\textwidth]{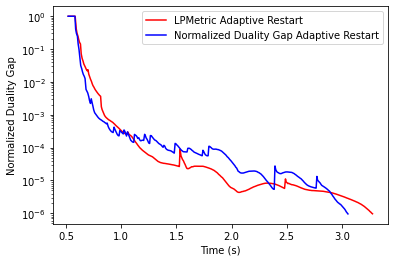} }\label{fig:qap10-time}}%
    \hspace{\fill}
    \subfloat{{\includegraphics[width=0.45\textwidth]{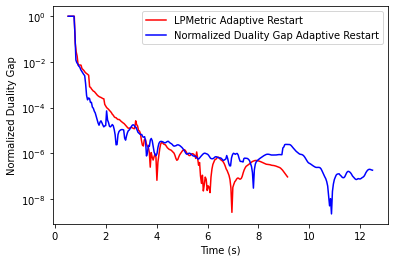} }\label{fig:qap15-time}}%
    
    \subfloat{\includegraphics[width=0.45\textwidth]{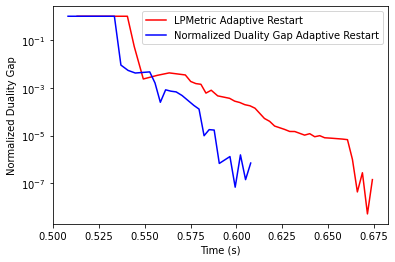}\label{fig:nug08-time}}
    \hspace*{\fill}
    \subfloat{{\includegraphics[width=0.45\textwidth]{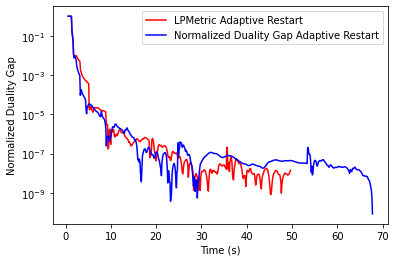}}\label{fig:nug20-time}}
    \caption{Comparisons of restart schemes that use LPMetric and that use the normalized duality gap against wall clock time in terms of seconds. The plots from left to right and then top to bottom are for \texttt{qap10}, \texttt{qap15}, \texttt{nug08}, and \texttt{nug20}.}
    \label{fig:plots-restart-time}
\end{figure*}

% \begin{table*}[t!] 
% \centering
% \begin{threeparttable}[b]
% \begin{small}
% \new{
% \caption{\new{Values of the spectral norm $L$ in the reformulated DRO problems with Wasserstein metric after each row is normalized to $R=1$.}}
% \tabcolsep=0.1cm %\fi
% \begin{tabular}{|c|c|c|c|}
% \hline
%   \text{Reformulated a9a} 	& \text{Reformulated gisette}	& \text{Reformulated rcv1} 	& \text{Reformulated news20}  \\ 
%     $d = 130738, n = 97929$	&  $d = 44002, n = 28000$	& $d = 269914, n = 155198$	& $d = 5500750, n = 2770370$\\ \hline
%   117.3 & 65.9 & 196.4 & 1041.6  \\ \hline
%   \end{tabular}\label{table:L-values}}
%   \end{small}
% %\vspace{0.1in}
% \end{threeparttable}
% \end{table*}

% \begin{table*}

% {c|c|c|c|c}
%       			& \texttt{qap10} 		& \texttt{qap15} 		& \texttt{nug08} 		& \texttt{nug20} \\ \hline
%   \text{adaptive normalizedduality gap} & 13041  &	12561	& 841	& 22001			\\ \hline
%   \text{adaptive LPMetric} 				& 14521  &	8961	& 1481 &	16281	\\	
% \end{table*}

\subsection{Details of experiments in Section~\ref{sec:num-experiments-discussion}}

When we consider the DRO problem with Wasserstein metric of $\ell_1$-norm and with hinge loss, we observe that the reformulation described in Theorem~\ref{thm:dro-wass} can be further reformulated into an ordinary LP, as the dual norm of $\ell_1$ norm is $\ell_{\infty}$ norm and hinge loss can be decomposed linearly with additional auxiliary variables. Thus in the following instances we consider, we apply our adaptive restart scheme with respect to LPMetric as illustrated in Section~\ref{sec:restart} to achieve heuristic linear convergence rate in terms of the number of data passes.
% For the  DRO with Wasserstein metric of $\ell_1$-norm, since the dual norm  of the $\ell_1$ norm is the $\ell_{\infty}$ norm, the reformulation for this problem in Theorem~\ref{thm:dro-wass} is an LP in standard form. 
% For this formulation, 
We compare our \clvr~method with three representative methods: \textsc{pdhg}~\cite{chambolle2011first}, \textsc{spdhg}~\cite{chambolle2018stochastic} and \textsc{pure-cd}~\cite{alacaoglu2020random}.
% For all  algorithms we use LPMetric~\eqref{eq:lpmetric} as the performance measure and use a restart strategy of successive halving of LPMetric (Section~\ref{sec:restart}) to obtain linear convergence. 
We implemented \textsc{clvr} and other algorithms in \href{https://julialang.org}{Julia}, optimizing all implementations to the best of our ability.\footnote{Julia is particularly designed for high performance numerical computation.} 
For \textsc{spdhg}, whose per-iteration cost is at least $O(d)$, we consider a large batch size of $50$ to  balance the effect of the $O(d)$ cost and improve the overall efficiency. 
Meanwhile, \textsc{pure-cd} with block size $1$ is already well suited to sparsity. 
For \clvr, we experiment with block sizes $1$ and $10$. 

We conducted our experiments on \href{https://www.csie.ntu.edu.tw/~cjlin/libsvmtools/datasets/binary.html}{LibSVM}~\cite{chang2011libsvm} datasets \texttt{a9a},  \texttt{gisette}, and  \texttt{rcv1.binary}, each with different sparsity levels.
We run each algorithm using one CPU core, on a Linux machine with a second generation Intel Xeon Scalable Processor (Cascade Lake-SP) with $128$~GB of RAM. 
Because the weight parameter $\gamma$  between primal and dual variables (see Theorem~\ref{thm:clvr}) strongly influences empirical performance, we tune it for all datasets by trying the values $\{10^{-i}\}$ for $i\in\sZ$, for each of the methods. 
We set the Lipschitz constant  of \textsc{pdhg} to be the largest singular value of the constraint matrix in the LP formulation.
For \textsc{pure-cd} and \clvr~with block size $1$, because the rows of the matrix are normalized,  we set the Lipschitz constant to  $1$.
For \clvr~with block size $10$ and \textsc{spdhg} with block size $50$, the Lipschitz constants are tuned to $3$ and $9$, respectively.

\begin{remark}[Comparisons of using different block sizes]

We conducted experiments to compare the practical performance of \clvr~against different choices of block sizes, with results shown in Figure~\ref{fig:blocksize}. 
We ran the DRO with Wasserstein metric using the same setup as described in Section~\ref{sec:num-experiments-discussion}, on the \texttt{rcv1} dataset and using an early stopping criterion of \texttt{LPMetric} at $10^{-1}$.
In the plot, we can see that \clvr~converges to an approximate solution fastest when the block size is set to $10$, providing support for our choice of $10$ in Section~\ref{sec:num-experiments-discussion}. 
As illustrated in Figure~\ref{fig:plots}, \clvr~is most efficient in terms of the number of data passes when the block size is 1, but in terms of the execution time, running \clvr~with larger block size yields better performance. We attribute this phenomenon to the instruction-level parallelism~\cite{hennessy2011computer} in modern processors, allowing more computations to be completed in the same number of clock cycles. 
\end{remark}

\begin{figure*}[ht!]
    \captionsetup[subfigure]{labelformat=empty}
        \centering
        %\vspace{-10pt}
        % \subfloat{{ }\label{fig:a9a-datapass}}%
        % \hspace{\fill}
        % \subfloat{{\includegraphics[width=0.31\textwidth]{plots/gisette-MetricLP_Datapass-01234.png} }\label{fig:gisette-datapass}}%
        % \hspace*{\fill}
        % \subfloat{{\includegraphics[width=0.31\textwidth]{plots/rcv1-MetricLP_Datapass-01234.png} }\label{fig:rcv1-datapass}}%
        \includegraphics[width=0.60\textwidth]{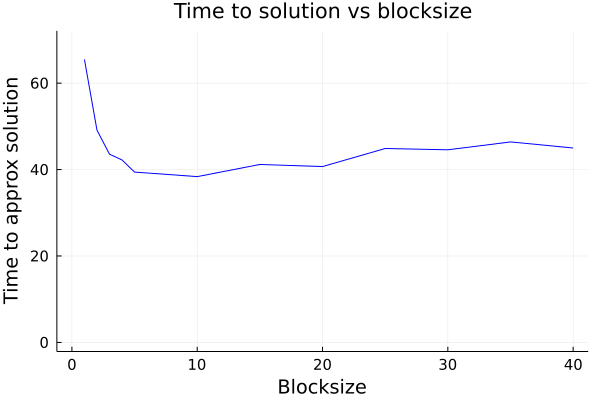}
        \caption{Comparison of the performance of CLVR with various choices of blocksizes.}
        \label{fig:blocksize}
        %\vspace{-3mm}
\end{figure*}

In Table~\ref{tb:dataset}, we list information about the three datasets and the corresponding matrices in the reformulations. 
As we see, due to the sparse connectivity of auxiliary variables, all the matrices in reformulations are quite sparse. As a commonly adopted preprocessing step for LP, we normalize the matrix in the standard-form LP so that each row has Euclidean norm $1$.

\begin{table*}[ht]
\centering
\caption{The dimension and sparsity of the original datasets and the corresponding matrices in reformulations.}
% %\vspace{0.1mm}
\tabcolsep=0.1cm %\fi
\begin{tabular}{|c|c|c||c|c|}
\hline 
Dataset	& \text{Original}\; $(d, n)$ 		& \#\text{nonzeros} / $(d \times n)$ 		& \text{Reformulated}\; $(d, n)$ 		& \#\text{nonzeros} / $(d \times n)$ \\ \hline
  \texttt{a9a} 	& $(123, 32561)$ 	& $0.11$ 	& $(130738, 97929)$	 &	$9.6\times10^{-5} $ \\ 
  \hline
  \texttt{gisette} 	& $(5000, 6000)$ 	& $0.99$ 	& $(44002, 28000)$ &	 $4.9\times10^{-2}$ \\
  \hline
  \texttt{rcv1} 	& $(47236, 20242)$ 	& $1.5\times10^{-3}$ 	& $(269914, 155198)$ &	 $8.8\times10^{-5}$ \\
  \hline
\end{tabular} \label{tb:dataset}
% \begin{tablenotes}
% \item[1] $\tau^2$ is an absolute constant that denotes the variance of stochastic gradients.
% \end{tablenotes}
%\vspace{0.1in}
%\cbnote{We may remove the column of per-iteration cost. } \sjw{Good idea as it looks like we need to narrow the table.}
\end{table*}

\begin{remark}[Performance comparison using multiple cores]
We conducted further experiments to examine the effects of allowing the algorithms to run on more computing cores. However, we did not observe any meaningful difference in performances in terms of wall-clock time when we repeated the experiments described above and in Section~\ref{sec:num-experiments-discussion} using $2$ CPU cores per algorithm. Our interpretation of this observation is that because most of the steps within \clvr~and other algorithms we are comparing against are simple and cheap, involving very few large matrix-vector multiplications and no matrix factorization, the practical performance of algorithms becomes memory-bound, hence additional cores do not make much difference.
\end{remark}

\end{document}